\newtheorem{theorem}{Theorem}
\newtheorem{lemma}{Lemma}
\newtheorem{assumption}{Assumption}
\newcommand{\be}{\begin{equation}}
\newcommand{\ee}{\end{equation}}
\newcommand{\bee}{\begin{equation*}}
\newcommand{\eee}{\end{equation*}}
\newcommand{\bea}{\begin{eqnarray}}
\newcommand{\eea}{\end{eqnarray}}
\newcommand{\beaa}{\begin{eqnarray*}}
\newcommand{\eeaa}{\end{eqnarray*}}
\newcommand{\minimize}{\mathop{\textrm{minimize}}}
\newcommand{\maximize}{\mathop{\textrm{maximize}}}
\newcommand{\tcgt}{\tilde{\mathcal{G}}(x_i^t)}
\newcommand{\cgt}{\mathcal{G}(x_i^t)}
\newcommand{\E}{\mathbf{E}}
\newcommand{\R}{\mathbf{R}} 
\newcommand{\Var}{\mathbf{Var}}
\newcommand{\cB}{\mathcal{B}}
\newcommand{\cG}{\mathcal{G}}
\newcommand{\cS}{\mathcal{S}} 
\newcommand{\tnf}{\tilde{\nabla}F}    
\newcommand{\cO}{{\mathcal{O}}}
\newcommand{\prox}{\mathbf{prox}}
\newcommand{\argmin}{\mathop{\rm argmin}} 
\newcommand{\half}{\frac{1}{2}}
\title{A Stochastic Composite Gradient Method with Incremental Variance Reduction}
\author{%
  Junyu Zhang \\
  University of Minnesota \\
  Minneapolis, Minnesota 55455 \\
  \texttt{zhan4393@umn.edu} \\
  \And
  Lin Xiao \\
  Microsoft Research \\
  Redmond, Washington 98052 \\
  \texttt{lin.xiao@microsoft.com} \\
}
\begin{document}

\maketitle

\begin{abstract}
    We consider the problem of minimizing the composition of a smooth 
    (nonconvex) function and a smooth vector mapping, where the inner mapping
    is in the form of an expectation over some random variable or a finite sum. 
    We propose a stochastic composite gradient method that employs an 
    incremental variance-reduced estimator for both the inner vector mapping 
    and its Jacobian. 
    We show that this method achieves the same orders of complexity
    as the best known first-order methods for minimizing expected-value and 
    finite-sum nonconvex functions, despite the additional outer composition 
    which renders the composite gradient estimator biased.
    This finding enables a much broader range of applications in machine 
    learning to benefit from the low complexity of 
    incremental variance-reduction methods.
\end{abstract}

\section{Introduction}
\label{sec:intro}

We consider stochastic composite optimization problems of the form
\be
\label{prob:main}
\minimize_{x\in\R^d} \quad  f\bigl(\E_\xi[g_\xi(x)] \bigr) + r(x) \, ,
\ee
where $f:\R^p\to\R$ is a smooth and possibly nonconvex function, 
$\xi$ is a random variable, 
each $g_{\xi}:\R^d\to\R^p$ is a smooth vector mapping,
and~$r$ is convex and lower-semicontinuous.
A special case we will consider separately is when~$\xi$ is a discrete random 
variable with uniform distribution over $\{1,2,\ldots,n\}$.
In this case the problem is equivalent to a deterministic optimization problem
\be
\label{prob:main-finite}
\minimize_{x\in\R^d} \quad 
f\biggl(\frac{1}{n}\sum_{i=1}^n g_i(x)\biggr) + r(x) \, .
\ee
The formulations in~\eqref{prob:main} and~\eqref{prob:main-finite}
cover a broader range of applications than classical stochastic optimization 
and empirical risk minimization (ERM) problems where each $g_\xi$ is a scalar
function ($p=1$) and~$f$ is the scalar identity map.
A well-known example is policy evaluation in reinforcement 
learning (RL) \citep[e.g.,][]{sutton1998reinforcement}.
With linear value function approximation, it can be formulated as
\[
\minimize_{x\in\R^d} \quad \bigl\|\E[A]x-\E[b] \bigr\|^2,
\]
where $A$ and $b$ are random matrix and vector generated by 
a Markov decision process (MDP) \citep[e.g.,][]{dann2014policy}.
Here we have $f(\cdot)=\|\cdot\|^2$, $\xi=(A, b)$ and $g_\xi(x)=Ax-b$.

Another interesting application is risk-averse optimization
\citep[e.g.,][]{Rockafellar2007CoherentRisk,Ruszczynski2013risk-averse}, 
which has many applications in RL and financial mathematics.
We consider a general formulation of mean-variance trade-off:
\begin{equation}\label{prob:risk-averse}
\maximize_{x\in\R^d}~~ \left\{\, \E\bigl[h_\xi(x)\bigr]-\lambda\Var\bigl(h_\xi(x)\bigr)
~\equiv~ 
\E\bigl[h_\xi(x)\bigr] - \lambda \Bigl(\E\bigl[h_\xi^2(x)\bigr] - \E\bigl[h_\xi(x)\bigr]^2\Bigr) \right\}, 
\end{equation}
where each $h_\xi(x):\R^d\to\R$ is a reward function 
(such as total portfolio return).
The goal of problem~\eqref{prob:risk-averse} is to maximize the average
reward with a penalty on the variance which captures the potential risk.
It can be cast in the form of~\eqref{prob:main} by using the mappings
\begin{equation}\label{eqn:mean-var-mapping}
g_\xi(x):\R^d\to\R^2= \bigl[ h_\xi(x)~~ h^2_\xi(x) \bigr]^T , \qquad
f(y,z):\R^2\to\R = -y + \lambda y^2 - \lambda z \, .
\end{equation}
Here, the intermediate dimension is very low, i.e., $p=2$.
This leads to very little overhead in computation 
compared with stochastic optimization without composition.

Besides these applications, the composition structure 
in~\eqref{prob:main} and~\eqref{prob:main-finite} are of independent 
interest for research on stochastic and randomized algorithms. 
For the ease of notation, we define
\begin{equation}\label{eqn:simple-notation}
g(x) := \E_\xi[g_\xi(x)], \qquad
F(x) := f(g(x)), \qquad 
\Phi(x) := F(x) + r(x).
\end{equation}
In addition, let $f'$ and~$F'$ denote the gradients of~$f$ and~$F$ 
respectively, 
and $g'_\xi(x)\in\R^{p\times d}$ denote the Jacobian matrix of~$g_\xi$ at~$x$. 
Then we have 
\vspace{-1ex}
\[
    F'(x) =  \nabla \Bigl( f\bigl(\E_\xi[g_\xi(x)]\bigr)\Bigr) 
    = \Bigl(\E_\xi[g'_\xi(x)]\Bigr)^T f'\bigl(\E_\xi[g_\xi(x)]\bigr) \,.
\]
In practice, computing $F'(x)$ exactly can be very costly if not impossible.
A common strategy is to use stochastic approximation:
we randomly sample a subset $\cS$ of~$\xi$ from its distribution and let
\begin{equation}\label{eqn:gg-approx}
\tilde{g}(x) = \frac{1}{|\cS|}\sum_{\xi\in\cS}g_\xi(x), \qquad
\tilde{g'}(x) = \frac{1}{|\cS|}\sum_{\xi\in\cS}g'_\xi(x).
\end{equation}
However, $\left(\tilde{g'}(x)\right)^T\!f(\tilde{g}(x))$ is always a 
\emph{biased} estimate of $F'(x)$ unless one can replace $\tilde{g}(x)$ 
with the full expectation $\E_\xi[g_\xi(x)]$.
This is in great contrast to the classical stochastic optimization problem
\begin{equation}\label{prob:exp-non-composite}
    \minimize_{x\in\R^d} \quad \E_\xi\bigl[g_\xi(x)\bigr] + r(x) \, ,
\end{equation}
where $\tilde{g'}(x)$ in~\eqref{eqn:gg-approx} is always an unbiased gradient
estimator for the smooth part $g(x)=\E_\xi\bigl[g_\xi(x)\bigr]$.
Using biased gradient estimators can cause various difficulties for 
constructing and analyzing randomized algorithms, but is often inevitable in 
dealing with more complex objective functions other than the empirical risk
\citep[see, e.g.,][]{Pratik,Smoothing-1,Smoothing-2,Smoothing-3}. 
As a simplest model, the analysis of randomized algorithms 
for~\eqref{prob:main} may provide insights for 
solving more challenging problems.

In this paper, we develop an efficient stochastic composite gradient method
called CIVR (Composite Incremental Variance Reduction),
for solving problems of the forms~\eqref{prob:main} 
and~\eqref{prob:main-finite}.
We measure efficiency by the sample complexity of the individual
functions $g_\xi$ and their Jacobian $g'_\xi$, i.e., the total number of times
they need to be evaluated at some point, 
in order to find an $\epsilon$-approximate solution.
For nonconvex functions, an $\epsilon$-approximate solution is some 
random output of the algorithm $\bar{x}\in\R^d$ that satisfies 
$\E[\|\cG(\bar{x})\|^2]\leq\epsilon$,
where $\cG(\bar{x})$ is the \emph{proximal gradient mapping} 
of the objective function~$\Phi$ at~$\bar{x}$
(see details in Section~\ref{sec:algm}).
If $r\equiv 0$, then $\cG(\bar{x})=F'(\bar{x})$ 
and the criteria for $\epsilon$-approximation becomes
$\E[\|F'(\bar{x})\|^2]\leq\epsilon$.
If the objective~$\Phi$ is convex, we require 
$\E[\Phi(\bar{x})-\Phi^\star]\leq\epsilon$ 
where $\Phi^\star=\inf_{x}\Phi(x)$.
For smooth \emph{and} convex functions, these two notions are compatible, 
meaning that the dependence of the sample complexity on~$\epsilon$ in terms
of both notions are of the same order.

\begin{table}[t]
    \caption{Sample complexities of CIVR (Composite Incremental Variance Reduction)}
  \label{tab:complexities}
  \centering
  \begin{tabular}{cccc}
    \toprule
    & \multicolumn{3}{c}{Assumptions (common: $f$ and $g_\xi$ Lipschitz and smooth, thus $F$ smooth)} \\
    \cmidrule(l){2-4}
    Problem  & $F$ nonconvex & $F$ $\nu$-gradient dominant & $F$ convex, $r$ convex \\
             & $r$ convex             & $r\equiv 0$           & $\Phi$ $\mu$-optimally strongly convex \\
    \midrule
    \eqref{prob:main}            & $\cO\bigl(\epsilon^{-3/2}\bigr)$  
    & $\cO\left(\bigl(\nu\epsilon^{-1}\bigr)\log\epsilon^{-1}\right)$ 
    & $\cO\left(\bigl(\mu^{-1}\epsilon^{-1}\bigr)\log\epsilon^{-1}\right)$ \\[1ex]
    \eqref{prob:main-finite}     
    & $\cO\bigl(\min\{\epsilon^{-3/2},\,n^{1/2}\epsilon^{-1}\}\bigr)$ 
    & $\cO\left(\bigl(n+\nu n^{1/2}\bigr)\log\epsilon^{-1}\right)$
    & $\cO\left(\bigl(n+\mu^{-1} n^{1/2}\bigr)\log\epsilon^{-1}\right)$  \\
    \bottomrule
  \end{tabular}
\end{table}

Table~\ref{tab:complexities} summarizes the sample complexities of the CIVR
method under different assumptions obtained in this paper.
We can define a condition number $\kappa=\cO(\nu)$ for $\nu$-gradient dominant functions and $\kappa=\cO(1/\mu)$ for $\mu$-optimally strongly convex functions, then the complexities become 
$\cO\bigl(\bigl(\kappa\epsilon^{-1}\bigr)\log\epsilon^{-1}\bigr)$ and
$\cO\bigl(\bigl(n+\kappa n^{1/2}\bigr)\log\epsilon^{-1}\bigr)$
for~\eqref{prob:main} and~\eqref{prob:main-finite} respectively.
In order to better position our contributions,
we next discuss related work and then putting these results into context.

\subsection{Related Work} 
We first discuss the nonconvex stochastic optimization 
problem~\eqref{prob:exp-non-composite},
which is a special cases of~\eqref{prob:main}.
When $r\!\equiv\! 0$ and $g(x)\!=\!\E_\xi[g_\xi(x)]$ is smooth, 
\citet{GhadimiLan2013} developed a randomized stochastic gradient method 
with iteration complexity $\cO(\epsilon^{-2})$. 
\citet{Natasha2NIPS2018} obtained $\cO\bigl(\epsilon^{-1.625}\bigr)$
with additional second-order guarantee.
There are also many recent works on solving its finite-sum version 
\begin{equation}\label{prob:erm-finite-sum}
\minimize_{x\in\R^d} \quad \frac{1}{n}\sum_{i=1}^n g_i(x) + r(x),
\end{equation}
which is a special case of~\eqref{prob:main-finite}.
By extending the variance reduction techniques 
SVRG \citep{johnson2013accelerating,xiaozhang2014proxsvrg}
and SAGA \citep{defazio2014saga} to nonconvex optimization, 
\citet{Allen-ZhuHazan2016} and
\citet{Reddi2016SVRGnonconvex,NCVX-SAGA-Smooth,NCVX-SAGA-Nonsmooth}
developed randomized algorithms with sample complexity
$\cO(n+n^{2/3}\epsilon^{-1})$.
Under additional assumptions of gradient dominance or strong convexity,
they obtained sample complexity
$\cO((n+\kappa n^{2/3})\log\epsilon^{-1})$, 
where $\kappa$ is a suitable condition number.
\citet{Natasha2017ICML}  and \citet{LeiJuChenJordan2017} obtained 
$\cO\bigl(\min\{\epsilon^{-5/3},\,n^{2/3}\epsilon^{-1}\}\bigr)$.

Based on a new variance reduction technique called SARAH \citep{SARAH2017ICML},
\citet{SmoothSARAH2019} and \citet{ProxSARAH2019} 
developed nonconvex extensions to obtain sample complexities  
$\cO\bigl(\epsilon^{-3/2}\bigr)$ and $\cO\bigl(n+n^{1/2}\epsilon^{-1}\bigr)$ 
for solving the expectation and finite-sum cases respectively.
\citet{SPIDER2018NeurIPS} introduced another variance reduction technique
called \textsc{Spider}, which can be viewed as a more general variant of SARAH.
They obtained sample complexities $\cO\bigl(\epsilon^{-3/2}\bigr)$ 
and $\cO\bigl(\min\{\epsilon^{-3/2},\,n^{1/2}\epsilon^{-1}\}\bigr)$ 
for the two cases respectively, but require small step sizes that are
proportional to~$\epsilon$.
\citet{SpiderBoost2018} extended \textsc{Spider} to obtain the same 
complexities with constant step sizes
and $\cO\bigl((n+\kappa^2)\log\epsilon^{-1}\bigr)$ under the gradient-dominant
condition.
In addition, \citet{NestedSVRG2018NeurIPS} obtained similar results using
a nested SVRG approach.


In addition to the above works on solving special cases of~\eqref{prob:main}
and~\eqref{prob:main-finite}, there are also considerable recent works on a
more general, two-layer stochastic composite optimization problem
\begin{equation}\label{prob:Exp-2-comp}
\minimize_{x\in\R^d}\quad  
\E_\nu\bigl[f_\nu\left(\E_\xi[g_\xi(x)]\right)\bigr] + r(x) \, ,
\end{equation}
where $f_\nu$ is parametrized by another random variables~$\nu$, 
which is independent of~$\xi$.
When $r\equiv 0$, \citet{SCGD-M.Wang} derived algorithms to find 
an $\epsilon$-approximate solution
with sample complexities $\cO(\epsilon^{-4})$, $\cO(\epsilon^{-3.5})$ and 
$\cO(\epsilon^{-1.25})$ for the smooth nonconvex case,
smooth convex case and smooth strongly convex case respectively.
For nontrivial convex~$r$, \citet{ASC-PG-M.Wang} obtained improved sample 
complexity of $\cO(\epsilon^{-2.25})$,
$\cO(\epsilon^{-2})$ and $\cO(\epsilon^{-1})$ 
for the three cases mentioned above respectively.

As a special case of~\eqref{prob:Exp-2-comp}, the following finite-sum 
problem also received significant attention:
\begin{equation}\label{prob:Finite-2-Comp}
\minimize_{x\in\R^d}\quad 
\frac{1}{m}\sum_{j=1}^m f_j\biggl(\frac{1}{n}\sum_{i=1}^n g_i(x)\biggr)+r(x)\,.
\end{equation}
When $r\equiv 0$ and the overall objective function is strongly convex,
\citet{SVR-SCGD} derived two algorithms based on the SVRG scheme to attain
sample complexities $\cO((m+ n + \kappa^3)\log\epsilon^{-1}))$ and 
$\cO((m+ n + \kappa^4)\log\epsilon^{-1}))$ respectively, 
where~$\kappa$ is some suitably defined condition number.
\citet{VRSC-PG} also used the SVRG scheme to obtain an  
$\cO(m+n+(m+n)^{2/3}\epsilon^{-1})$
complexity for the smooth nonconvex case and  
$\cO((m+ n + \kappa^3)\log\epsilon^{-1}))$ 
for strongly convex problems with nonsmooth~$r$. 
More recently, \citet{ZhangXiao2019C-SAGA} proposed a composite randomized
incremental gradient method based on the SAGA estimator 
\cite{defazio2014saga}, which matches the best known 
$\cO(m+n+(m+n)^{2/3}\epsilon^{-1})$ complexity
when~$F$ is smooth and nonconvex, and obtained an improved complexity
$\cO\bigl((m+n+\kappa(m+n)^{2/3})\log\epsilon^{-1}\bigr)$ under 
either gradient dominant or strongly convex assumptions.
When applied to the special cases~\eqref{prob:main} 
and~\eqref{prob:main-finite} we focus on in this paper ($m=1$), 
these results are strictly worse than ours in Table~\ref{tab:complexities}.

\subsection{Contributions and Outline} 

We develop the CIVR method by extending the variance reduction technique of
SARAH \citep{SARAH2017ICML,SmoothSARAH2019,ProxSARAH2019} 
and \textsc{Spider} \citep{SPIDER2018NeurIPS,SpiderBoost2018}
to solve the composite optimization problems~\eqref{prob:main} 
and~\eqref{prob:main-finite}.
The complexities of CIVR in Table~\ref{tab:complexities} match
the best results for solving the non-composite 
problems~\eqref{prob:exp-non-composite} and~\eqref{prob:erm-finite-sum},
despite the additional outer composition and the composite-gradient
estimator always being biased.
In addition:
\begin{itemize}
    \item
    It is shown in \citep{SPIDER2018NeurIPS} that the 
    $\cO\bigl(\min\{\epsilon^{-3/2},\,n^{1/2}\epsilon^{-1}\}\bigr)$ complexity
    is nearly optimal for the non-composite finite-sum 
    optimization problem~\eqref{prob:erm-finite-sum}.
    Therefore, we do not expect algorithms with better complexity for solving 
    the more general composite finite-sum problem~\eqref{prob:main-finite}.
    \item
    Under the assumptions of gradient dominance or strong convexity, the 
    $\cO\bigl(\bigl(n\!+\!\kappa n^{1/2}\bigr)\log\epsilon^{-1}\bigr)$
    complexity only appeared for the special case~\eqref{prob:erm-finite-sum}
    in the recent work~\citep{LiLi2018NeurIPS}.
\end{itemize}

Our results indicate that the additional smooth composition
in~\eqref{prob:main} and~\eqref{prob:main-finite}
does not incur higher complexity compared with~\eqref{prob:exp-non-composite}
and~\eqref{prob:erm-finite-sum}, 
despite the difficulty of dealing with biased estimators.
We believe these results can also be extended to the two-layer 
problems~\eqref{prob:Exp-2-comp} and~\eqref{prob:Finite-2-Comp},
by replacing~$n$ with $m+n$ in Table~\ref{tab:complexities}. 
But the extensions require quite different techniques and we will address them 
in a separate paper.

The rest of this paper is organized as follows. 
In Section~\ref{sec:algm}, we introduce the CIVR method.
In Section~\ref{sec:analysis}, we present convergence results of CIVR 
for solving the composite optimization problems~\eqref{prob:main} 
and~\eqref{prob:main-finite} and the required parameter settings.
Better complexities of CIVR under the gradient-dominant and optimally strongly 
convex conditions are given in Section~\ref{sec:fast-rates}.
In Section~\ref{sec:experiments}, we present numerical experiments for solving
a risk-averse portfolio optimization problem~\eqref{prob:risk-averse} 
on real-world datasets.

\section{The composite incremental variance reduction (CIVR) method}
\label{sec:algm}

\SetAlgoHangIndent{1.5em}
\setlength{\algomargin}{1em}
\begin{algorithm2e}[t]
    \DontPrintSemicolon
	\caption{Composite Incremental Variance Reduction (CIVR)}
	\label{alg:CIVR}
    \textbf{input:} 
    initial point $x^1_0$, step size $\eta>0$, number of epochs $T\geq1$,
    and a set of triples $\{\tau_t, B_t, S_t\}$ for $t=1,\ldots,T$,
    where $\tau_t$ is the epoch length and $B_t$ and $S_t$ are sample sizes
    in epoch~$t$. \;
	\For{$t = 1,...,T$}{
		Sample a set $\cB_t$ with size $B_t$ from the distribution of $\xi$,
        and construct the estimates  \\
        \vspace{-0.5ex}
		\begin{equation}
		\label{defn:sarah-1}
		y_0^t = \frac{1}{B_t}\sum_{\xi\in\cB_t}g_\xi(x_0^t), \qquad
		z_0^t =  \frac{1}{B_t}\sum_{\xi\in\cB_t}g_\xi'(x_0^t), \qquad
		\end{equation}
        Compute 
        $\tnf(x_0^t) = (z_0^t)^Tf'(y_0^t)$ and update:
        $x_1^t = \prox_r^{\eta}\bigl(x_0^t-\eta \tnf(x_0^t)\bigr)$. \;
        \vspace{0.5ex}
		\For{$i = 1,...,\tau_t-1$}{
        Sample a set $\cS_i^t$ with size $S_t$ from the distribution of $\xi$,
        and construct the estimates  \\[-2ex]
		\begin{eqnarray}
		\label{defn:sarah-2}
            y_i^t &=& y_{i-1}^t+ \frac{1}{S_t}\sum_{\xi\in\cS_i^t}\left(g_\xi(x_i^t)-g_\xi(x_{i-1}^t)\right), \\[-1ex]
            z_i^t &=& z_{i-1}^t \, +\, \frac{1}{S_t}\sum_{\xi\in\cS_i^t}\bigl(g'_\xi(x_i^t)-g'_\xi(x_{i-1}^t)\bigr).
		\end{eqnarray}
        Compute $\tnf(x_i^t) = (z_i^t)^Tf'(y_i^t)$ and update:
        $x_{i+1}^t = \prox_r^{\eta}\bigl(x_i^t-\eta \tnf(x_i^t)\bigr)$. \;
	}
    Set $x_0^{t+1} = x_{\tau_t}^t$. \;
	}
    \vspace{-1ex}
    \textbf{output:} $\bar{x}$ randomly chosen from  
    $\bigl\{x_i^t\bigr\}_{i=0,...,\tau_t-1}^{t = 1,...,T}$. \; 
\end{algorithm2e} 

With the notations in~\eqref{eqn:simple-notation}, we can write the composite
stochastic optimization problem~\eqref{prob:main} as
\begin{equation}\label{eqn:min-smooth-plus-simple}
    \minimize_{x\in\R^d}\quad \bigl\{ \Phi(x) = F(x) + r(x) \bigr\} \,,
\end{equation}
where $F$ is smooth and $r$ is convex.
The proximal operator of~$r$ with parameter~$\eta$ is defined as
\be
\label{defn:prox-operator}
\prox^{\eta}_r(x):=\argmin_y \,\Bigl\{r(y) + \frac{1}{2\eta}\|y-x\|^2\Bigr\}.
\ee
We assume that~$r$ is relatively simple, meaning that its 
proximal operator has a closed-form solution or can be computed efficiently.
The proximal gradient method 
\citep[e.g.,][]{Nesterov2013composite,Beck2017book}
for solving problem~\eqref{eqn:min-smooth-plus-simple} is
\begin{equation}\label{eqn:prox-grad-method}
    x^{t+1} = \prox_r^\eta\bigl(x^t-\eta F'(x^t)\bigr) \,,
\end{equation}
where $\eta$ is the step size. 
The \emph{proximal gradient mapping} of $\Phi$ is defined as
\be
\label{defn:ProxGD}
\mathcal{G}_\eta(x)  \triangleq
\frac{1}{\eta}\Bigl(x - \prox^{\eta}_r\bigl(x-\eta F'(x)\bigr)\Bigr).
\ee
As a result, the proximal gradient method~\eqref{eqn:prox-grad-method}
can be written as $x^{t+1} = x^{t} - \eta\, \mathcal{G}(x^t)$.
Notice that when $r\equiv 0$, $\prox_r^\eta(\cdot)$ becomes the identity
mapping and we have $\mathcal{G}_\eta(x)\equiv F'(x)$ for any $\eta>0$.

Suppose $\bar x$ is generated by a randomized algorithm.
We call $\bar x$ an $\epsilon$-stationary point in expectation if 
\begin{equation}\label{eqn:eps-stationary}
\E\bigl[\|\mathcal{G}_\eta(\bar x)\|^2\bigr] \leq \epsilon.
\end{equation}
(We assume that $\eta$ is a constant that does not depend on~$\epsilon$.)
As we mentioned in the introduction, we measure the efficiency of an algorithm
by its sample complexity of $g_\xi$ and their Jacobian $g'_\xi$, i.e., 
the total number of times they need to be evaluated, in order to find 
a point~$\bar x$ that satisfies~\eqref{eqn:eps-stationary}.
Our goal is to develop a randomized algorithm that has low sample 
complexity.

We present in Algorithm~\ref{alg:CIVR} the Composite Incremental Variance
Reduction (CIVR) method.
This methods employs a two time-scale variance-reduced estimator
for both the inner function value of $g(\cdot)=\E_\xi[g_\xi(\cdot)]$ 
and its Jacobian $g'(\cdot)$.
At the beginning of each outer iteration~$t$ (each called an epoch), 
we construct a relatively accurate estimate $y_0^t$ for $g(x_0^t)$ and 
$z_0^t$ for $g'(x_0^t)$ respectively, using a relatively large sample
size $B_t$. 
During each inner iteration~$i$ of the $t$th epoch, we construct an estimate
$y_i^t$ for $g(x_i^t)$ and $z_i^t$ for $g'(x_i^t)$ respectively, 
using a smaller sample size $S_t$ \emph{and} incremental corrections from the
previous iterations. 
Note that the epoch length $\tau_t$ and the sample sizes $B_t$ and $S_t$ 
are all adjustable for each epoch~$t$. Therefore, besides setting a constant
set of parameters, we can also adjust them gradually in order to obtain
better theoretical properties and practical performance.

This variance-reduction technique was first proposed as part of SARAH 
\cite{SARAH2017ICML} where it is called \emph{recursive} variance reduction.
It was also proposed in \citep{SPIDER2018NeurIPS} in the form of a
\emph{Stochastic Path-Integrated Differential EstimatoR} (\textsc{Spider}).
Here we simply call it \emph{incremental} variance reduction.
A distinct feature of this incremental estimator is that the inner-loop 
estimates $y_i^t$ and $z_i^t$ are biased, i.e.,
\begin{equation}\label{eqn:biased-estimate}
\begin{cases}
    ~\E[y_i^t|x_i^t] = g(x_i^t) - g(x_{i-1}^t) + y_{i-1}^t \neq g(x_i^t)\,, \\
    ~\E[z_i^t|x_i^t] = g'(x_i^t) - g'(x_{i-1}^t) + z_{i-1}^t \neq g'(x_i^t)\,.
\end{cases} 
\end{equation}
This is in contrast to two other popular variance-reduction techniques, 
SVRG \citep{johnson2013accelerating} and SAGA \citep{defazio2014saga},
whose gradient estimators are always unbiased.
Note that unbiased estimators for $g(x_i^t)$ and $g'(x_i^t)$ are not
essential here, because the composite estimator 
$\tilde{\nabla}F(x_i^t)=(z_i^t)^T f'(y_i^t)$
is always biased.

\section{Convergence Analysis}
\label{sec:analysis}

In this section, we present theoretical results on the convergence properties
of CIVR (Algorithm~\ref{alg:CIVR}) when the composite function~$F$ is smooth.
More specifically, we make the following assumptions.

\begin{assumption}
\label{assumption:Lip}
The following conditions hold concerning problems~\eqref{prob:main}
and~\eqref{prob:main-finite}:
\begin{itemize} \itemsep 0pt
    \item 
	$f: \R^{p}\rightarrow\R$ is a $C^1$ smooth and $\ell_f$-Lipschitz function and its gradient $f'$ is $L_f$-Lipschitz. 
    \item Each $g_\xi:\R^{d}\rightarrow\R^p$ is a $C^1$ smooth and $\ell_g$-Lipschitz vector mapping and its Jacobian $g_\xi'$ is $L_g$-Lipschtiz. 
    Consequently, $g$ in~\eqref{eqn:simple-notation}
    is $\ell_g$-Lipschitz and its Jacobian $g'$ is $L_g$-Lipschitz.
    \item $r:\R^{d}\rightarrow\R\cup\{\infty\}$ is a convex and lower-semicontinuous function. 
    \item The overall objective function $\Phi$ is bounded below, i.e.,
        $\Phi^*=\inf_x\Phi(x)>-\infty$.
\end{itemize}
\end{assumption}
\vspace{0.5ex}
\begin{assumption}
	\label{assumption:variance}
    For problem~\eqref{prob:main}, we further assume that
    there exist constants $\sigma_g$ and $\sigma_{g'}$ such that
	\begin{equation}
	\E_\xi[\|g_\xi(x) - g(x)\|^2]\leq \sigma_g^2 \, , \qquad 
    \E_\xi[\|g'_\xi(x) - g'(x)\|^2]\leq \sigma_{g'}^2 \, .
	\end{equation}
\end{assumption}

As a result of Assumption~\ref{assumption:Lip},
$F(x)=f\bigl(g(x)\bigr)$ is smooth and $F'$ is $L_F$-Lipschitz continuous with
\[
    L_F = \ell_g^2 L_f + \ell_f L_g
\]
(see proof in the supplementary materials).
For convenience, we also define two constants
\begin{equation}
\label{defn:G0-sigma0}
G_0: = 2\bigl(\ell_g^4 L_f^2  + \ell_f^2 L_g^2\bigr) \, ,
\qquad \mbox{and}\qquad 
\sigma_0^2 := 2\bigl(\ell_g^2 L_f^2\sigma_{g}^2 +\ell_f^2\sigma_{g'}^2\bigr)\,.
\end{equation}
It is important to notice that $G_0=\cO(L_F^2)$, since we will use step size
$\eta=\Theta(1/\sqrt{G_0})=\Theta(1/L_F)$.

In the next two subsections, we present complexity analysis of CIVR for
solving problem~\eqref{prob:main} and~\eqref{prob:main-finite} respectively. 
Due to the space limitation, 
all proofs are provided in the supplementary materials.

\subsection{The composite expectation case}

The following results for solving problem~\eqref{prob:main} are presented
with notations defined in~\eqref{eqn:simple-notation}, 
\eqref{defn:ProxGD} and~\eqref{defn:G0-sigma0}.

\begin{theorem}
	\label{theorem:cvg-general-fixed} 
    Suppose Assumptions~\ref{assumption:Lip} and~\ref{assumption:variance} hold.
	Given any $\epsilon>0$, 
    we set $T = \lceil 1/\sqrt{\epsilon} \rceil$ and
\[
    \tau_t=\tau=\lceil 1/\sqrt{\epsilon} \rceil, \quad
    B_t = B = \lceil \sigma_0^2/\epsilon \rceil, \quad
    S_t = S = \lceil 1/\sqrt{\epsilon} \rceil, \quad
    \mbox{for}\quad t=1,\ldots,T.
\]
    Then as long as $\eta\leq\frac{4}{L_F + \sqrt{L_F^2 + 12G_0}}$, 
    the output $\bar{x}$ of Algorithm~\ref{alg:CIVR} satisfies
	\begin{equation}
	\label{thm:cvg-general-fixed}
	\E\bigl[\|\mathcal{G}_\eta(\bar{x})\|^2\bigr] 
    \leq 
    \Bigl(8\bigl(\Phi(x_0^1) - \Phi^*\bigr)\eta^{-1} + 6\Bigr)\cdot\epsilon 
    = \cO(\epsilon).
	\end{equation}
    As a result, the sample complexity of obtaining an $\epsilon$-approximate
    solution is $TB + 2 T\tau S = \cO\bigl(\epsilon^{-3/2}\bigr)$.
\end{theorem}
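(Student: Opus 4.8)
The plan is to follow the template of the SARAH/\textsc{Spider} analyses, adapted to the fact that the composite estimator $\tnf(x_i^t)=(z_i^t)^T f'(y_i^t)$ is biased. First I would derive a \emph{one-step descent inequality}. Writing the inner update as $x_{i+1}^t=\prox_r^{\eta}(x_i^t-\eta\,\tnf(x_i^t))$ and using the three-point optimality inequality of the prox map together with the $L_F$-smoothness of $F$ (with $L_F=\ell_g^2 L_f+\ell_f L_g$, proved in the supplement), one gets, after a Young's inequality,
\[
\Phi(x_{i+1}^t)\le\Phi(x_i^t)-\Bigl(\tfrac{1}{2\eta}-\tfrac{L_F}{2}\Bigr)\|x_{i+1}^t-x_i^t\|^2+\tfrac{\eta}{2}\bigl\|\tnf(x_i^t)-F'(x_i^t)\bigr\|^2.
\]
Combining this with the nonexpansiveness of $\prox_r^{\eta}$, which yields $\|\mathcal{G}_\eta(x_i^t)\|^2\le 2\eta^{-2}\|x_{i+1}^t-x_i^t\|^2+2\|\tnf(x_i^t)-F'(x_i^t)\|^2$, turns the descent into a decrease of $\Phi$ proportional to $\|\mathcal{G}_\eta(x_i^t)\|^2$, up to the estimator error $\|\tnf(x_i^t)-F'(x_i^t)\|^2$.

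The heart of the proof is bounding this estimator error in expectation. I would first split it \emph{deterministically} as $\tnf(x_i^t)-F'(x_i^t)=(z_i^t-g'(x_i^t))^T f'(y_i^t)+g'(x_i^t)^T\bigl(f'(y_i^t)-f'(g(x_i^t))\bigr)$, and use $\|f'\|\le\ell_f$, $\|g'\|\le\ell_g$, and $L_f$-Lipschitzness of $f'$ to get $\|\tnf(x_i^t)-F'(x_i^t)\|^2\le 2\ell_f^2\|z_i^t-g'(x_i^t)\|^2+2\ell_g^2 L_f^2\|y_i^t-g(x_i^t)\|^2$. Then, exploiting the \emph{conditional} unbiasedness $\E[y_i^t\mid\cF_i^t]=y_{i-1}^t+g(x_i^t)-g(x_{i-1}^t)$ (and likewise for $z_i^t$, as already noted in~\eqref{eqn:biased-estimate}), the cross terms vanish and the variance of an average of $S_t$ i.i.d.\ increments is controlled by Lipschitzness, giving $\E\|y_i^t-g(x_i^t)\|^2\le\E\|y_{i-1}^t-g(x_{i-1}^t)\|^2+\tfrac{\ell_g^2}{S_t}\E\|x_i^t-x_{i-1}^t\|^2$ and similarly with $L_g^2$ for $z_i^t$; at $i=0$, Assumption~\ref{assumption:variance} and independence of the $B_t$ samples give $\E\|y_0^t-g(x_0^t)\|^2\le\sigma_g^2/B_t$ and $\E\|z_0^t-g'(x_0^t)\|^2\le\sigma_{g'}^2/B_t$. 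Telescoping in $i$ and substituting back yields, with $\sigma_0^2$ and $G_0$ as in~\eqref{defn:G0-sigma0},
\[
\E\bigl\|\tnf(x_i^t)-F'(x_i^t)\bigr\|^2\le\frac{\sigma_0^2}{B_t}+\frac{G_0}{S_t}\sum_{j=1}^{i}\E\|x_j^t-x_{j-1}^t\|^2.
\]

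The final step is aggregation. Summing the descent inequality over $i=0,\dots,\tau_t-1$ and $t=1,\dots,T$, and using $x_0^{t+1}=x_{\tau_t}^t$ so that $\Phi$ telescopes to at most $\Phi(x_0^1)-\Phi^*$, I substitute the error bound; the double sum $\sum_{i}\sum_{j\le i}\|x_j^t-x_{j-1}^t\|^2\le\tau_t\sum_j\|x_j^t-x_{j-1}^t\|^2$ produces a feedback term of size $\tfrac{\eta}{2}\cdot\tfrac{G_0\tau_t}{S_t}$ times $\sum_j\|x_j^t-x_{j-1}^t\|^2$. With $\tau_t=S_t$ this ratio is $1$, and the step-size condition $\eta\le 4/(L_F+\sqrt{L_F^2+12G_0})$ — equivalently $3G_0\eta^2+2L_F\eta\le4$ — is exactly what makes the $(\tfrac{1}{2\eta}-\tfrac{L_F}{2})\|x_{i+1}^t-x_i^t\|^2$ terms absorb this feedback and still leave a positive coefficient on $\sum\|x_{i+1}^t-x_i^t\|^2$. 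What is left on the right is $\tfrac{\eta}{2}\sum_t\tau_t\sigma_0^2/B_t$, which with $B_t=\lceil\sigma_0^2/\epsilon\rceil$, $\tau_t=\lceil1/\sqrt\epsilon\rceil$ and $T=\lceil1/\sqrt\epsilon\rceil$ is $\cO(1)$ times $\epsilon$ after normalization. Dividing by $\sum_t\tau_t=T\tau=\Theta(\epsilon^{-1})$ and using that $\bar x$ is drawn uniformly from $\{x_i^t\}$ gives $\E[\|\mathcal{G}_\eta(\bar x)\|^2]\le(8(\Phi(x_0^1)-\Phi^*)\eta^{-1}+6)\epsilon$; the sample count is $TB+2T\tau S=\cO(\epsilon^{-3/2})$, the factor $2$ accounting for sampling both $g_\xi$ and $g_\xi'$ in the inner loop.

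The main obstacle I anticipate is the interaction between the estimator-error recursion and the descent: the error at step $i$ depends on \emph{all} past increments $\|x_j^t-x_{j-1}^t\|^2$, so naive summation gives a $\tau_t$-fold amplification that must be killed by the balance $\tau_t/S_t=1$ together with a sufficiently small $\eta$; making the constants line up so the final coefficients are nonnegative and match the $8$ and $6$ in the statement is where the bookkeeping of the various Young's-inequality splittings has to be done carefully. By contrast, the biased, composite nature of $\tnf$ causes no essential trouble once the deterministic Lipschitz split above is in place, since it reduces everything to the two clean scalar recursions for $\E\|y_i^t-g(x_i^t)\|^2$ and $\E\|z_i^t-g'(x_i^t)\|^2$.
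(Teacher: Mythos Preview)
Your proposal is correct and follows essentially the same route as the paper's proof: the paper packages your one-step descent as Lemma~\ref{lemma:descent-general} (your inequality is exactly~\eqref{lm:descent-general-1}), your prox-nonexpansiveness bound as Lemma~\ref{lemma:grad-mapping-closedness}, your deterministic Lipschitz split and SARAH-style recursions as Lemmas~\ref{lemma:MSE-grad-expectation} and~\ref{lemma:sarah-mse}, and then aggregates exactly as you describe, using $\tau_t=S_t$ so the $\tau_t/S_t$ feedback factor equals~$1$ and the step-size condition kills the residual coefficient on $\sum_r\|x_r^t-x_{r-1}^t\|^2$. The only cosmetic difference is that the paper splits off $\tfrac{\eta}{4}\|\tcgt\|^2$ (rather than a generic fraction) before invoking~\eqref{eqn:closeness}, which turns the error coefficient into $\tfrac{3\eta}{4}$ instead of your stated $\tfrac{\eta}{2}$ and is what produces the specific constants $8$ and $6$; as you anticipated, this is pure bookkeeping.
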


Note that in the above scheme, the epoch lengths $\tau_t$ and
all the batch sizes $B_t$ and $S_t$ are set to be constant (depending on a
pre-fixed $\epsilon$) without regard of~$t$. 
Intuitively, we do not need as many samples in the early stage of the algorithm
as in the later stage.
In addition, it will be useful in practice to have a variant of the algorithm 
that can adaptively choose $\tau_t$, $B_t$ and $S_t$ throughout the epochs
without dependence on a pre-fixed precision.
This is done in the following theorem. 

\vspace{0.5ex}

\begin{theorem}
	\label{theorem:cvg-general-adaptive} 
    Suppose Assumptions~\ref{assumption:Lip} and~\ref{assumption:variance} hold.
    We set $\tau_t = S_t = \lceil at + b\rceil$ and 
    $B_t=\lceil \sigma_0^2(at+b)^2 \rceil$ where $a>0$ and $b\geq0$. 
    Then as long as $\eta\leq\frac{4}{L_F + \sqrt{L_F^2 + 12G_0}}$, we have 
    for any $T\geq 1$,
	\begin{equation}
	\label{thm:cvg-general-adaptive}
	\E\bigl[\|\mathcal{G}_\eta(\bar{x})\|^2\bigr] 
    \leq  \frac{2}{aT^2+(a+2b)T}\left(\frac{8\bigl(\Phi(x_0^1) - \Phi^*\bigr)}{\eta} + \frac{6}{a+b}+\frac{6}{a}\ln\left(\frac{aT+b}{a+b}\right)\right)
	 = \cO\Bigl(\frac{\ln T}{T^2}\Bigr).
	\end{equation}
    As a result, obtaining an $\epsilon$-approximate solution requires $T = \tilde{\cO}(1/\sqrt{\epsilon})$ epochs and a total sample complexity of $\tilde{\cO}\bigl(\epsilon^{-3/2}\bigr)$, where the $\tilde{\cO}(\cdot)$ notation hides logarithmic factors.
\end{theorem}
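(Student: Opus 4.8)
The plan is to reuse the one-epoch descent machinery behind Theorem~\ref{theorem:cvg-general-fixed}, but to carry the epoch index~$t$ through every constant and then sum a non-geometric series instead of a geometric one. First I would establish the basic per-iteration progress inequality for the step $x_{i+1}^t = \prox_r^\eta\bigl(x_i^t-\eta\tnf(x_i^t)\bigr)$: using $L_F$-smoothness of $F$, the three-point property of $\prox_r^\eta$, and writing $e_i^t := \tnf(x_i^t) - F'(x_i^t)$ for the estimator error, one obtains
\[
\E\bigl[\Phi(x_{i+1}^t)\bigr] \le \E\bigl[\Phi(x_i^t)\bigr] - c_1\eta\,\E\|\mathcal{G}_\eta(x_i^t)\|^2 + c_2\eta\,\E\|e_i^t\|^2,
\]
valid once $\eta = \cO(1/L_F)$. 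Summing over $i=0,\dots,\tau_t-1$ and $t=1,\dots,T$ and telescoping $\Phi$ (using $x_0^{t+1}=x_{\tau_t}^t$) leaves
\[
c_1\eta\sum_{t=1}^{T}\sum_{i=0}^{\tau_t-1}\E\|\mathcal{G}_\eta(x_i^t)\|^2 \le \Phi(x_0^1)-\Phi^* + c_2\eta\sum_{t=1}^{T}\sum_{i=0}^{\tau_t-1}\E\|e_i^t\|^2.
\]

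The heart of the argument is the bound on $\sum_i\E\|e_i^t\|^2$ within one epoch. Here I would follow the SARAH/\textsc{Spider} recursive-variance idea but account for the outer composition: since $\tnf(x_i^t)=(z_i^t)^\T f'(y_i^t)$ while $F'(x_i^t)=g'(x_i^t)^\T f'(g(x_i^t))$, I first control $\E\|y_i^t-g(x_i^t)\|^2$ and $\E\|z_i^t-g'(x_i^t)\|^2$ via the recursive telescoping identity implied by~\eqref{eqn:biased-estimate}, bounding each by its initial batch variance ($\le\sigma_g^2/B_t$, resp.\ $\sigma_{g'}^2/B_t$) plus $S_t^{-1}\sum_{j\le i}\E\|x_j^t-x_{j-1}^t\|^2$; then I propagate these through $f'$ and $g'$ using their Lipschitz constants and the $\ell_f,\ell_g$ bounds, splitting $\|(z_i^t)^\T f'(y_i^t)-g'(x_i^t)^\T f'(g(x_i^t))\|$ and using boundedness of $z_i^t$ and $f'$ along the trajectory. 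This is exactly where the composite constants $G_0=\cO(L_F^2)$ and $\sigma_0^2$ of~\eqref{defn:G0-sigma0} arise. Since $x_j^t-x_{j-1}^t=-\eta\mathcal{G}_\eta(x_{j-1}^t)$, summing over the epoch yields a bound of the form
\[
\sum_{i=0}^{\tau_t-1}\E\|e_i^t\|^2 \;\le\; \frac{\sigma_0^2\,\tau_t}{B_t} \;+\; \frac{G_0\eta^2\tau_t}{S_t}\sum_{i=0}^{\tau_t-1}\E\|\mathcal{G}_\eta(x_i^t)\|^2 .
\]

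Combining the displays, the choice $S_t=\tau_t$ collapses the second error term to $G_0\eta^2\sum_i\E\|\mathcal{G}_\eta(x_i^t)\|^2$, and the stated step-size condition $\eta\le 4/(L_F+\sqrt{L_F^2+12G_0})$ is precisely the threshold below which $c_1-c_2G_0\eta^2$ stays above a fixed positive constant, so this term moves to the left-hand side. What survives is
\[
\eta\sum_{t=1}^{T}\sum_{i=0}^{\tau_t-1}\E\|\mathcal{G}_\eta(x_i^t)\|^2 \;\lesssim\; \Phi(x_0^1)-\Phi^* + \eta\,\sigma_0^2\sum_{t=1}^{T}\frac{\tau_t}{B_t}.
\]
With $B_t=\lceil\sigma_0^2(at+b)^2\rceil$ and $\tau_t=\lceil at+b\rceil$ one has $\sigma_0^2\,\tau_t/B_t=\Theta\bigl(1/(at+b)\bigr)$, and comparison with $\int\! dt/(at+b)$ gives $\sigma_0^2\sum_{t=1}^{T}\tau_t/B_t \le \tfrac{1}{a+b}+\tfrac{1}{a}\ln\tfrac{aT+b}{a+b}$ — this is the source of the logarithmic factor. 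Since $\bar x$ is drawn uniformly from the $N=\sum_{t=1}^{T}\lceil at+b\rceil \ge \tfrac12\bigl(aT^2+(a+2b)T\bigr)$ visited points, $\E\|\mathcal{G}_\eta(\bar x)\|^2 = N^{-1}\sum_{t,i}\E\|\mathcal{G}_\eta(x_i^t)\|^2$, and dividing through reproduces~\eqref{thm:cvg-general-adaptive} with the stated constants. Finally $\sum_t B_t + 2\sum_t\tau_t S_t = \cO(T^3)$, so $T=\tilde{\cO}(\epsilon^{-1/2})$ epochs yield total sample complexity $\tilde{\cO}(\epsilon^{-3/2})$.

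The main obstacle is the recursive estimator-error lemma under composition: unlike plain SARAH/\textsc{Spider}, $e_i^t$ is not the recursive variance of a single gradient estimator but a product of a biased Jacobian estimate $z_i^t$ with $f'$ evaluated at a biased inner estimate $y_i^t$, so one must track two coupled recursions and then pass them through the nonlinearity $f'$ while ensuring the accumulated constants collapse into exactly $G_0$ and $\sigma_0^2$ with no hidden extra $\tau_t$-dependence. Everything else — the telescoping, the step-size calibration, and the series estimate — is essentially the proof of Theorem~\ref{theorem:cvg-general-fixed} with constant parameters replaced by the $t$-dependent ones, and requires only bookkeeping once that lemma is in hand.
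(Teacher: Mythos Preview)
Your overall architecture is right and matches the paper's: recursive MSE bounds on $y_i^t,z_i^t$ feeding into a bound on $\E\|e_i^t\|^2$ of the form $G_0/S_t\sum_r\E\|x_r^t-x_{r-1}^t\|^2+\sigma_0^2/B_t$, then a per-step descent inequality, telescoping, $\tau_t=S_t$, the harmonic-type sum $\sum_t 1/(at+b)$, and finally division by $N=\sum_t\tau_t$. The constants and the series estimate are handled correctly.

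There is, however, a genuine gap in the absorption step. You write $x_j^t-x_{j-1}^t=-\eta\,\mathcal{G}_\eta(x_{j-1}^t)$, but this is false: the algorithm moves along the \emph{approximate} proximal-gradient mapping $\tcg(x_{j-1}^t):=\eta^{-1}(x_{j-1}^t-x_j^t)$ built from $\tnf$, not along $\mathcal{G}_\eta$ built from the true $F'$. Hence the recursive error bound you derive is really
\[
\sum_{i=0}^{\tau_t-1}\E\|e_i^t\|^2 \;\le\; \frac{\sigma_0^2\tau_t}{B_t}+\frac{G_0\eta^2\tau_t}{S_t}\sum_{i=0}^{\tau_t-1}\E\|\tcg(x_i^t)\|^2,
\]
with $\tcg$, not $\mathcal{G}_\eta$, on the right. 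Your descent inequality has $\mathcal{G}_\eta$ on the left, so the ``move to the left-hand side'' step does not close: there is no bound of the form $\E\|\tcg\|^2\le C\,\E\|\mathcal{G}_\eta\|^2$ (the available relation, $\E\|\mathcal{G}_\eta\|^2\le 2\E\|\tcg\|^2+2\E\|e\|^2$, goes the wrong way).

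The fix, and what the paper does, is to \emph{not} drop the $\|x_{i+1}^t-x_i^t\|^2$ term when forming the descent inequality. Starting from $L_F$-smoothness and the prox three-point inequality one gets
\[
\E[\Phi(x_{i+1}^t)]\le\E[\Phi(x_i^t)]-\frac{\eta}{8}\E\|\mathcal{G}_\eta(x_i^t)\|^2+\frac{3\eta}{4}\E\|e_i^t\|^2-\Bigl(\frac{1}{4\eta}-\frac{L_F}{2}\Bigr)\E\|x_{i+1}^t-x_i^t\|^2,
\]
keeping the last term. After summing over the epoch and inserting the error bound (which is naturally in $\|x_r^t-x_{r-1}^t\|^2$), that negative term absorbs the accumulated variance; the coefficient becomes $\frac{1}{4\eta}-\frac{L_F}{2}-\frac{3G_0\eta}{4}\cdot\frac{\tau_t}{S_t}$, and with $\tau_t=S_t$ the stated threshold $\eta\le 4/(L_F+\sqrt{L_F^2+12G_0})$ is exactly what makes it nonnegative. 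From there your remaining steps (telescoping, the $\ln$ bound on $\sum_t\tau_t/B_t$, division by $N$, and the $\tilde{\cO}(\epsilon^{-3/2})$ count) go through unchanged.
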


\subsection{The composite finite-sum case}

In this section, we consider the composite finite-sum optimization
problem~\eqref{prob:main-finite}.
In this case, the random variable $\xi$ has a uniform distribution over
the finite index set $\{1,...,n\}$.
At the beginning of each epoch in Algorithm~\ref{alg:CIVR}, 
we use the full sample size $\cB_t=\{1,\ldots,n\}$
to compute $y_0^t$ and $z_0^t$.
Therefore $B_t=n$ for all $t$ and 
Equation~\eqref{defn:sarah-1} in Algorithm~\ref{alg:CIVR} becomes
\begin{equation}
\label{defn:sarah-1-finte}
y_0^t= g(x_0^t) = \frac{1}{n}\sum_{j=1}^ng_j(x_0^t) \,,
\qquad 
z_0^t= g'(x_0^t) = \frac{1}{n}\sum_{j=1}^ng'_j(x_0^t) \,.
\end{equation}
Also in this case, we no longer need Assumption~\ref{assumption:variance}.

\begin{theorem}
	\label{theorem:cvg-finite-fixed} 
    Suppose Assumptions~\ref{assumption:Lip} holds.
    Let the parameters in Algorithm~\ref{alg:CIVR} be set as
    $\cB_t=\{1,\ldots,n\}$ and 
    $\tau_t = S_t = \lceil \sqrt{n} \rceil$ for all~$t$. 
    Then as long as $\eta\leq\frac{4}{L_F + \sqrt{L_F^2 + 12G_0}}$, 
    we have for any $T\geq 1$, 
	\begin{equation}
	\label{thm:cvg-finite-fixed}
	\E\bigl[\|\mathcal{G}_\eta(\bar{x})\|^2\bigr] \leq \frac{8\bigl(\Phi(x_0^1) - \Phi^*\bigr)}{\eta\sqrt{n}T} = \cO\left(\frac{1}{\sqrt{n}T}\right),
	\end{equation}
    As a result, obtaining an $\epsilon$-approximate solution requires 
$T = \cO\bigl(1/(\sqrt{n}\epsilon)\bigr)$ epochs and a total sample complexity of $TB + 2 T\tau S = \cO\bigl(n+\sqrt{n}\epsilon^{-1}\bigr)$.
\end{theorem}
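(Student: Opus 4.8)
The plan is to pair a one-step inexact proximal-gradient descent inequality with a telescoping bound on the error of the incremental estimators $y_i^t$ and $z_i^t$, and then sum over all inner and outer iterations. First I would introduce the inexact gradient mapping actually used by the algorithm, $\tilde{\mathcal{G}}_\eta(x_i^t):=\eta^{-1}(x_i^t-x_{i+1}^t)$. The optimality condition defining $\prox_r^\eta$ together with $L_F$-smoothness of $F$ and a Young inequality on the cross term yield a descent inequality of the form
\begin{equation*}
\Phi(x_{i+1}^t)\le\Phi(x_i^t)-\Bigl(\tfrac{1}{2\eta}-\tfrac{L_F}{2}\Bigr)\|x_{i+1}^t-x_i^t\|^2+\tfrac{\eta}{2}\bigl\|\tnf(x_i^t)-F'(x_i^t)\bigr\|^2,
\end{equation*}
while nonexpansiveness of $\prox_r^\eta$ gives $\|\mathcal{G}_\eta(x_i^t)-\tilde{\mathcal{G}}_\eta(x_i^t)\|\le\|\tnf(x_i^t)-F'(x_i^t)\|$, hence $\|\mathcal{G}_\eta(x_i^t)\|^2\le 2\eta^{-2}\|x_{i+1}^t-x_i^t\|^2+2\|\tnf(x_i^t)-F'(x_i^t)\|^2$. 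Both inequalities reduce everything to controlling the biased composite-gradient error $\|\tnf(x_i^t)-F'(x_i^t)\|^2$.

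Next I would bound that error. Writing $\tnf(x_i^t)=(z_i^t)^Tf'(y_i^t)$ and $F'(x_i^t)=g'(x_i^t)^Tf'(g(x_i^t))$, I would split the difference and use that $f'$ is $L_f$-Lipschitz with $\|f'\|\le\ell_f$, $g$ is $\ell_g$-Lipschitz and $g'$ is $L_g$-Lipschitz, to get $\|\tnf(x_i^t)-F'(x_i^t)\|^2\le 2\ell_f^2\|z_i^t-g'(x_i^t)\|^2+2\ell_g^2L_f^2\|y_i^t-g(x_i^t)\|^2$. Then comes the core estimate, the variance recursion for the SARAH/\textsc{Spider}-type estimators: conditioning on the past, $(y_i^t-g(x_i^t))-(y_{i-1}^t-g(x_{i-1}^t))$ has zero conditional mean, so the cross term vanishes and $\E\|y_i^t-g(x_i^t)\|^2\le\E\|y_{i-1}^t-g(x_{i-1}^t)\|^2+\ell_g^2S_t^{-1}\E\|x_i^t-x_{i-1}^t\|^2$, with the same bound for $z$ with $\ell_g^2$ replaced by $L_g^2$. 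Because the full batch $\cB_t=\{1,\ldots,n\}$ is used at the start of each epoch, $y_0^t=g(x_0^t)$ and $z_0^t=g'(x_0^t)$ exactly; unrolling the recursions within epoch $t$ therefore gives $\E\|y_i^t-g(x_i^t)\|^2\le\ell_g^2S_t^{-1}\sum_{j=1}^i\E\|x_j^t-x_{j-1}^t\|^2$ and similarly for $z$. Plugging into the split, recognizing the constant $G_0=2(\ell_g^4L_f^2+\ell_f^2L_g^2)$, summing over $i=0,\ldots,\tau_t-1$, and bounding the resulting double sum $\sum_i\sum_{j\le i}$ by $\tau_t\sum_i\E\|x_{i+1}^t-x_i^t\|^2$, I obtain with the choice $\tau_t=S_t$
\begin{equation*}
\sum_{i=0}^{\tau_t-1}\E\bigl\|\tnf(x_i^t)-F'(x_i^t)\bigr\|^2\le\frac{G_0\tau_t}{S_t}\sum_{i=0}^{\tau_t-1}\E\|x_{i+1}^t-x_i^t\|^2=G_0\sum_{i=0}^{\tau_t-1}\E\|x_{i+1}^t-x_i^t\|^2.
\end{equation*}

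Finally I would assemble the pieces. Summing the descent inequality over $i$ within an epoch, inserting the error bound, and then summing over epochs $t=1,\ldots,T$ (using $x_0^{t+1}=x_{\tau_t}^t$ to telescope $\Phi$ down to $\Phi(x_0^1)-\Phi^*$), all the $\|x_{i+1}^t-x_i^t\|^2$ terms collapse with the single coefficient $\tfrac{1}{2\eta}-\tfrac{L_F}{2}-\tfrac{\eta G_0}{2}$; the stated condition $\eta\le 4/(L_F+\sqrt{L_F^2+12G_0})$ makes this coefficient positive and large enough that, after also inserting the bound $\|\mathcal{G}_\eta(x_i^t)\|^2\le 2\eta^{-2}\|x_{i+1}^t-x_i^t\|^2+2\|\tnf(x_i^t)-F'(x_i^t)\|^2$, one arrives at $\sum_{t,i}\E\|\mathcal{G}_\eta(x_i^t)\|^2\le 8\eta^{-1}(\Phi(x_0^1)-\Phi^*)$. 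Since $\bar x$ is drawn uniformly from the $N=\sum_t\tau_t=T\lceil\sqrt n\rceil$ iterates, dividing by $N$ gives $\E\|\mathcal{G}_\eta(\bar x)\|^2\le 8(\Phi(x_0^1)-\Phi^*)/(\eta\sqrt n\,T)$. For the complexity, each epoch costs $n$ evaluations at its start plus $O(\tau_tS_t)=O(n)$ inside, so $O(nT)$ total; choosing $T=\Theta(1/(\sqrt n\epsilon))$ makes the right-hand side $\le\epsilon$, and the additive $n$ in $\cO(n+\sqrt n\epsilon^{-1})$ simply accounts for needing at least one epoch.

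The step I expect to be the main obstacle is the error bound of the middle paragraph: unlike in the non-composite \textsc{Spider}/SARAH analysis, $\tnf$ is biased even when $y_i^t$ and $z_i^t$ are exact, so one cannot run a martingale/telescoping argument directly on $\tnf$. The error must instead be routed through the Lipschitz structure of $f'$, $g$ and $g'$ onto $y_i^t$ and $z_i^t$, and only then can the zero-conditional-mean telescoping identity be applied to each of those two quantities separately. Making the bookkeeping close requires the double sum over inner iterations to be dominated by $(\tau_t/S_t)$ times a single sum — which is precisely why the balanced choice $\tau_t=S_t=\lceil\sqrt n\rceil$ (matching $B_t=n$ against $\tau_tS_t=n$) appears — and requires the step size, though still a constant independent of $\epsilon$ and $n$, to be small enough to absorb the residual error term into the descent coefficient.
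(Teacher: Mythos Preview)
Your proposal is correct and follows essentially the same route as the paper's proof: the same one-step descent inequality (the paper's Lemma~\ref{lemma:descent-general}, eq.~\eqref{lm:descent-general-1}), the same prox-nonexpansiveness bound relating $\mathcal G_\eta$ to $\tilde{\mathcal G}_\eta$ (Lemma~\ref{lemma:grad-mapping-closedness}), the same splitting of $\|\tnf-F'\|^2$ through the Lipschitz constants of $f'$, $g$, $g'$ onto $\|y_i^t-g(x_i^t)\|^2$ and $\|z_i^t-g'(x_i^t)\|^2$ (Lemma~\ref{lemma:MSE-grad-expectation}), and the same SARAH variance recursion with vanishing initial error in the finite-sum case (Lemma~\ref{lemma:sarah-mse} with $y_0^t=g(x_0^t)$, $z_0^t=g'(x_0^t)$). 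The only cosmetic difference is bookkeeping order: the paper first merges the descent and gradient-mapping inequalities into \eqref{lm:descent-general-2} and then sums, whereas you sum the descent inequality first and insert the $\|\mathcal G_\eta\|^2$ bound at the end; both lead to the same quadratic condition $3G_0\eta^2+2L_F\eta-1\le 0$ and the constant~$8$.
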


Similar to the previous section, we can also choose the epoch lengths and
sample sizes adaptively to save the sampling cost in the early stage of 
the algorithm. 
However, due to the finite-sum structure of the problem, when the batch size $B_t$ reaches $n$, we will start to take the full batch at the beginning of each
epoch to get the exact $g(x_0^t)$ and $g'(x_0^t)$. 
This leads to the following theorem.

\begin{theorem}
	\label{theorem:cvg-finite-adaptive} 
    Suppose Assumptions~\ref{assumption:Lip} holds.
    For some positive constants $a>0$ and $0\leq b<\sqrt{n}$, 
    denote $T_0 := \bigl\lceil\frac{\sqrt{n}-b}{a}\bigr\rceil = \cO\bigl(\sqrt{n}\bigr)$.
    When $t\leq T_0$ we set the parameters to be $\tau_t = S_t = \sqrt{B_t} = \lceil at + b\rceil$; when $t>T_0$, we set $\cB_t=\{1,\ldots,n\}$ and $\tau_t = S_t = \bigl\lceil\sqrt{n}\bigr\rceil$.
    Then as long as $\eta\leq\frac{4}{L_F + \sqrt{L_F^2 + 12G_0}}$, 
    \vspace{-1ex}
	\begin{eqnarray}
	\label{thm:cvg-finite-adaptive}
	\E\bigl[\|\mathcal{G}_\eta(\bar{x})\|^2\bigr]  \leq \begin{cases}
	\cO\bigl(\frac{\ln T}{T^2}\bigr) &\mbox{ if } T\leq T_0 \, ,\\
    \cO\bigl(\frac{\ln n}{\sqrt{n}(T-T_0+1)}\bigr) &\mbox{ if } T>T_0 \, .
	\end{cases}
	\end{eqnarray}
	As a result, 
    the total sample complexity of Algorithm~\ref{alg:CIVR} for
    obtaining an $\epsilon$-approximate solution is
    $\tilde{\cO}\bigl(\min\bigl\{\sqrt{n}\epsilon^{-1},\epsilon^{-3/2}\bigr\}\bigr)$, where $\tilde{\cO}(\cdot)$ hides logarithmic factors.
\end{theorem}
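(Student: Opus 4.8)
The plan is to obtain Theorem~\ref{theorem:cvg-finite-adaptive} by splicing, at the crossover epoch $T_0$, the two analyses already carried out for Theorem~\ref{theorem:cvg-general-adaptive} (the growing-batch regime $t\le T_0$) and Theorem~\ref{theorem:cvg-finite-fixed} (the full-batch regime $t>T_0$); note that $T_0=\lceil(\sqrt n-b)/a\rceil$ is exactly the epoch at which $\lceil at+b\rceil^2$ first reaches $n$. The common engine behind Theorems~\ref{theorem:cvg-general-fixed}--\ref{theorem:cvg-finite-fixed} is a per-epoch descent inequality: under $\eta\le 4/(L_F+\sqrt{L_F^2+12G_0})$ and with the choice $\tau_t=S_t$, for every epoch $t$,
\[
\sum_{i=0}^{\tau_t-1}\E\bigl[\|\cG_\eta(x_i^t)\|^2\bigr]
\;\le\;\frac{8}{\eta}\Bigl(\E[\Phi(x_0^t)]-\E[\Phi(x_0^{t+1})]\Bigr)+R_t ,
\]
where $R_t$ is the residual caused by the \emph{inexact} initial estimates $y_0^t,z_0^t$: in the sub-sampled phase it is of order $\tau_t\sigma_0^2/B_t$ (with $\sigma_0^2$ as in~\eqref{defn:G0-sigma0}, now read off the empirical distribution over $\{1,\dots,n\}$), whereas $R_t=0$ once $\cB_t=\{1,\dots,n\}$, since then $y_0^t=g(x_0^t)$ and $z_0^t=g'(x_0^t)$ exactly. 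The only input from the inner sampling sets $\cS_i^t$ is a self-bounding estimate of the accumulated variance of the incremental composite estimator $\tnf(x_i^t)=(z_i^t)^\T f'(y_i^t)$, controlled through $S_t$, the step size and the constant $G_0$; this is precisely the bound established when proving the earlier theorems, so I would cite it rather than re-derive it.

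Summing the descent inequality over $t=1,\dots,T$ telescopes the $\Phi$-terms; since $R_t=0$ for $t>T_0$ and $\Phi(x_0^{T+1})\ge\Phi^*$, one gets for every $T$
\[
\sum_{t=1}^{T}\sum_{i=0}^{\tau_t-1}\E\bigl[\|\cG_\eta(x_i^t)\|^2\bigr]
\;\le\;\frac{8}{\eta}\bigl(\Phi(x_0^1)-\Phi^*\bigr)+\sum_{t=1}^{\min\{T,T_0\}}R_t ,
\]
and with $\tau_t=\lceil at+b\rceil$, $B_t=\lceil at+b\rceil^2$ the residual sum is $\cO\bigl(\sigma_0^2 a^{-1}\ln\frac{aT+b}{a+b}\bigr)$, hence $\cO(\ln n)$ as soon as $T\ge T_0$ (because $T_0=\cO(\sqrt n)$). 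Since $\bar x$ is uniform over all $N_T:=\sum_{t=1}^T\tau_t$ inner iterates, dividing by $N_T$ finishes both cases. For $T\le T_0$, repeating the computation from the proof of Theorem~\ref{theorem:cvg-general-adaptive} (the extra $\sigma_0^2$ factor in $B_t$ only changes constants) and $N_T=\Theta(aT^2/2+bT)$ gives the $\cO(\ln T/T^2)$ bound. For $T>T_0$, the first $T_0$ epochs contribute $N_{T_0}=\Theta(n)$ iterates and each later epoch contributes $\lceil\sqrt n\rceil$, so $N_T=\Theta\bigl(n+\sqrt n(T-T_0)\bigr)=\Theta\bigl(\sqrt n(T-T_0+1)\bigr)$, and the bound becomes $\cO\bigl(\ln n/(\sqrt n(T-T_0+1))\bigr)$, matching~\eqref{thm:cvg-finite-adaptive}.

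Finally I would translate the rates into the sample complexity $\sum_{t=1}^T(B_t+2\tau_tS_t)$. If $\epsilon\gtrsim 1/n$ (up to logarithms), the target is met inside the growing-batch phase after $T=\tilde{\cO}(1/\sqrt\epsilon)\le T_0$ epochs, at cost $\Theta\bigl(\sum_{t\le T}(at+b)^2\bigr)=\tilde{\cO}(\epsilon^{-3/2})$; in this regime $\epsilon^{-3/2}\le\sqrt n\,\epsilon^{-1}$, so this equals $\tilde{\cO}(\min\{\sqrt n\epsilon^{-1},\epsilon^{-3/2}\})$. If $\epsilon\lesssim 1/n$, the algorithm runs through all of phase one at cost $\Theta(n^{3/2})$ and then needs $T-T_0=\tilde{\cO}(1/(\sqrt n\epsilon))$ full-batch epochs at cost $3n$ each, i.e.\ $\tilde{\cO}(\sqrt n\epsilon^{-1})$; since $n^{3/2}\le\sqrt n\epsilon^{-1}$ whenever $\epsilon\le 1/n$, the total is again $\tilde{\cO}(\min\{\sqrt n\epsilon^{-1},\epsilon^{-3/2}\})$. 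The main obstacle is not a new estimate but the bookkeeping at the boundary $t=T_0$: one must verify that the telescoped decrease of $\Phi$ and the $\cO(\ln n)$ residual accumulated over the growing-batch phase are carried intact into the full-batch phase, and that $T_0$ is positioned so that the two denominators $\Theta(aT^2)$ and $\Theta(n+\sqrt n(T-T_0))$ agree up to constants at $T=T_0$; pinning down the constant multiplying $\ln n$ also requires re-running the Theorem~\ref{theorem:cvg-general-adaptive} computation with the finite-sum value of $\sigma_0^2$.
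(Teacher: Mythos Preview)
Your proposal is correct and follows essentially the same route as the paper: split the epochs at $T_0$, invoke the per-epoch descent inequality (equivalently~\eqref{thm:cvg-general-fixed-1}) with residual $R_t=\cO(\sigma_0^2\tau_t/B_t)$ for $t\le T_0$ and $R_t=0$ for $t>T_0$, telescope $\Phi$, bound $\sum_{t\le T_0}\tau_t/B_t=\cO(\ln n)$ and $\sum_{t\le T}\tau_t=\Omega(\sqrt n(T-T_0+1))$, then divide and read off the two rates; the sample-complexity case split $\epsilon\gtrless 1/n$ is also the paper's. The only cosmetic difference is that the paper writes the two phases as two separate sums before adding them, whereas you carry a single telescoped sum with vanishing residuals past $T_0$; your observation that one implicitly needs a finite-sum $\sigma_0^2$ (the paper uses it in~\eqref{thm:cvg-finite-adaptive-1} without restating Assumption~\ref{assumption:variance}) is on point.
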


\section{Fast convergence rates under stronger conditions}
\label{sec:fast-rates}

\vspace{-1ex}

In this section we consider two cases where fast linear convergence can be guaranteed for CIVR. 

\vspace{-1ex}

\subsection{Gradient-dominant function}
The first case is when $r \equiv 0$ and 
$F$ is \emph{$\nu$-gradient dominant}, i.e., there is some $\nu>0$ such that
\begin{equation}
\label{defn:Grd-domi}
F(x) - \inf_yF(y)\leq \frac{\nu}{2}\|F'(x)\|^2, \qquad \forall\, x\in\R^d.
\end{equation}
Note that a $\mu$-strongly convex function is $(1/\mu)$-gradient dominant by this definition. Hence strong convexity is a special case of the gradient dominant condition, which in turn is a special case of the Polyak-{\L}ojasiewicz condition with the {\L}ojasiewicz exponent equal to 2
\citep[see, e.g.,][]{KarimiNutiniSchmidt2016}.

In order to solve~\eqref{prob:main} with a pre-fixed precision~$\epsilon$,
we use a periodic restart strategy depicted below.
\begin{theorem}
	\label{theorem:cvg-KL-general} 
    Consider~\eqref{prob:main} with $r\equiv 0$. Suppose 
    Assumptions~\ref{assumption:Lip} and~\ref{assumption:variance} hold
    and~$F$ is $\nu$-gradient dominant.
    Given any $\epsilon>0$, let 
    $\tau_t = S_t = \bigl\lceil\frac{1}{\sqrt{\epsilon}}\bigr\rceil$,
    $B_t=\bigl\lceil \frac{12\nu\sigma_0^2}{\epsilon} \bigr \rceil$ and 
    $T = \bigl\lceil\frac{16\nu\sqrt{\epsilon}}{\eta}\bigr\rceil$. 
    Then as long as $\eta\leq\frac{4}{L_F + \sqrt{L_F^2 + 12G_0}}$, 
    \vspace{-1ex}
	\begin{equation}
	\label{thm:cvg-KL-general-0}
    \E\bigl[F(\bar{x}) - F^*\bigr] \leq \frac{1}{2}\bigl(F(x_0^1) - F^*\bigr) + \frac{1}{2}\epsilon.
	\end{equation}
    Therefore, we can periodically restart Algorithm~\ref{alg:CIVR} after 
    every~$T$ epochs 
    (using the output of previous period as input to the new period), 
    then $\E[F(\bar{x}) - F^*]$ converges linearly to 
    $\epsilon$ with a factor of $\frac{1}{2}$ per period.
    As a result, the sample complexity for finding an $\epsilon$-solution is
    $\cO\bigl(\bigl(\nu\epsilon^{-1}\bigr)\ln \epsilon^{-1}\bigr)$.
\end{theorem}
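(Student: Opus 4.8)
The plan is to reduce the restart claim to the one-period contraction~\eqref{thm:cvg-KL-general-0}, which in turn follows by feeding the generic nonconvex convergence bound of CIVR into the gradient-dominance inequality~\eqref{defn:Grd-domi}. Throughout, since $r\equiv0$, the proximal gradient mapping in~\eqref{defn:ProxGD} reduces to $\cG_\eta(x)=F'(x)$, so the stationarity measure coincides with $\E[\|F'(\bar x)\|^2]$, and $\Phi=F$.

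\textbf{Step 1: extract a parametrized per-period bound.} From the analysis behind Theorem~\ref{theorem:cvg-general-fixed} I would isolate the inequality that holds for an \emph{arbitrary} deterministic starting point $x_0^1$ when CIVR is run for $T$ epochs with constant choices $\tau_t=S_t=\tau$, $B_t=B$ and step size $\eta\le\frac{4}{L_F+\sqrt{L_F^2+12G_0}}$, which I expect to take the form
\[
\E\bigl[\|F'(\bar x)\|^2\bigr]\;\le\;\frac{8\bigl(F(x_0^1)-F^*\bigr)}{\eta\,T\tau}\;+\;\frac{6\,\sigma_0^2}{B}.
\]
Here the first term is the telescoping of the descent inequality over the $T\tau$ inner iterations, with the within-epoch estimator variance absorbed into the step-size condition (this is where $12G_0$ enters), and the second is the residual error of the outer mini-batch estimates $y_0^t,z_0^t$; specializing $T=\tau=\lceil\epsilon^{-1/2}\rceil$ and $B=\lceil\sigma_0^2/\epsilon\rceil$ reproduces the $(\,\cdot\,+6)\epsilon$ guarantee in~\eqref{thm:cvg-general-fixed}.

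\textbf{Step 2: apply gradient dominance and substitute the parameters.} Since $F(x)-F^*\le\frac{\nu}{2}\|F'(x)\|^2$ holds pointwise, taking expectation over the random output gives $\E[F(\bar x)-F^*]\le\frac{\nu}{2}\E[\|F'(\bar x)\|^2]$, hence
\[
\E\bigl[F(\bar x)-F^*\bigr]\;\le\;\frac{4\nu\bigl(F(x_0^1)-F^*\bigr)}{\eta\,T\tau}\;+\;\frac{3\nu\sigma_0^2}{B}.
\]
With $\tau=\lceil\epsilon^{-1/2}\rceil\ge\epsilon^{-1/2}$ and $T=\lceil16\nu\sqrt\epsilon/\eta\rceil\ge16\nu\sqrt\epsilon/\eta$ we get $T\tau\ge16\nu/\eta$, so the first term is $\le\frac14(F(x_0^1)-F^*)\le\half(F(x_0^1)-F^*)$; with $B=\lceil12\nu\sigma_0^2/\epsilon\rceil\ge12\nu\sigma_0^2/\epsilon$ the second term is $\le\frac{\epsilon}{4}\le\half\epsilon$. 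This is exactly~\eqref{thm:cvg-KL-general-0}.

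\textbf{Step 3: restart recursion and sample count.} Let $\xi^{(0)}=x_0^1$ and let $\xi^{(k)}$ be the output of the $k$-th period of $T$ epochs, which is used as the initial point of period $k+1$. Conditioning~\eqref{thm:cvg-KL-general-0} on the $\sigma$-field $\cF_k$ generated by the first $k$ periods and taking total expectation, $a_k:=\E[F(\xi^{(k)})-F^*]$ obeys $a_{k+1}\le\half a_k+\half\epsilon$, i.e.\ $a_{k+1}-\epsilon\le\half(a_k-\epsilon)$, so $a_k\le\epsilon+2^{-k}(a_0-\epsilon)$: linear convergence to $\epsilon$ with contraction factor $\half$ per period. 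Hence $K=\cO(\ln(a_0/\epsilon))=\cO(\ln\epsilon^{-1})$ periods yield an $\cO(\epsilon)$-solution, and each period costs $T(B+2\tau S)=\cO\bigl((1+\nu\sqrt\epsilon)(\nu\sigma_0^2+1)\epsilon^{-1}\bigr)=\cO(\nu\epsilon^{-1})$ samples once $\eta$, $\sigma_0$ and the condition number $\nu$ are regarded as independent of $\epsilon$; the total is $\cO\bigl((\nu\epsilon^{-1})\ln\epsilon^{-1}\bigr)$. The main obstacle is Step~1: one needs the CIVR bound stated explicitly in $T,\tau,B$ rather than pre-substituted as in Theorem~\ref{theorem:cvg-general-fixed}, and must verify that inflating the outer batch size by the factor $\nu$ and lengthening the run to $T\tau=\Theta(\nu/\eta)$ inner steps is precisely what converts the generic $\cO(1/(T\tau))+\cO(1/B)$ guarantee into a $\half$-contraction towards $\epsilon$; Steps~2 and~3 are then routine.
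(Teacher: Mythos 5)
Your proposal is correct and follows essentially the same route as the paper: it isolates the intermediate bound $\E[\|\cG(\bar x)\|^2]\le \frac{8(\Phi(x_0^1)-\Phi^*)}{\eta T\tau}+\frac{6\sigma_0^2}{B}$ from the proof of Theorem~\ref{theorem:cvg-general-fixed} (which the paper indeed states explicitly before substituting parameters), applies gradient dominance with the stated $\tau$, $B$, $T$ to get the $\tfrac12$-contraction plus $\tfrac12\epsilon$, and then runs the same restart recursion and sample count. The only cosmetic difference is that you keep the factor $\nu/2$ (getting $\tfrac14$-factors) where the paper loosely uses $\nu$, which changes nothing in the conclusion.
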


The restart strategy also applies to the finite-sum case. 

\begin{theorem}
	\label{theorem:cvg-KL-finite} 
    Consider problem~\eqref{prob:main-finite} with $r\equiv 0$.
    Suppose Assumption~\ref{assumption:Lip} hold and $F$ is $\nu$-gradient dominant.
    If we set $\tau_t = S_t = \sqrt{B_t} = \lceil\sqrt{n}\rceil$ and
    $T = \bigl\lceil\frac{16\nu}{\sqrt{n}\eta}\bigr\rceil$,
    then as long as $\eta\leq\frac{4}{L_F + \sqrt{L_F^2 + 12G_0}}$, 
    \vspace{-1ex}
	\begin{equation}
	\label{thm:cvg-KL-finite-0}
	\E\bigl[F(\bar{x}) - F^*\bigr] \leq \frac{1}{2}\bigl(F(x_0^1) - F^*\bigr)\,.
	\end{equation}
	By periodically restart Algorithm~\ref{alg:CIVR} after every~$T$ epochs, 
    $\E[F(\bar{x}) - F^*]$ converges linearly to $0$. 
    As a result, the sample complexity for finding an $\epsilon$-solution is
    $\cO\bigl(\bigl(n+\frac{\nu\sqrt{n}}{\eta}\bigr)\ln\frac{1}{\epsilon}\bigr)$.
\end{theorem}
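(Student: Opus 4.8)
The plan is to obtain the single-period contraction~\eqref{thm:cvg-KL-finite-0} directly from Theorem~\ref{theorem:cvg-finite-fixed} combined with the gradient-dominant inequality~\eqref{defn:Grd-domi}, and then to upgrade it to global linear convergence by a routine iterated-expectation argument across the restart periods.

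First I would invoke Theorem~\ref{theorem:cvg-finite-fixed}: under Assumption~\ref{assumption:Lip}, with $\cB_t=\{1,\ldots,n\}$ and $\tau_t=S_t=\lceil\sqrt{n}\rceil$ for all $t$, running $T$ epochs from an initial point $x_0^1$ yields $\E[\|\mathcal{G}_\eta(\bar{x})\|^2]\leq 8\bigl(\Phi(x_0^1)-\Phi^*\bigr)/(\eta\sqrt{n}T)$. Since $r\equiv0$ here, we have $\Phi=F$ and $\mathcal{G}_\eta(\bar x)=F'(\bar x)$ for every $\eta>0$, so this reads $\E[\|F'(\bar x)\|^2]\leq 8\bigl(F(x_0^1)-F^*\bigr)/(\eta\sqrt{n}T)$, where $F^*=\inf_y F(y)>-\infty$ by the last item of Assumption~\ref{assumption:Lip}. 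Applying the $\nu$-gradient-dominant bound~\eqref{defn:Grd-domi} pointwise to $\bar x$ and taking expectations gives $\E[F(\bar x)-F^*]\leq \tfrac{\nu}{2}\,\E[\|F'(\bar x)\|^2]\leq 4\nu\bigl(F(x_0^1)-F^*\bigr)/(\eta\sqrt{n}T)$. With the prescribed $T=\lceil 16\nu/(\sqrt{n}\eta)\rceil\geq 16\nu/(\sqrt{n}\eta)$, the coefficient $4\nu/(\eta\sqrt{n}T)$ is at most $\tfrac14\leq\tfrac12$, which establishes~\eqref{thm:cvg-KL-finite-0} (in fact with a sharper factor $\tfrac14$, reflecting slack in the constant $16$).

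Next I would set up the restart bookkeeping. Let $x^{(0)}:=x_0^1$ and let $x^{(k)}$ denote the random output $\bar x$ of the $k$-th period of $T$ epochs, which is passed in as the initial point of period $k+1$. Conditioning on $x^{(k-1)}$ turns it into a deterministic initial point, so~\eqref{thm:cvg-KL-finite-0} applies conditionally, $\E\bigl[F(x^{(k)})-F^*\mid x^{(k-1)}\bigr]\leq\tfrac12\bigl(F(x^{(k-1)})-F^*\bigr)$; taking total expectations and iterating over $k$ gives $\E[F(x^{(k)})-F^*]\leq 2^{-k}\bigl(F(x_0^1)-F^*\bigr)$, so $k=\cO(\log(1/\epsilon))$ periods suffice to reach $\E[F(\bar x)-F^*]\leq\epsilon$. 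For the sample cost, each epoch costs $B+2\tau S$ evaluations with $B=n$ and $\tau=S=\lceil\sqrt{n}\rceil$, i.e.\ $\cO(n)$; hence each period costs $\cO(nT)=\cO\bigl(n+\nu\sqrt{n}/\eta\bigr)$ using $T=\lceil16\nu/(\sqrt{n}\eta)\rceil=\cO(1+\nu/(\sqrt{n}\eta))$, and multiplying by the $\cO(\log(1/\epsilon))$ periods gives the claimed $\cO\bigl((n+\nu\sqrt{n}/\eta)\log(1/\epsilon)\bigr)$.

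The only step that requires any care is the handling of the random output $\bar x$ under restarting: one must condition on the previous period's output before applying the per-period bound, which is legitimate precisely because Theorem~\ref{theorem:cvg-finite-fixed} is valid for an arbitrary initial point (deterministic once conditioned). Beyond this, there is no real obstacle; all of the analytical difficulty is already absorbed into Theorem~\ref{theorem:cvg-finite-fixed}, and the present argument is a short reduction.
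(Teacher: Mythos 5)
Your proposal is correct and takes essentially the same route as the paper's proof: invoke the finite-sum epoch bound of Theorem~\ref{theorem:cvg-finite-fixed} (with $\Phi=F$ and $\mathcal{G}_\eta=F'$ since $r\equiv 0$), combine it with the gradient-dominance inequality, plug in $T=\lceil 16\nu/(\sqrt{n}\eta)\rceil$ to obtain the per-period contraction, and then iterate over restart periods to get linear convergence and the stated sample complexity. The only differences are cosmetic: you keep the sharper constant $\nu/2$ from~\eqref{defn:Grd-domi} (yielding a factor $1/4$ where the paper settles for $1/2$), and you spell out the conditioning on the previous period's random output, which the paper handles implicitly by deferring to the restart argument in Theorem~\ref{theorem:cvg-KL-general}.
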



\subsection{Optimally strongly convex function}

In this part, we assume a \emph{$\mu$-optimally strongly convex} condition 
on the function $\Phi(x)=F(x)+r(x)$, i.e., there exists a $\mu>0$ such that
\vspace{-0.5ex}
\be
\label{defn:O-S-CVX}
\Phi(x) - \Phi(x^*)\geq \frac{\mu}{2}\|x-x^*\|^2, \qquad \forall x\in\R^d .
\ee 
We have the following two results for solving problems~\eqref{prob:main}
and~\eqref{prob:main-finite} respectively.
\begin{theorem}
	\label{theorem:cvg-O-S-CVX-general}
	Consider problem~\eqref{prob:main}. 
    Suppose Assumptions~\ref{assumption:Lip} and~\ref{assumption:variance} hold
    and $\Phi$ is $\mu$-optimally strongly convex.
    We set $\tau_t = S_t = \bigl\lceil \frac{1}{\sqrt{\epsilon}} \bigr\rceil$,
    $B_t=\bigl\lceil \frac{9\sigma_0^2}{2\mu\epsilon} \bigr\rceil$
    and $T = \lceil \frac{5\sqrt{\epsilon}}{\mu\eta} \rceil$. 
    Then if we choose $\eta<\frac{2}{L_F + \sqrt{L_F^2 + 36G_0}}$,
    \vspace{-1ex}
	\begin{equation}
	\label{thm:cvg-O-S-CVX-general-0}
	\E\bigl[\Phi(\bar{x}) - \Phi^*\bigr] \leq \frac{1}{2}\bigl(\Phi(x_0^1) - \Phi^*\bigr) + \frac{1}{2}\epsilon.
	\end{equation}
	By periodically restart Algorithm~\ref{alg:CIVR} after every~$T$ epochs, 
    $\E[\Phi(\bar{x}) - \Phi^*]$ converges linearly to $\epsilon$
    As a result, the sample complexity for finding an $\epsilon$-solution is
    $\cO\bigl(\mu^{-1}\epsilon^{-1}\ln \epsilon^{-1}\bigr)$.
\end{theorem}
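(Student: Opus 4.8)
The plan is to mirror the two-stage argument of the gradient-dominant case (Theorem~\ref{theorem:cvg-KL-general}): first show that one ``period'' of $T$ epochs contracts the expected optimality gap as in~\eqref{thm:cvg-O-S-CVX-general-0}, then iterate this estimate by periodically restarting Algorithm~\ref{alg:CIVR} and summing a geometric series. Relative to the nonconvex analysis behind Theorems~\ref{theorem:cvg-general-fixed}--\ref{theorem:cvg-general-adaptive}, the only new ingredient is a proximal Polyak--{\L}ojasiewicz (PL) type contraction of the function gap along the prox-gradient iteration, extracted from $\mu$-optimal strong convexity. In the special case $r\equiv0$ this is already contained in Theorem~\ref{theorem:cvg-KL-general}, since a convex function obeying $F(x)-F^*\ge\frac{\mu}{2}\|x-x^*\|^2$ also satisfies $F(x)-F^*\le\frac{2}{\mu}\|F'(x)\|^2$, i.e. is $\nu$-gradient dominant with $\nu=\Theta(1/\mu)$; the content below is in making this work for nontrivial convex~$r$.

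Step 1 (per-step contraction). Combining the smoothness descent inequality for $F$ with the optimality conditions of the prox map, I would first show that an exact prox-gradient step on a convex $\Phi$ satisfying~\eqref{defn:O-S-CVX} reduces the gap by at least a fixed factor $1-c_0\mu\eta<1$ whenever $\eta$ lies in the stated range; this is the standard linear-convergence-under-proximal-PL fact. The CIVR update replaces $F'(x_i^t)$ by the biased composite estimate $\tnf(x_i^t)=(z_i^t)^T f'(y_i^t)$, and carrying the resulting perturbation through yields a recursion of the schematic form
\[
\E\bigl[\Phi(x_{i+1}^t)-\Phi^*\bigr]\le(1-c_0\mu\eta)\,\E\bigl[\Phi(x_i^t)-\Phi^*\bigr]+c_1\,\eta\,\E\bigl[\|\tnf(x_i^t)-F'(x_i^t)\|^2\bigr],
\]
where the error term is exactly the composite-estimator variance already controlled in the proof of Theorems~\ref{theorem:cvg-general-fixed}--\ref{theorem:cvg-general-adaptive}; the restriction $\eta<2/(L_F+\sqrt{L_F^2+36G_0})$ (stricter than in the nonconvex case) together with the choice $\tau_t=S_t$ is what keeps the inner-loop accumulation of this variance dominated by the base term $\sigma_0^2/B_t$.

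Step 2 (single-period bound). Unrolling the recursion of Step~1 over $i=0,\dots,\tau_t-1$ and then over $t=1,\dots,T$, and averaging over all $N=\sum_t\tau_t=T\tau$ iterates (which is how $\bar x$ is drawn), the geometric factors sum to at most $1/(c_0\mu\eta)$ and produce a bound of the form $\E[\Phi(\bar x)-\Phi^*]\le\frac{\Phi(x_0^1)-\Phi^*}{c_0\mu\eta N}+\frac{C_2\sigma_0^2}{\mu B}$. With $\tau=S=\lceil\epsilon^{-1/2}\rceil$, the choice $T=\lceil 5\sqrt\epsilon/(\mu\eta)\rceil$ gives $N=T\tau\ge 5/(\mu\eta)$, which drives the first term below $\frac12(\Phi(x_0^1)-\Phi^*)$, while $B=\lceil 9\sigma_0^2/(2\mu\epsilon)\rceil$ drives the residual below $\frac12\epsilon$; this is precisely~\eqref{thm:cvg-O-S-CVX-general-0}. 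The numerical constants $5$ and $9/2$ and the step-size bound are chosen to absorb the slack accumulated in Step~1, in particular the constant relating quadratic growth to the PL-type decrease and the constant in the variance recursion.

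Step 3 (restart and complexity; main obstacle). Feeding the output of one period as the input $x_0^1$ of the next and iterating~\eqref{thm:cvg-O-S-CVX-general-0} gives $\E[\Phi(\bar x)-\Phi^*]\le 2^{-k}(\Phi(x_0^1)-\Phi^*)+\epsilon$ after $k$ periods, so $k=\cO(\log\epsilon^{-1})$ periods bring the gap to $\cO(\epsilon)$; since one period costs $T(B+2\tau S)=\cO(\mu^{-1}\epsilon^{-1})$ samples (for $\epsilon$ small enough that $T=1$ this is $B+2\tau S$, and it is smaller otherwise), the total sample complexity is $\cO(\mu^{-1}\epsilon^{-1}\log\epsilon^{-1})$. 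I expect the delicate point to be Step~1: promoting the merely quadratic-growth condition~\eqref{defn:O-S-CVX} (which by itself controls only $\|x-x^*\|$) to a genuine per-step contraction of $\Phi(\cdot)-\Phi^*$ requires convexity of $\Phi$ and the prox optimality conditions rather than a plain Cauchy--Schwarz argument, and one must then reconcile the constant coming from that step with the variance recursion so that the prescribed batch size $B$ and the tighter step-size bound actually suffice; the remainder is a re-assembly of the nonconvex descent/variance machinery plus the geometric-series bookkeeping of the restart.
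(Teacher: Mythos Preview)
Your route is viable but genuinely different from the paper's. The paper does \emph{not} pass through a proximal--PL inequality or a per-step contraction of $\Phi(x_i^t)-\Phi^*$. Instead it invokes the three-point prox inequality of Xiao--Zhang (stated as Lemma~\ref{lemma:L.Xiao-T.Zhang}): setting $y=x^*$ and expanding $\|x_{i+1}^t-x^*\|^2$ gives, after Young's inequality and one use of~\eqref{defn:O-S-CVX},
\[
\|x_{i+1}^t-x^*\|^2 \le \|x_i^t-x^*\|^2 - \eta\bigl(\Phi(x_{i+1}^t)-\Phi^*\bigr) + \tfrac{2\eta}{\mu}\|\tnf(x_i^t)-F'(x_i^t)\|^2.
\]
Adding $2\mu$ times this to the descent inequality~\eqref{lm:descent-general-1} produces a Lyapunov function $\Phi(\cdot)+2\mu\|\cdot-x^*\|^2$ that telescopes over the epoch; the combined variance coefficient is $\tfrac{9\eta}{2}$ (hence the $36G_0$ in the step-size bound and the $9/2$ in $B_t$), and bounding the initial Lyapunov value via~\eqref{defn:O-S-CVX} gives $\Phi(x_0^1)+2\mu\|x_0^1-x^*\|^2\le 5(\Phi(x_0^1)-\Phi^*)$ (hence the $5$ in $T$). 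The rest is the same restart bookkeeping you describe in Step~3.

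What each approach buys: the paper's Lyapunov combination uses~\eqref{defn:O-S-CVX} only once, at the very end, and never needs to promote quadratic growth to a PL-type lower bound on $\|\cG_\eta(x)\|^2$; this is exactly the ``delicate point'' you flag, and the paper simply avoids it. Your approach would work, but (i) you would need to supply the proximal-PL inequality $\Phi(x)-\Phi^*\le C\|\cG_\eta(x)\|^2$ from~\eqref{defn:O-S-CVX} plus convexity (a Karimi--Nutini--Schmidt type argument, not available from the paper's lemmas), (ii) your schematic recursion in Step~1 drops the negative $-\bigl(\tfrac{1}{4\eta}-\tfrac{L_F}{2}\bigr)\|x_{i+1}^t-x_i^t\|^2$ term from~\eqref{lm:descent-general-2}, which is precisely what absorbs the SARAH accumulation $\sum_r\|x_r^t-x_{r-1}^t\|^2$ when you sum over the epoch, and (iii) the variance coefficient coming out of~\eqref{lm:descent-general-2} is $\tfrac{3\eta}{4}$, not $\tfrac{9\eta}{2}$, so your approach would naturally yield the nonconvex step-size bound $4/(L_F+\sqrt{L_F^2+12G_0})$ and different numerical constants in $T$ and $B_t$ rather than the ones stated; your attribution of the $36G_0$ to the PL step is a back-fit that does not match your own recursion.
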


\begin{theorem}
	\label{theorem:cvg-O-S-CVX-finite}
	Consider the finite-sum problem~\eqref{prob:main-finite}. 
    Suppose Assumption~\ref{assumption:Lip} hold and $\Phi$ is $\mu$-optimally
    strongly convex.
    We set $\tau_t = S_t = \sqrt{B_t} = \lceil\sqrt{n}\rceil$ 
    and $T = \bigl \lceil \frac{5}{\sqrt{n}\mu\eta} \bigr\rceil$.
    If $\eta<\frac{2}{L_F + \sqrt{L_F^2 + 36G_0}}$, then
    \vspace{-1ex}
	\begin{equation}
	\label{thm:cvg-O-S-CVX-finite-0}
	\E\bigl[\Phi(\bar{x}) - \Phi^*\bigr] \leq \frac{1}{2}\bigl(\Phi(x_0^1) - \Phi^*\bigr).
	\end{equation}
	By periodically restart Algorithm~\ref{alg:CIVR} after every $T$ epochs, 
    $\E[\Phi(\bar{x}) - \Phi^*]$ converges linearly to $0$ with rate $\frac{1}{2}$. 
    Therefore, the sample complexity of finding an $\epsilon$-solution is
    $\cO\bigl(\bigl(n+\frac{\sqrt{n}}{\mu\eta}\bigr)\ln \frac{1}{\epsilon}\bigr)$.
\end{theorem}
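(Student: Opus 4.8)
The plan is to piggyback on the per-epoch descent analysis already used for Theorems~\ref{theorem:cvg-finite-fixed} and~\ref{theorem:cvg-finite-adaptive}, and then add one new ingredient that converts a bound on the proximal-gradient mapping into a bound on the optimality gap under $\mu$-optimal strong convexity. First I would recall the within-epoch estimate specialized to the finite-sum case. Because $\cB_t=\{1,\dots,n\}$ makes $y_0^t=g(x_0^t)$ and $z_0^t=g'(x_0^t)$ \emph{exact}, the only estimation error is generated by the inner incremental updates, and the SARAH/\textsc{Spider} recursion bounds $\E\|y_i^t-g(x_i^t)\|^2+\E\|z_i^t-g'(x_i^t)\|^2$ by $\cO(\ell_g^2 S_t^{-1})\sum_{j\le i}\E\|x_j^t-x_{j-1}^t\|^2$; Lipschitz continuity of $f,f',g,g'$ then turns this into a bound on $\E\|\tnf(x_i^t)-F'(x_i^t)\|^2$ in terms of $G_0$ and the step lengths. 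Summing the descent lemma over $i=0,\dots,\tau_t-1$ with $\tau_t=S_t=\lceil\sqrt n\rceil$ (so $\tau_t/S_t=1$, which is exactly what keeps the accumulated error at order $G_0\eta^2\sum_i\E\|\cG_\eta(x_i^t)\|^2$ and hence absorbable, since $\eta<\tfrac{2}{L_F+\sqrt{L_F^2+36G_0}}\le\tfrac{1}{3\sqrt{G_0}}$ forces $G_0\eta^2\le\tfrac19$) yields a clean per-epoch inequality of the form $\tfrac{\eta}{C_1}\sum_{i=0}^{\tau_t-1}\E\|\cG_\eta(x_i^t)\|^2\le\E[\Phi(x_0^t)-\Phi(x_{\tau_t}^t)]$. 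Telescoping across the $T$ epochs of one restart period and dividing by the number of candidate points $\lceil\sqrt n\rceil T$ (over which $\bar x$ is drawn uniformly) gives $\E\|\cG_\eta(\bar x)\|^2\le \dfrac{C_1\bigl(\Phi(x_0^1)-\Phi^*\bigr)}{\eta\sqrt n\,T}$.

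The second step is the genuinely new one. I would establish a lemma stating that, under Assumption~\ref{assumption:Lip} and the $\mu$-optimal strong convexity~\eqref{defn:O-S-CVX}, every $x\in\dom r$ satisfies $\Phi(x)-\Phi^*\le\tfrac{c}{\mu}\|\cG_\eta(x)\|^2$ whenever $\eta\le 1/L_F$ (which holds here since $\tfrac{2}{L_F+\sqrt{L_F^2+36G_0}}\le\tfrac1{L_F}$). This is the proximal analogue of ``strong convexity implies the Polyak--{\L}ojasiewicz inequality'': with $x^+=\prox_r^{\eta}(x-\eta F'(x))$ and $\cG_\eta(x)=\eta^{-1}(x-x^+)$, the standard three-point inequality for the prox step together with convexity of $r$ and $L_F$-smoothness of $F$ gives $\Phi(x^+)\le\Phi^*+\iprod{\cG_\eta(x)}{x-x^*}-\tfrac{\eta}{2}\|\cG_\eta(x)\|^2$; bounding $\|x-x^*\|\le\sqrt{2(\Phi(x)-\Phi^*)/\mu}$ from optimal strong convexity and controlling $\Phi(x)-\Phi(x^+)$ in terms of $\|\cG_\eta(x)\|^2$ (again via smoothness), a short rearrangement yields the claim with an explicit $c=\cO(1)$; alternatively this is exactly the proximal-PL property, which follows from quadratic growth plus smoothness \citep[cf.][]{KarimiNutiniSchmidt2016}. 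Combining with Step~1, $\E[\Phi(\bar x)-\Phi^*]\le\tfrac{c}{\mu}\E\|\cG_\eta(\bar x)\|^2\le \dfrac{cC_1\bigl(\Phi(x_0^1)-\Phi^*\bigr)}{\mu\eta\sqrt n\,T}$.

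Finally I would choose $T$. With $T=\bigl\lceil\tfrac{5}{\sqrt n\,\mu\eta}\bigr\rceil$ we have $\sqrt n\,T\ge\tfrac{5}{\mu\eta}$, so the prefactor above is at most $\tfrac{cC_1}{5}$; the constant $5$ in the definition of $T$ is chosen (in tandem with the stricter step size $\eta<\tfrac{2}{L_F+\sqrt{L_F^2+36G_0}}$, which keeps $C_1$ small) precisely so that $\tfrac{cC_1}{5}\le\tfrac12$, which is~\eqref{thm:cvg-O-S-CVX-finite-0}. Restarting $K$ times, feeding each period's output as the next period's $x_0^1$, then gives $\E[\Phi(\bar x)-\Phi^*]\le 2^{-K}\bigl(\Phi(x_0^1)-\Phi^*\bigr)$, so $K=\cO(\log\tfrac1\epsilon)$ periods suffice for an $\epsilon$-solution; since each epoch costs $B_t+2\tau_t S_t=n+2n=\cO(n)$ samples, a period costs $\cO(nT)=\cO\bigl(n+\tfrac{\sqrt n}{\mu\eta}\bigr)$ and the total is $\cO\bigl((n+\tfrac{\sqrt n}{\mu\eta})\log\tfrac1\epsilon\bigr)$. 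The main obstacle is Step~1: controlling the \emph{biased} composite estimator $\tnf(x_i^t)=(z_i^t)^Tf'(y_i^t)$ so that its accumulated error over an epoch is truly lower order — the choice $\tau_t=S_t=\sqrt n$ and the step-size restriction are what make the absorption go through, and it is the exact warm start $y_0^t=g(x_0^t)$, $z_0^t=g'(x_0^t)$ that removes the residual $\tfrac12\epsilon$ term appearing in the expectation counterpart, Theorem~\ref{theorem:cvg-O-S-CVX-general}; Step~2 is mostly bookkeeping once the proximal-PL lemma is in hand.
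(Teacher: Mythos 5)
Your Step~1 is fine — it is exactly the paper's finite-sum analysis (the per-epoch telescoping that yields $\E\bigl[\|\cG_\eta(\bar x)\|^2\bigr]\le 8\bigl(\Phi(x_0^1)-\Phi^*\bigr)/(\eta\sqrt{n}\,T)$). The gap is in Step~2. The lemma you propose, $\Phi(x)-\Phi^*\le \frac{c}{\mu}\|\cG_\eta(x)\|^2$ with $c=\cO(1)$ under quadratic growth, smoothness and $\eta\le 1/L_F$, is false once $r\not\equiv 0$. For a composite objective the optimality gap near the solution scales \emph{linearly}, not quadratically, in the gradient-mapping norm: take the one-dimensional example $\Phi(x)=\frac{\mu}{2}x^2+M|x|$ (i.e.\ $F(x)=\frac{\mu}{2}x^2$, $r=M|\cdot|$, exactly the $\ell_1$-regularized setting of the paper's experiments). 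For small $x>0$ the prox-gradient point is $x^+=0$, so $\|\cG_\eta(x)\|=x/\eta$, while $\Phi(x)-\Phi^*\approx Mx$; hence $\bigl(\Phi(x)-\Phi^*\bigr)/\|\cG_\eta(x)\|^2\to\infty$ as $x\to 0$ and no finite $c$ works. What quadratic growth plus convexity does give is an error bound $\dist(x,X^*)\lesssim\|\cG_\eta(x)\|$ and the Karimi--Nutini--Schmidt proximal-PL inequality in terms of the quantity $\mathcal{D}_r(x,L)$; but $\mathcal{D}_r\ge\|\cG\|^2$ only in the unhelpful direction, so you cannot substitute $\|\cG_\eta\|^2$ for it. This is precisely why the paper restricts the ``convert a gradient-norm bound into a function-gap bound'' route to the gradient-dominant case with $r\equiv 0$ (Theorem~\ref{theorem:cvg-KL-finite}), and handles the optimally-strongly-convex case by a different argument: it applies Lemma~\ref{lemma:L.Xiao-T.Zhang} with the \emph{estimated} gradient $v=\tnf(x_i^t)$, derives the recursion \eqref{eqn:C-SAGA-result} for $\|x_{i+1}^t-x^*\|^2$, adds $2\mu$ times it to the descent inequality \eqref{lm:descent-general-1} to form the Lyapunov function $\Phi(x)+2\mu\|x-x^*\|^2$, and telescopes; the MSE term then enters with coefficient $\frac{9}{2}\eta$, which is what forces the stricter step size $\eta<\frac{2}{L_F+\sqrt{L_F^2+36G_0}}$, and the constant $5$ comes from $\Phi(x_0^1)-\Phi^*+2\mu\|x_0^1-x^*\|^2\le 5\bigl(\Phi(x_0^1)-\Phi^*\bigr)$ via optimal strong convexity.

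Two smaller points. Even if your conversion lemma were available, the numerology you invoke would not give the stated factor $\frac12$: the best possible constant is $c=\frac12$ already in the smooth strongly convex case, and with $C_1=8$ you get $cC_1/5=0.8$, i.e.\ a contraction factor $0.8$ per period — same complexity order, but not inequality \eqref{thm:cvg-O-S-CVX-finite-0} as claimed. Also, both your route and the paper's need convexity of $F$ (the three-point lemma assumes it), not just the quadratic-growth condition \eqref{defn:O-S-CVX}; this should be stated explicitly rather than folded into ``Assumption~\ref{assumption:Lip} plus optimal strong convexity.''
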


If we define a condition number $\kappa=L_F/\mu$, then
since $\eta=\Theta(1/L_F)$, we have $1/(\mu\eta)=O(\kappa)$ and the above
complexities become
$\cO\bigl(\bigl(\kappa\epsilon^{-1}\bigr)\ln\epsilon^{-1}\bigr)$ 
and $\cO\bigl(\bigl(n+\kappa n^{1/2}\bigr)\ln\epsilon^{-1}\bigr)$.

\section{Numerical Experiments}
\label{sec:experiments}

\begin{figure*}[t]
	\centering 
	\includegraphics[width=0.325\linewidth]{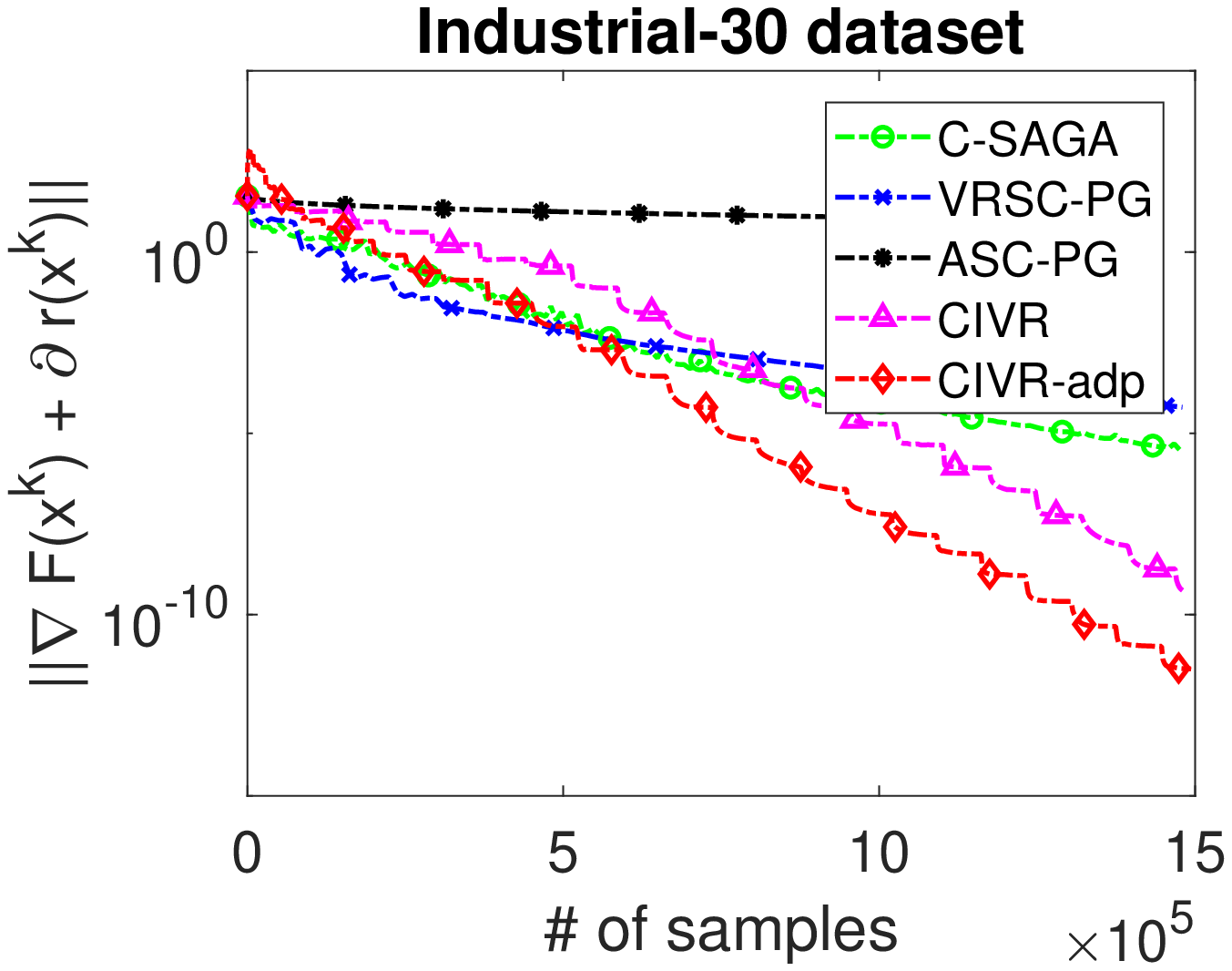}
	\includegraphics[width=0.325\linewidth]{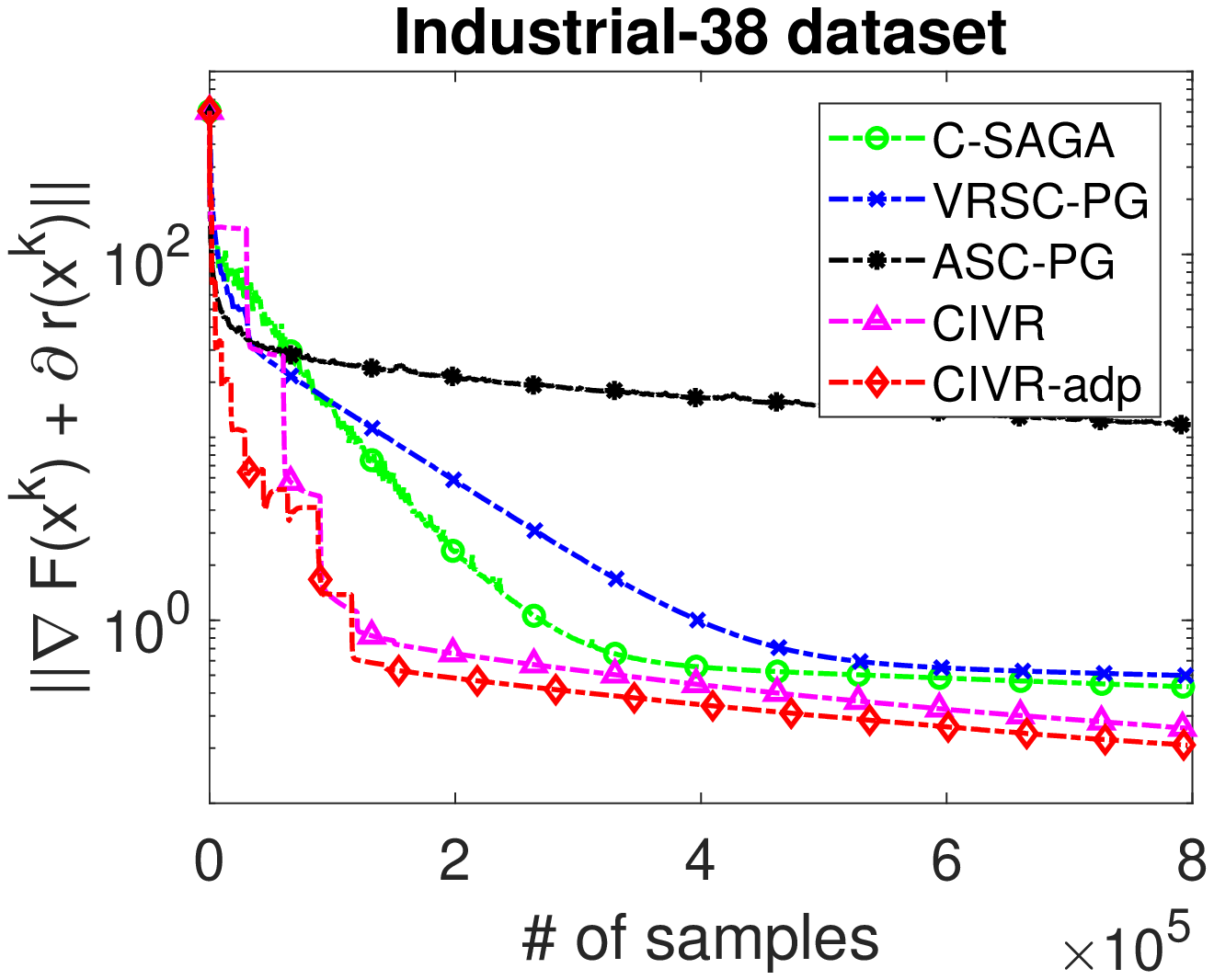}
    \includegraphics[width=0.325\linewidth]{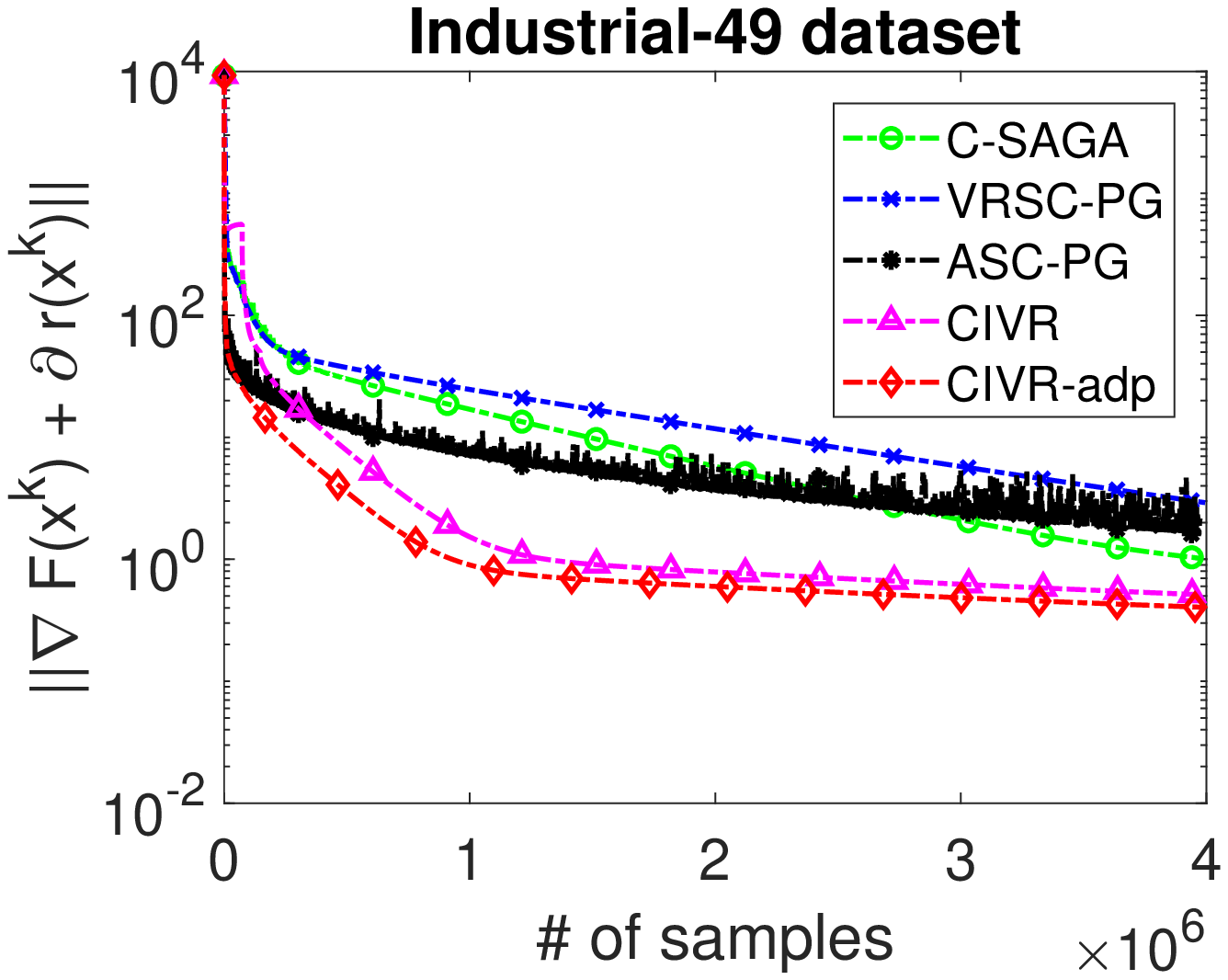}\\[1ex]
	\vfill  
	\includegraphics[width=0.325\linewidth]{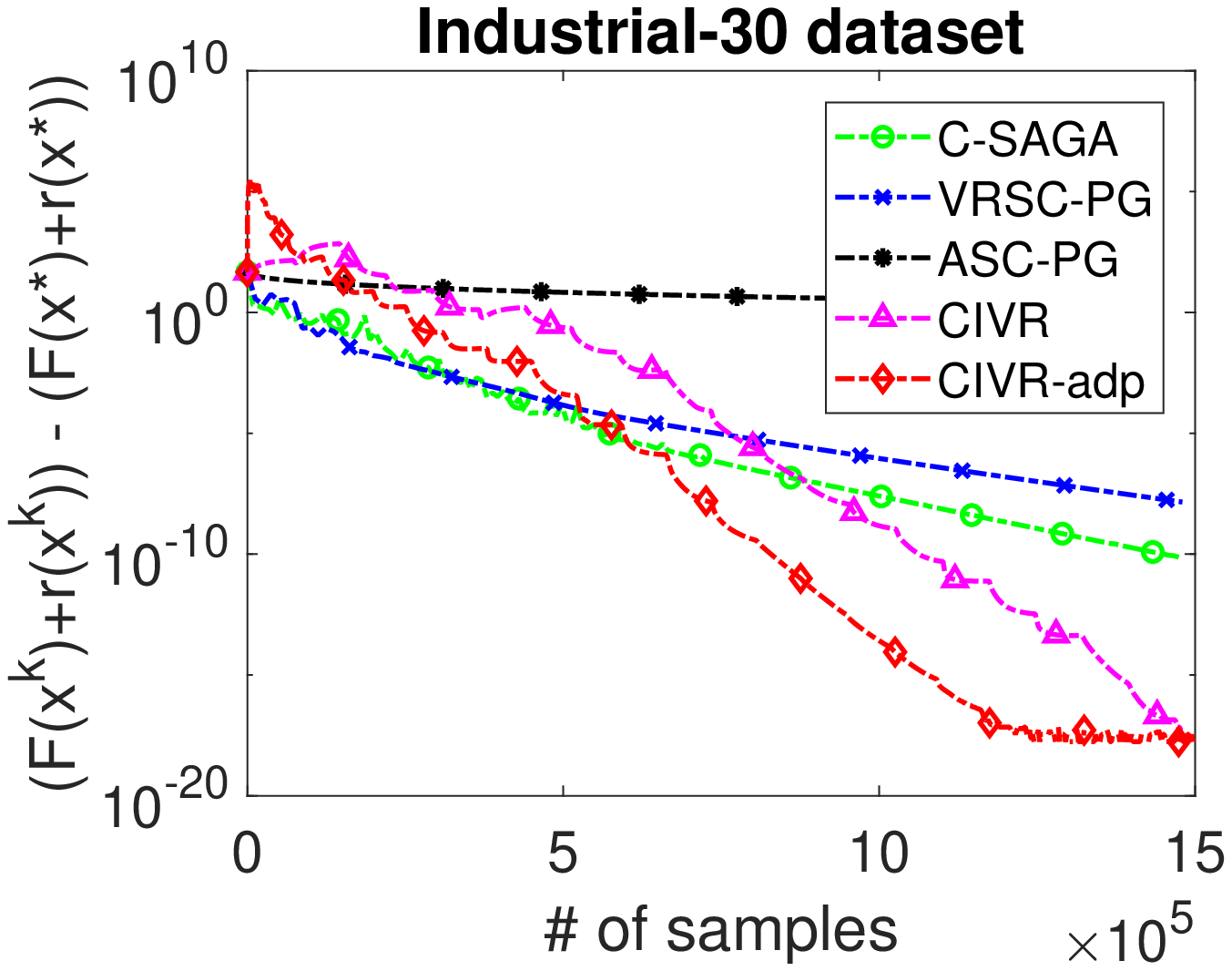}
    \includegraphics[width=0.325\linewidth]{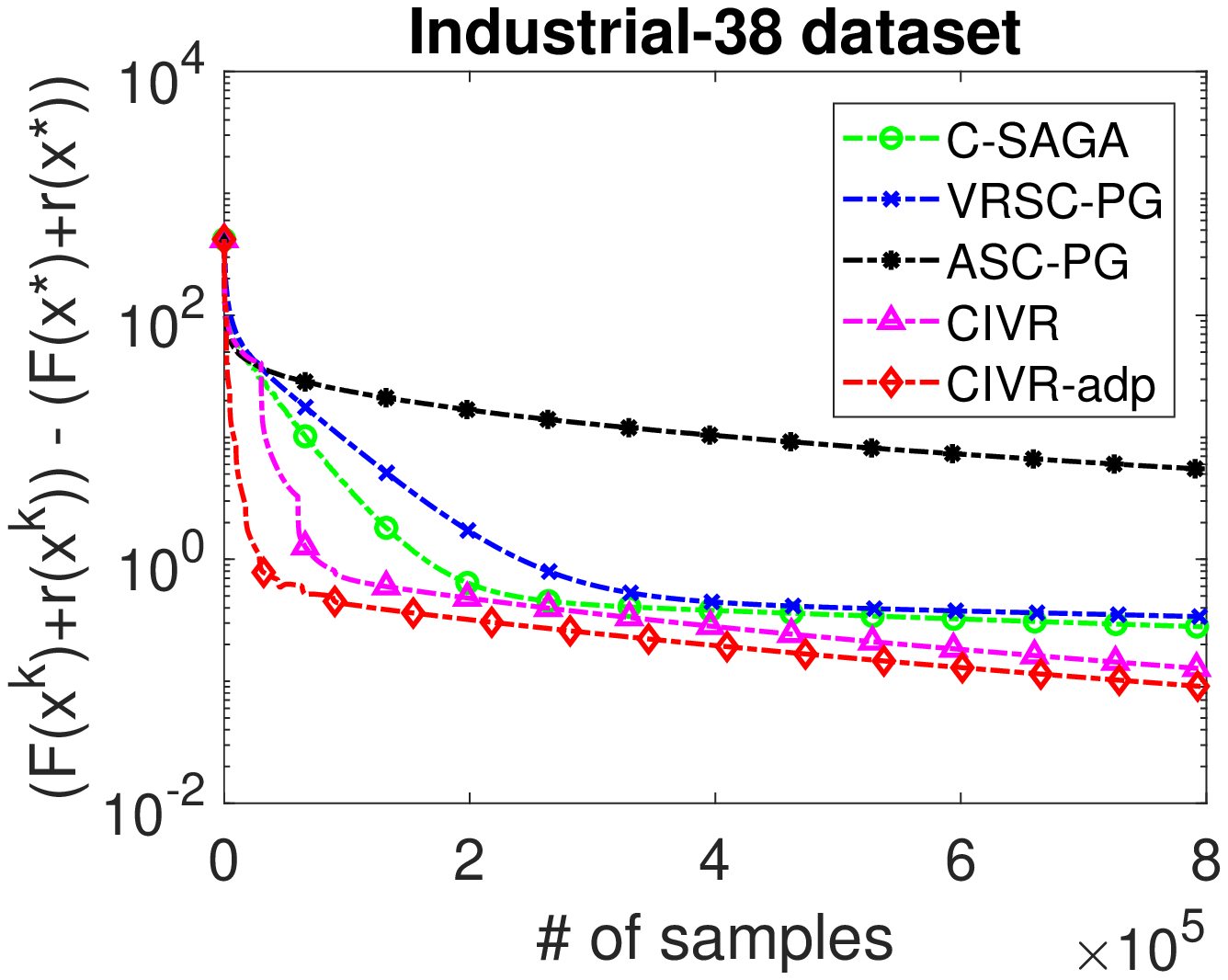}
	\includegraphics[width=0.325\linewidth]{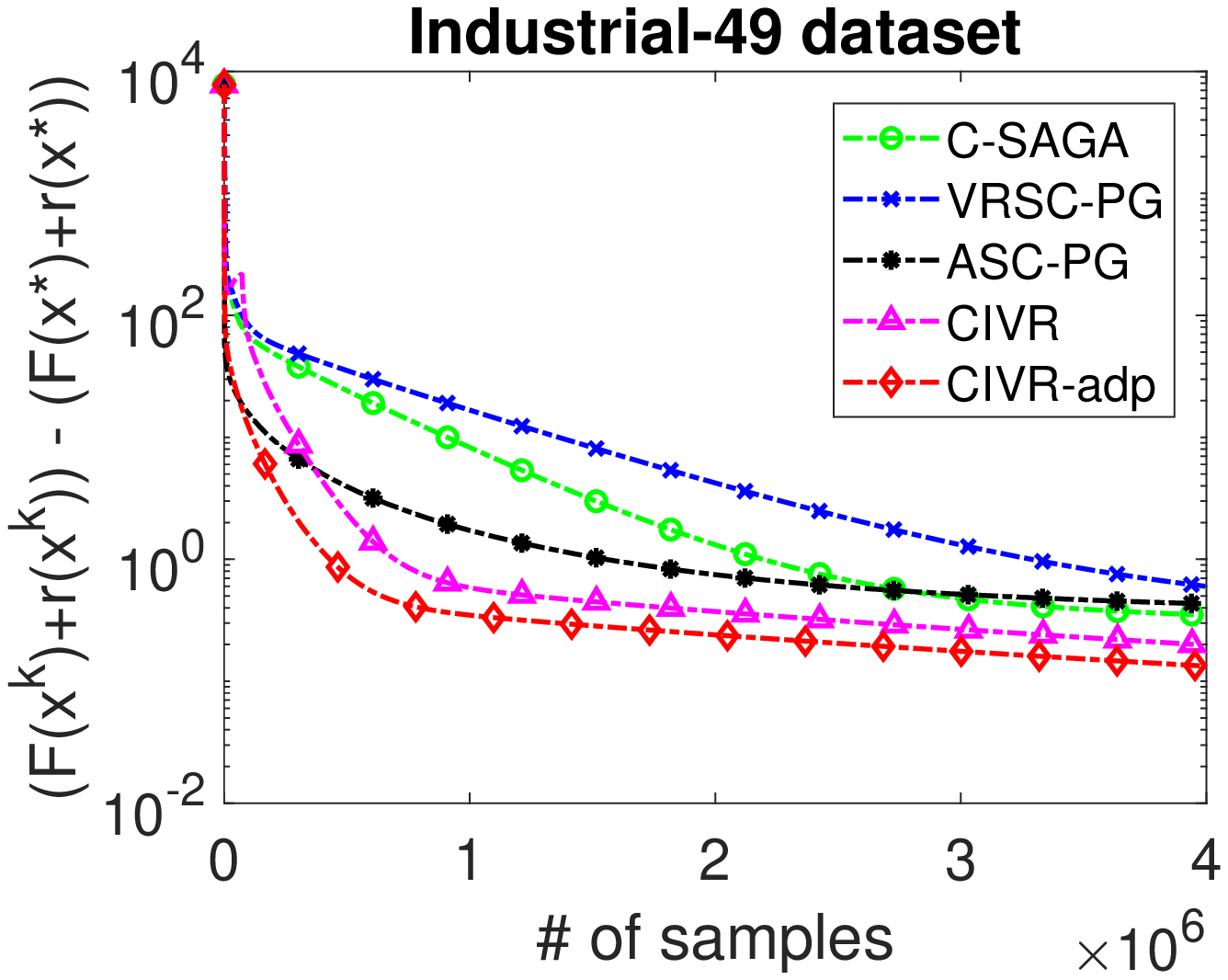}
    \caption{Experiments on the risk-averse portfolio optimization problem.}
	\label{fig:risk-averse}
\end{figure*}

In this section, we present numerical experiments for a risk-averse 
portfolio optimization problem. 
%
Suppose there are $d$ assets that one can invest during~$n$ time periods 
labeled as $\{1,...,n\}$. 
Let $R_{i,j}$ be the return or payoff per unit of asset~$j$ at time~$i$, 
and $R_i$ be the vector consists of $R_{i,1},\ldots,R_{i,d}$. 
Let $x\in\R^d$ be the decision variable, where each component $x_j$ represent
the amount of investment or percentage of the total investment allocated
to asset~$j$, for $j=1,\ldots,d$.
The same allocations or percentages of allocations are repeated over 
the~$n$ time periods. We would like to maximize the average return over
the~$n$ periods, but with a penalty on the variance of the returns 
across the~$n$ periods 
(in other words, we would like different periods to have similar returns). 

This problem can be formulated as a finite-sum version of
problem~\eqref{prob:risk-averse}, 
with a discrete random variable $\xi\!\in\!\{1,\ldots,n\}$ 
and $h_i(x)\!=\!\langle R_i, x\rangle$ for $i=1,\ldots,n$.
The function~$r$ can be chosen as the indicator function
of an $\ell_1$ ball, or a soft $\ell_1$ regularization term. 
We choose the later one in our experiments to obtain a sparse asset allocation. 
Using the mappings defined in~\eqref{eqn:mean-var-mapping}, it can be further
transformed into the composite finite-sum problem~\eqref{prob:main-finite},
hence readily solved by the CIVR method.
For comparison, 
we implement the C-SAGA algorithm \cite{ZhangXiao2019C-SAGA} as a benchmark. 
As another benchmark, this problem can also be formulated as a two-layer
composite finite-sum problem~\eqref{prob:Finite-2-Comp}, which was done 
in \citep{VRSC-PG} and \citep{SVR-SCGD}. We solve the two-layer formulation
by ASC-PG \citep{ASC-PG-M.Wang} and VRSC-PG \citep{VRSC-PG}.  
Finally, we also implemented CIVR-adp, which is the adaptive sampling variant
described in Theorem~\ref{theorem:cvg-finite-adaptive}.

We test these algorithms on three real world portfolio datasets, which contain 30, 38 and 49 industrial portfolios respectively, 
from the Keneth R. French Data Library\footnote{http://mba.tuck.dartmouth.edu/pages/faculty/ken.french/data\_library.html}.  
For the three datasets, the daily data of the most recent 24452, 10000 and 24400 days are extracted respectively to conduct the experiments. 
We set the parameter $\lambda = 0.2$ in~\eqref{prob:risk-averse}
and use an $\ell_1$ regularization $r(x) = 0.01\|x\|_1$. 
The experiment results are shown in Figure~\ref{fig:risk-averse}.
The curves are averaged over 20 runs and are plotted against the number of samples of the component functions (the horizontal axis). 

Throughout the experiments, VRSC-PG and C-SAGA algorithms use the batch size 
$S = \lceil n^{2/3}\rceil$ while CIVR uses the batch size 
$S = \lceil \sqrt{n}\rceil$, all dictated by their complexity theory.
CIVR-adp employs the adaptive batch size 
$S_t = \bigl\lceil\min\{\sqrt{10t + 1},\sqrt{n}\}\bigr\rceil$ for $t = 1,...,T$.
For Industrial-30 dataset, all of VRSC-PG, C-SAGA, CIVR and CIVR-adp use the 
same step size $\eta = 0.1$. 
They are chosen from the set $\eta\in\{1, 0.1, 0.01, 0.001, 0.0001\}$ 
by experiments.  
And $\eta = 0.1$ works best for all four tested methods simultaneously. 
Similarly, $\eta = 0.001$ is chosen for the Industrial-38 dataset 
and $\eta = 0.0001$ is chosen for the Industrial-49 dataset.
For ASC-PG, we set its step size parameters $\alpha_k = 0.001/k$ and
$\beta_k = 1/k$ \citep[see details in][]{ASC-PG-M.Wang}. 
They are hand-tuned to ensure ASC-PG converges fast among a range of tested parameters. 
Overall, CIVR and CIVR-adp outperform other methods.

\bibliographystyle{plainnat}
\bibliography{civr}

\newpage
\appendix
\centerline{\Large \textbf{Appendices}}

\section{Convergence analysis for composite expectation case}

In this section, we focus on convergence analysis of CIVR for solving the
stochastic composite optimization problem~\eqref{prob:main}, 
and prove Theorems~\ref{theorem:cvg-general-fixed} 
and~\ref{theorem:cvg-general-adaptive}.

First, we show that under Assumption~\ref{assumption:Lip}, the composite
function $F(x)=f(g(x))$ is smooth and $F'$ has Lipschitz constant
$L_f=\ell_g^2 L_f  + \ell_f L_g$.
\begin{eqnarray*}
\left\|F'(x)-F'(y)\right\| 
&=& \left\| g'(x)^T f'(g(x)) - g'(y)^T f'(g(y))\right\| \\
&=& \left\| g'(x)^T f'(g(x)) - g'(x)^T f'(g(y)) + g'(x)^T f'(g(y)) - g'(y)^T f'(g(y))\right\| \\
&\leq& \left\| g'(x)^T f'(g(x)) - g'(x)^T f'(g(y))\right\| + \left\| g'(x)^T f'(g(y)) - g'(y)^T f'(g(y))\right\| \\
&\leq& \left\| g'(x)\right\| \left\|f'(g(x)) -  f'(g(y))\right\| + \left\|f'(g(y))\right\| \left\|g'(x) - g'(y)\right\| \\
&\leq& \left\| g'(x)\right\| \cdot L_f \left\|g(x) -  g(y)\right\| + \left\|f'(g(y))\right\| \cdot L_g \left\|x - y\right\| \\
&\leq& \ell_g L_f \ell_g \left\|x - y\right\| + \ell_f L_g \left\|x - y\right\| \\
&=& \bigl(\ell_g^2 L_f + \ell_f L_g \bigr)\|x-y\|,
\end{eqnarray*}
where we used $\|g'(x)\|\leq \ell_g$ and $\|f'(g(y))\|\leq\ell_f$,
which are implied by the Lipschitz conditions on~$g$ and $f$ respectively.

Although the incremental estimators used in CIVR are biased, 
as shown in~\eqref{eqn:biased-estimate},
we can still bound their squared distances from the targets. 
This is given in the following lemma. 
\begin{lemma}
	\label{lemma:sarah-mse}
    Suppose Assumption~\ref{assumption:Lip} holds. 
	Let $y_i^t$ and $z_i^t$ be constructed according to \eqref{defn:sarah-1} 
    and \eqref{defn:sarah-2} in Algorithm~\ref{alg:CIVR}. 
    For any $t\geq 1$ and $1\leq i\leq \tau_t-1$,
    we have the following mean squared error (MSE) bounds
   \begin{equation}
   \label{lm:sarah-mse-2}
   \begin{cases}
    \displaystyle
   ~\E\left[\|y_i^t-g(x_i^t)\|^2\right] \leq \E\left[\|y_0^t-g(x_0^t)\|^2\right] + \sum_{r=1}^{i}\frac{\ell_g^2}{S_t}\E\left[\|x_r^t-x_{r-1}^t\|^2\right] \, ,\\
    \displaystyle
   ~\E\left[\|z_i^t-g'(x_i^t)\|^2\right] \leq \E\left[\|z_0^t-g'(x_0^k)\|^2\right] + \sum_{r=1}^{i}\frac{L_g^2}{S_t}\E\left[\|x_r^t-x_{r-1}^t\|^2\right] \, .
   \end{cases}
   \end{equation}
\end{lemma}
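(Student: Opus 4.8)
The plan is to prove both inequalities by induction on $i$, treating the two cases ($y$ and $z$) in exactly parallel fashion since they only differ in which Lipschitz constant ($\ell_g$ vs.\ $L_g$) appears. I will write the argument for $y_i^t$; the argument for $z_i^t$ is identical with $g$ replaced by $g'$ and $\ell_g$ by $L_g$. Fix the epoch index $t$ and abbreviate the error at inner step $i$ by $e_i := y_i^t - g(x_i^t)$. The base case $i=0$ is vacuous (the bound reads $\E[\|e_0\|^2]\le\E[\|e_0\|^2]$). For the inductive step, I will expand $e_i$ using the update rule~\eqref{defn:sarah-2}: writing $\delta_\xi := g_\xi(x_i^t)-g_\xi(x_{i-1}^t)$ and $\bar\delta := g(x_i^t)-g(x_{i-1}^t)$, we have
\begin{equation*}
e_i = e_{i-1} + \Bigl(\frac{1}{S_t}\sum_{\xi\in\cS_i^t}\delta_\xi - \bar\delta\Bigr),
\end{equation*}
so that, conditioning on $\cF_{i-1}$ (the history up through the construction of $x_i^t$, so that $x_i^t, x_{i-1}^t, y_{i-1}^t$ are all determined but $\cS_i^t$ is fresh), the cross term vanishes because $\E[\frac1{S_t}\sum_{\xi\in\cS_i^t}\delta_\xi \mid \cF_{i-1}] = \bar\delta$. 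This gives the key one-step identity
\begin{equation*}
\E\bigl[\|e_i\|^2 \bigm| \cF_{i-1}\bigr] = \|e_{i-1}\|^2 + \E\Bigl[\Bigl\|\frac{1}{S_t}\sum_{\xi\in\cS_i^t}\delta_\xi - \bar\delta\Bigr\|^2 \Bigm| \cF_{i-1}\Bigr].
\end{equation*}

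The second step is to bound the variance term. Since the samples in $\cS_i^t$ are i.i.d.\ and $\frac1{S_t}\sum_{\xi\in\cS_i^t}\delta_\xi$ is an average of $S_t$ i.i.d.\ copies of $\delta_\xi$ with mean $\bar\delta$, the variance of the average is $\frac{1}{S_t}$ times the variance of a single $\delta_\xi$, which is at most $\frac{1}{S_t}\E[\|\delta_\xi\|^2\mid\cF_{i-1}]$ (dropping the $-\|\bar\delta\|^2$). Then the $\ell_g$-Lipschitz property of each $g_\xi$ (Assumption~\ref{assumption:Lip}) gives $\|\delta_\xi\| = \|g_\xi(x_i^t)-g_\xi(x_{i-1}^t)\| \le \ell_g\|x_i^t-x_{i-1}^t\|$, hence
\begin{equation*}
\E\Bigl[\Bigl\|\tfrac{1}{S_t}\textstyle\sum_{\xi\in\cS_i^t}\delta_\xi - \bar\delta\Bigr\|^2 \Bigm| \cF_{i-1}\Bigr] \le \frac{\ell_g^2}{S_t}\|x_i^t-x_{i-1}^t\|^2.
\end{equation*}
Taking total expectations and using the tower property turns the one-step identity into $\E[\|e_i\|^2] \le \E[\|e_{i-1}\|^2] + \frac{\ell_g^2}{S_t}\E[\|x_i^t-x_{i-1}^t\|^2]$, and unrolling this recursion from $i$ down to $0$ produces the claimed bound with the telescoped sum $\sum_{r=1}^i$.

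I do not anticipate a serious obstacle here — the result is the standard SARAH/\textsc{Spider} MSE recursion, and the only real content is (i) correctly identifying the filtration so that the cross term is a genuine martingale-difference term, and (ii) the i.i.d.\ averaging bound that produces the crucial $1/S_t$ factor. The one point requiring a little care is the matrix/vector-valued setting for $z_i^t$: the quantities $g'_\xi(x_i^t)-g'_\xi(x_{i-1}^t)$ are $p\times d$ matrices, so ``variance'' should be read with the Frobenius norm $\|\cdot\|$, and the i.i.d.\ averaging identity $\E\|\bar X-\E X\|_F^2 = \frac1{S_t}\E\|X-\E X\|_F^2$ still holds entrywise and hence for the Frobenius norm; the $L_g$-Lipschitz bound on $g'_\xi$ then closes the argument exactly as before. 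If the paper prefers to avoid conditioning subtleties, an equivalent route is to expand $\|e_i\|^2$ directly, take expectation over $\cS_i^t$ only, and note that all terms not involving $\cS_i^t$ are constants under that expectation; this yields the same recursion.
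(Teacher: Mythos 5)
Your proposal is correct and takes essentially the same route as the paper's proof: your one-step identity with the vanishing cross term is exactly the paper's conditional bias--variance decomposition (bias $= e_{i-1}$, variance of the mini-batch average bounded by $\tfrac{1}{S_t}\E[\|\delta_\xi\|^2]$ via i.i.d.\ sampling and the Lipschitz bound), followed by the same unrolling of the recursion. Your explicit filtration bookkeeping and the remark on the Frobenius norm for the Jacobian case are slightly more careful statements of steps the paper treats informally, but the argument is the same.
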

\begin{proof}
    We first state a fact that allows us to decompose the MSE into a squared 
    bias term and a variance term, that is, for an arbitrary random vector 
    $\zeta$ and a constant vector $u$, we have
	\be
	\label{eqn:mse-decomp}
	\E[\|\zeta-u\|^2] = \|\E[\zeta]-u\|^2 + \Var(\zeta),
	\ee
	where $\Var(\zeta) := \E[\|\zeta-\E[\zeta]\|^2]$. As a result,
	\beaa
	\E\left[\|y_i^t-g(x_i^t)\|^2\big|x_i^t\right]  
    = \bigl\|\E[y_i^t|x_i^t]-g(x_i^t)\bigr\|^2 + \Var\bigl(y_i^t|x_i^t\bigr).
	\eeaa 
	For the bias term, we have $\E[y_i^t|x_i^t]-g(x_i^t) = y_{i-1}^t - g(x_{i-1}^t)$. For the variance term, we have
	\beaa
	\Var\bigl(y_i^t|x_i^t\bigr)  
    &=& \Var\biggl(y_{i-1}^t + \frac{1}{S_t}\sum_{\xi \in\cS_i^t}(g_\xi(x_i^t) - g_\xi(x_{i-1}^t)) \;\Big|\; x_i^t\biggr)\\
	&=& \frac{1}{S_t}\Var\bigl(g_\xi(x_i^t)-g_\xi(x_{i-1}^t)\;|\,x_i^t\bigr)\\ 
	&\leq& \frac{1}{S_t}\E\bigl[\|g_\xi(x_i^t) - g_\xi(x_{i-1}^t)\|^2|x_i^t\bigr]\\
	& \leq & \frac{\ell_g^2}{S_t}\|x_i^t-x_{i-1}^t\|^2,
	\end{eqnarray*}
    where the second equality is due to the fact that $y_{i-1}^t$ is a constant
    conditioning on $x_i^t$ and in the last inequality we used the 
    $\ell_g$-Lipschitz continuity of $g_\xi$. Consequently,
    \[
        \E\left[\|y_i^t-g(x_i^t)\|^2\right]  \leq  \E\left[\|y_{i-1}^t-g(x_{i-1}^t)\|^2\right] + \frac{\ell_g^2}{S_t} \E\left[\|x_i^t-x_{i-1}^t\|^2\right].
    \]
	Recursively applying the above procedure yields
	\begin{equation}
	\label{lm:sarah-mse-3}
	\E\left[\|y_i^t-g(x_i^t)\|^2\right]  \leq  \E\left[\|y_0^t-g(x_0^t)\|^2\right] + \sum_{r=1}^i\frac{\ell_g^2}{S_t} \E\left[\|x^t_r-x^t_{r-1}\|^2\right].
	\end{equation}

    Similarly, the bound on $\E\left[\|z_i^t-g'(x_i^t)\|^2\right]$ can be shown by using the $L_g$-Lipschitz continuity of~$g'_\xi$.
\end{proof}

In Algorithm~\ref{alg:CIVR}, we approximate the gradient of 
$F(x):=f(g(x))$ by $\tnf(x_i^t)=(z_i^t)^Tf'(y_i^t)$. 
The next lemma bounds the MSE of this estimator.
\begin{lemma}\label{lemma:MSE-grad-expectation}
    Suppose Assumptions~\ref{assumption:Lip} and~\ref{assumption:variance} hold.
    Then we have
\begin{equation}
\label{eqn:mse-F'}
\E[\|\tnf(x_i^t) - F'(x_i^t)\|^2] \leq \frac{G_0}{S_t}\sum_{r=1}^i\E[\|x_{r}^t - x_{r-1}^t\|^2] + \frac{\sigma_0^2}{B_t},
\end{equation}
where  
\begin{equation*}
G_0: = 2\bigl(\ell_g^4 L_f^2  + \ell_f^2 L_g^2\bigr)\quad \mbox{and}\quad 
\sigma_0^2 := 2\bigl(\ell_g^2 L_f^2\sigma_{g}^2 + \ell_f^2\sigma_{g'}^2\bigr)\,.
\end{equation*}
\end{lemma}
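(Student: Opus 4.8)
The plan is to bound the error $\tnf(x_i^t)-F'(x_i^t) = (z_i^t)^T f'(y_i^t) - g'(x_i^t)^T f'(g(x_i^t))$ by splitting it into two pieces — one coming from the error in the Jacobian estimate $z_i^t$ and one from the error in the value estimate $y_i^t$ (propagated through $f'$) — and then invoke Lemma~\ref{lemma:sarah-mse} to control each piece.

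\textbf{Step 1: algebraic decomposition.} First I would write
\[
\tnf(x_i^t)-F'(x_i^t) = \underbrace{(z_i^t)^T f'(y_i^t) - (z_i^t)^T f'(g(x_i^t))}_{(\mathrm{I})} + \underbrace{(z_i^t)^T f'(g(x_i^t)) - g'(x_i^t)^T f'(g(x_i^t))}_{(\mathrm{II})}.
\]
For $(\mathrm{I})$, using $\|z_i^t\|\leq \ell_g$ (which follows because each $g'_\xi$, hence the incremental average $z_i^t$, is bounded by $\ell_g$ in operator norm under Assumption~\ref{assumption:Lip}) and the $L_f$-Lipschitz continuity of $f'$, I get $\|(\mathrm{I})\| \leq \ell_g L_f \|y_i^t - g(x_i^t)\|$. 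For $(\mathrm{II})$, using $\|f'(g(x_i^t))\| \leq \ell_f$, I get $\|(\mathrm{II})\| \leq \ell_f \|z_i^t - g'(x_i^t)\|$. Then by the elementary inequality $\|a+b\|^2 \leq 2\|a\|^2 + 2\|b\|^2$,
\[
\|\tnf(x_i^t)-F'(x_i^t)\|^2 \leq 2\ell_g^2 L_f^2 \|y_i^t - g(x_i^t)\|^2 + 2\ell_f^2 \|z_i^t - g'(x_i^t)\|^2.
\]

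\textbf{Step 2: take expectations and apply Lemma~\ref{lemma:sarah-mse}.} Taking total expectation and substituting the two MSE bounds from~\eqref{lm:sarah-mse-2} gives
\begin{align*}
\E[\|\tnf(x_i^t)-F'(x_i^t)\|^2] &\leq 2\ell_g^2 L_f^2\Bigl(\E[\|y_0^t-g(x_0^t)\|^2] + \tfrac{\ell_g^2}{S_t}\textstyle\sum_{r=1}^i \E[\|x_r^t-x_{r-1}^t\|^2]\Bigr) \\
&\quad + 2\ell_f^2\Bigl(\E[\|z_0^t-g'(x_0^t)\|^2] + \tfrac{L_g^2}{S_t}\textstyle\sum_{r=1}^i \E[\|x_r^t-x_{r-1}^t\|^2]\Bigr).
\end{align*}
Collecting the sum terms gives coefficient $\tfrac{2(\ell_g^4 L_f^2 + \ell_f^2 L_g^2)}{S_t} = \tfrac{G_0}{S_t}$ on $\sum_{r=1}^i \E[\|x_r^t-x_{r-1}^t\|^2]$, exactly as claimed.

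\textbf{Step 3: bound the initial-batch error.} It remains to show $2\ell_g^2 L_f^2 \,\E[\|y_0^t - g(x_0^t)\|^2] + 2\ell_f^2\,\E[\|z_0^t - g'(x_0^t)\|^2] \leq \sigma_0^2/B_t$. Since $y_0^t$ is the average of $B_t$ i.i.d. samples $g_\xi(x_0^t)$ conditioned on $x_0^t$, and $\E_\xi[g_\xi(x_0^t)] = g(x_0^t)$, the conditional variance is $\tfrac{1}{B_t}\E_\xi[\|g_\xi(x_0^t) - g(x_0^t)\|^2] \leq \sigma_g^2/B_t$ by Assumption~\ref{assumption:variance}; taking total expectation preserves this. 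Likewise $\E[\|z_0^t - g'(x_0^t)\|^2] \leq \sigma_{g'}^2/B_t$. Combining, the initial-batch contribution is at most $\tfrac{2(\ell_g^2 L_f^2 \sigma_g^2 + \ell_f^2 \sigma_{g'}^2)}{B_t} = \sigma_0^2/B_t$, which completes the proof.

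The only mild subtlety — not really an obstacle — is justifying $\|z_i^t\| \leq \ell_g$ for the biased incremental estimator; this holds because $z_i^t$ is a telescoping sum of sample averages that collapses (in the finite-sum picture, or in expectation-of-norm bounds) to averages of $g'_\xi$ evaluated at various points, each of operator norm $\leq \ell_g$, so convexity of the norm gives the bound. If one prefers to avoid this, an alternative is to keep $\|z_i^t\|$ inside the expectation and split via $\|z_i^t\| \leq \|z_i^t - g'(x_i^t)\| + \ell_g$, at the cost of slightly messier constants; I would use the clean bound $\|z_i^t\|\leq\ell_g$ since it matches the stated constant $G_0$ exactly.
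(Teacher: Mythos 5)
Your Steps 2 and 3 are fine and match the paper, but Step 1 has a genuine gap: the bound $\|z_i^t\|\leq\ell_g$ that your decomposition relies on is not justified, and in fact it is false in general. The estimator $z_i^t = z_0^t + \sum_{r=1}^i \frac{1}{S_t}\sum_{\xi\in\cS_r^t}\bigl(g'_\xi(x_r^t)-g'_\xi(x_{r-1}^t)\bigr)$ does \emph{not} collapse into a convex combination of Jacobians $g'_\xi$ evaluated at various points: the sample sets $\cS_r^t$ and $\cS_{r+1}^t$ are different independent draws, so the $+g'_\xi(x_r^t)$ terms from step $r$ and the $-g'_\xi(x_r^t)$ terms from step $r+1$ involve different $\xi$'s and do not cancel. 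The representation of $z_i^t$ therefore has signed (not convex) weights; e.g.\ with $S_t=1$ one has $z_1^t = z_0^t + g'_{\xi_1}(x_1^t)-g'_{\xi_1}(x_0^t)$, whose norm can approach $3\ell_g$. Only in the degenerate full-batch case $\cS_r^t=\{1,\dots,n\}$ does the telescoping give $z_i^t=g'(x_i^t)$. Your proposed fallback, $\|z_i^t\|\leq\|z_i^t-g'(x_i^t)\|+\ell_g$, does not cleanly repair this either: squaring term $(\mathrm{I})$ then produces products such as $\E\bigl[\|z_i^t-g'(x_i^t)\|^2\,\|y_i^t-g(x_i^t)\|^2\bigr]$, which the MSE bounds of Lemma~\ref{lemma:sarah-mse} (stated only in expectation, not almost surely) cannot control without additional arguments, and it would in any case change the constant $G_0$.

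The fix is to choose the other intermediate term in the splitting, which is what the paper does: add and subtract $(g'(x_i^t))^T f'(y_i^t)$ rather than $(z_i^t)^T f'(g(x_i^t))$. Then the first piece is $(z_i^t-g'(x_i^t))^T f'(y_i^t)$, bounded by $\ell_f\|z_i^t-g'(x_i^t)\|$ since $\|f'(\cdot)\|\leq\ell_f$ everywhere by the $\ell_f$-Lipschitz continuity of $f$, and the second piece is $(g'(x_i^t))^T\bigl(f'(y_i^t)-f'(g(x_i^t))\bigr)$, bounded by $\ell_g L_f\|y_i^t-g(x_i^t)\|$ since $\|g'(x_i^t)\|\leq\ell_g$ is a deterministic bound on the \emph{exact} Jacobian (unlike $z_i^t$). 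With this decomposition every norm factor multiplying a random error is deterministically bounded, the inequality $\|a+b\|^2\leq2\|a\|^2+2\|b\|^2$ gives exactly $2\ell_f^2\E[\|z_i^t-g'(x_i^t)\|^2]+2\ell_g^2L_f^2\E[\|y_i^t-g(x_i^t)\|^2]$, and your Steps 2 and 3 then go through verbatim with the stated $G_0$ and $\sigma_0^2$.
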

\begin{proof}
Using Assumption~\ref{assumption:Lip}, one immediately gets
\begin{eqnarray}
& & \E\left[\|\tnf(x_i^t) - F'(x_i^t)\|^2\right] \nonumber \\
& = & \E\left[\|(z_i^t)^Tf'(y_i^t) - (g'(x_i^t))^Tf'(g(x_i^t))\|^2\right] \nonumber \\ 
& = & \E\left[\|(z_i^t)^Tf'(y_i^t) -  (g'(x_i^t))^Tf'(y_i^t) +  (g'(x_i^t))^Tf'(y_i^t) - (g'(x_i^t))^Tf'(g(x_i^t))\|^2\right]\nonumber\\
& \leq &  2\E\left[\|(z_i^t)^Tf'(y_i^t) -  (g'(x_i^t))^Tf'(y_i^t)\|^2\right] + 2\E\left[\|(g'(x_i^t))^Tf'(y_i^t) - (g'(x_i^t))^Tf'(g(x_i^t))\|^2\right]\nonumber\\
& \leq & 2\ell_f^2\E\left[\|z_i^t-g(x_i^t)\|^2\right] + 2\ell_g^2L_f^2\E\left[\|y_i^t-g'(x_i^t)\|^2\right] .
\label{eqn:mse-F}
\end{eqnarray}
Therefore, by substituting the MSE bounds provided in Lemma \ref{lemma:sarah-mse} into inequality \eqref{eqn:mse-F}, we obtain 
\begin{eqnarray}
\E\left[\|\tnf(x_i^t) - F'(x)\|^2\right] 
&\leq& \frac{2\bigl(\ell_g^4 L_f^2+\ell_f^2 L_g^2\bigr)}{S_t}
\sum_{r=1}^i\E\left[\|x_{r}^t - x_{r-1}^t\|^2\right] \nonumber \\
&& + \; 2\ell_g^2 L_f^2 \E\left[\|y_0^t-g(x_0^t)\|^2\right] + 2\ell_f^2 \E\left[\|z_0^t-g'(x_0^k)\|^2\right] .
\label{eqn:explicit-ini-variance}
\end{eqnarray}
Under Assumption~\ref{assumption:variance}, we can bound the MSE of
the estimates in~\eqref{defn:sarah-1} as
\[
    \E\left[\|y_0^t-g(x_0^t)\|^2\right] \leq \frac{\sigma_g^2}{B_t}\,, \qquad
    \E\left[\|z_0^t-g'(x_0^k)\|^2\right] \leq \frac{\sigma_{g'}^2}{B_t} \,.
\]
Combining these MSE bounds with~\eqref{eqn:explicit-ini-variance} yields
the desired result.
\end{proof}

For the proximal gradient type of algorithms, no matter deterministic or stochastic, a common metric to quantify the optimality of~$x_i^t$ is the norm of the so-called \emph{proximal gradient mapping}
\be 
\label{defn:Gt}  
\mathcal{G}_\eta(x_i^t):=\frac{1}{\eta}\bigl(x_i^t-\hat{x}_{i+1}^t\bigr), 
\ee
where $\eta$ is the step size used to produce the update
\[
\hat{x}_{i+1}^t=\prox_r^{\eta}\bigl(x_i^t-\eta F'(x_i^t)\bigr).
\]
Since we use a constant $\eta$ throughout this paper, we will omit
the subscript~$\eta$ and use $\mathcal{G}(x)$ to denote the 
proximal gradient mapping at~$x$.

Our goal is to find a point~$x$ with 
$\E\bigl[\|\mathcal{G}(x)\|^2\bigr]\leq \epsilon$.
However, in Algorithm~\ref{alg:CIVR}, 
we only have the approximate proximal gradient mapping 
\be
\label{defn:tGt}
\tcgt:=\frac{1}{\eta}(x_i^t-x_{i+1}^t) \,,
\ee
where $x_{i+1}^t$ is computed using the estimated gradient $\tnf(x_i^t)$:
\[
x_{i+1}^t=\prox_r^{\eta}\bigl(x_i^t-\eta \tnf(x_i^t)\bigr).
\]
Hence we need to establish the connection between $\cgt$ and $\tcgt$,
which is done in the next lemma.

\begin{lemma}\label{lemma:grad-mapping-closedness}
For the two gradient mappings defined in~\eqref{defn:Gt} and~\eqref{defn:tGt},
we have
\begin{equation}  
\label{eqn:closeness}
\E\left[\|\cgt\|^2\right] \leq 2\E\left[\|\tcgt\|^2\right] + 2\E\left[\|\tnf(x_i^t)-F'(x_i^t)\|^2\right].
\end{equation}
\end{lemma}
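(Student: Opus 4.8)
The plan is to exploit the non-expansiveness of the proximal operator $\prox_r^\eta$, which holds because $r$ is convex and lower-semicontinuous (Assumption~\ref{assumption:Lip}) and $\prox_r^\eta$ is therefore the resolvent of the maximal monotone operator $\partial r$, hence $1$-Lipschitz. The key observation is that $\hxtie=\prox_r^\eta\bigl(x_i^t-\eta F'(x_i^t)\bigr)$ and $\xtie=\prox_r^\eta\bigl(x_i^t-\eta\tnf(x_i^t)\bigr)$ are images of two nearby points under the \emph{same} map, so the exact gradient mapping $\cgt$ and the computable one $\tcgt$ cannot be far apart.

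First I would subtract the definitions~\eqref{defn:Gt} and~\eqref{defn:tGt} to obtain $\cgt-\tcgt=\frac{1}{\eta}\bigl(\xtie-\hxtie\bigr)$, the $x_i^t$ terms canceling. Next I would apply the $1$-Lipschitz property of $\prox_r^\eta$ to the two arguments $x_i^t-\eta\tnf(x_i^t)$ and $x_i^t-\eta F'(x_i^t)$; the $x_i^t$ again cancels inside the norm, leaving $\|\xtie-\hxtie\|\le\eta\,\|\tnf(x_i^t)-F'(x_i^t)\|$, i.e. $\|\cgt-\tcgt\|\le\|\tnf(x_i^t)-F'(x_i^t)\|$. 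Finally, writing $\cgt=\tcgt+\bigl(\cgt-\tcgt\bigr)$ and using the elementary inequality $\|a+b\|^2\le 2\|a\|^2+2\|b\|^2$, then taking expectation over all the randomness of the algorithm, yields $\E[\|\cgt\|^2]\le 2\E[\|\tcgt\|^2]+2\E[\|\cgt-\tcgt\|^2]\le 2\E[\|\tcgt\|^2]+2\E[\|\tnf(x_i^t)-F'(x_i^t)\|^2]$, which is exactly~\eqref{eqn:closeness}.

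There is no genuine obstacle in this lemma; its content is entirely encapsulated in the non-expansiveness of $\prox_r^\eta$, and the only point requiring a little care is that all of the above inequalities hold pathwise, so that the expectation is applied only at the very last step. This bound is what will later let us transfer any control of the computable quantity $\E[\|\tcgt\|^2]$ — combined with the MSE estimate of Lemma~\ref{lemma:MSE-grad-expectation} — into control of the true optimality measure $\E[\|\cgt\|^2]$.
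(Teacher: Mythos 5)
Your proof is correct and follows essentially the same route as the paper's: both arguments reduce the bound to $\|x_{i+1}^t-\hat{x}_{i+1}^t\|\le\eta\,\|\tnf(x_i^t)-F'(x_i^t)\|$ via the non-expansiveness of $\prox_r^\eta$ and then apply $\|a+b\|^2\le 2\|a\|^2+2\|b\|^2$ (the paper states the decomposition on the iterates $x_i^t-\hat{x}_{i+1}^t=(x_i^t-x_{i+1}^t)+(x_{i+1}^t-\hat{x}_{i+1}^t)$, which is your decomposition of $\cgt$ scaled by $\eta$). No gaps.
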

\begin{proof}
Using the inequality
$\|x_i^t-\hat{x}_{i+1}^t\|^2\leq 2\|x_i^t-x_{i+1}^t\|^2 + 2\|x_{i+1}^t-\hat{x}_{i+1}^t\|^2$ 
and the definitions of $\cgt$ and $\tcgt$, we have
\begin{eqnarray*}
\E\left[\|\cgt\|^2\right] 
&\leq& 2\E\left[\|\tcgt\|^2\right] + \frac{2}{\eta^2}\left\|x_{i+1}^t-\hat{x}_{i+1}^t\right\|^2 \\
&=& 2\E\left[\|\tcgt\|^2\right] + \frac{2}{\eta^2}\left\|\prox_r^{\eta}\bigl(x_i^t-\eta F'(x_i^t)\bigr)-\prox_r^{\eta}\bigl(x_i^t-\eta \tnf(x_i^t)\bigr)\right\|^2 \\
&\leq& 2\E\left[\|\tcgt\|^2\right] + \frac{2}{\eta^2}\left\|x_i^t-\eta F'(x_i^t)-\bigl(x_i^t-\eta \tnf(x_i^t)\bigr)\right\|^2 \\
&=& 2\E\left[\|\tcgt\|^2\right] + 2\E\left[\|\tnf(x_i^t)-F'(x_i^t)\|^2\right],
\end{eqnarray*}
where in the second inequality we used the non-expansive property of
proximal mapping \citep[e.g.,][Section~31]{Rockafellar70book}.
\end{proof}

The next lemma bounds the amount of expected descent per iteration 
in Algorithm~\ref{alg:CIVR}.
\begin{lemma}
	\label{lemma:descent-general}
	Let the sequence $\{x_i^t\}$ be generated by Algorithm~\ref{alg:CIVR}.
	Then for all $t\geq 1$ and $0\leq i\leq \tau_t-1$, 
    we have the following two inequalities 
	\begin{equation} 
	\label{lm:descent-general-1}
	\E[\Phi(x_{i+1}^t)] ~\leq~ 
	\E[\Phi(x_i^t)] - \left(\frac{\eta}{2} - \frac{L_F\eta^2}{2}\right)
	\E\left[\|\tcgt\|^2\right] + \frac{\eta}{2}\E[\|\tnf(x_i^t) - F'(x_i^t)\|^2],
	\end{equation} 
	and
	\begin{align} 
        \E[\Phi(x_{i+1}^t)]  ~\leq~  &
	\E[\Phi(x_i^t)] - \frac{\eta}{8}\E[\|\cgt\|^2] + \frac{3\eta}{4}\E[\|\tnf(x_i^t) - F'(x_i^t)\|^2] \nonumber \\
	& - \left(\frac{1}{4\eta} - \frac{L_F}{2}\right) \E\left[\|x_{i}^t-x_{i+1}^t\|^2\right].
	\label{lm:descent-general-2}
	\end{align}  
\end{lemma}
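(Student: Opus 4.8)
The plan is to combine the $L_F$-smoothness of $F$ (established at the start of this appendix) with the optimality of the proximal step, producing a single base inequality, and then to trade off the gradient-error cross term $\langle F'(x_i^t)-\tnf(x_i^t),\tcgt\rangle$ in two different ways to obtain \eqref{lm:descent-general-1} and \eqref{lm:descent-general-2}. First, $L_F$-smoothness gives
\[
F(\xtie)\leq F(x_i^t)+\iprod{F'(x_i^t)}{\xtie-x_i^t}+\frac{L_F}{2}\|\xtie-x_i^t\|^2 .
\]
Second, since $\xtie=\prox_r^{\eta}\bigl(x_i^t-\eta\tnf(x_i^t)\bigr)$ minimizes the $(1/\eta)$-strongly convex function $y\mapsto r(y)+\frac{1}{2\eta}\|y-(x_i^t-\eta\tnf(x_i^t))\|^2$, evaluating that function at $y=x_i^t$ and invoking its strong convexity, then cancelling the common $\frac{\eta}{2}\|\tnf(x_i^t)\|^2$ terms, yields
\[
r(\xtie)\leq r(x_i^t)-\iprod{\tnf(x_i^t)}{\xtie-x_i^t}-\frac{1}{\eta}\|\xtie-x_i^t\|^2 .
\]
Adding the two and using the identity $\xtie-x_i^t=-\eta\,\tcgt$ (so $\|\xtie-x_i^t\|^2=\eta^2\|\tcgt\|^2$) gives the deterministic base estimate
\[
\Phi(\xtie)\leq\Phi(x_i^t)-\eta\iprod{F'(x_i^t)-\tnf(x_i^t)}{\tcgt}-\Bigl(\eta-\tfrac{L_F\eta^2}{2}\Bigr)\|\tcgt\|^2 .
\]

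To derive \eqref{lm:descent-general-1}, I would apply Young's inequality with weight $1$, namely $-\eta\iprod{F'(x_i^t)-\tnf(x_i^t)}{\tcgt}\leq\frac{\eta}{2}\|\tnf(x_i^t)-F'(x_i^t)\|^2+\frac{\eta}{2}\|\tcgt\|^2$; this absorbs $\frac{\eta}{2}\|\tcgt\|^2$ against the quadratic term, leaving coefficient $-(\frac{\eta}{2}-\frac{L_F\eta^2}{2})$ on $\|\tcgt\|^2$, and taking expectations completes this half.

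To derive \eqref{lm:descent-general-2}, I would start from the same intermediate bound $\Phi(\xtie)\leq\Phi(x_i^t)+\frac{\eta}{2}\|\tnf(x_i^t)-F'(x_i^t)\|^2-(\frac{\eta}{2}-\frac{L_F\eta^2}{2})\|\tcgt\|^2$ and split the quadratic term as $-(\frac{\eta}{2}-\frac{L_F\eta^2}{2})\|\tcgt\|^2=-\frac{\eta}{4}\|\tcgt\|^2-(\frac{\eta}{4}-\frac{L_F\eta^2}{2})\|\tcgt\|^2$. After taking expectations, Lemma~\ref{lemma:grad-mapping-closedness} in the rearranged form $\E[\|\tcgt\|^2]\geq\frac{1}{2}\E[\|\cgt\|^2]-\E[\|\tnf(x_i^t)-F'(x_i^t)\|^2]$ turns $-\frac{\eta}{4}\E[\|\tcgt\|^2]$ into $-\frac{\eta}{8}\E[\|\cgt\|^2]+\frac{\eta}{4}\E[\|\tnf(x_i^t)-F'(x_i^t)\|^2]$; combining the two error terms gives the coefficient $\frac{3\eta}{4}$, and rewriting the leftover $-(\frac{\eta}{4}-\frac{L_F\eta^2}{2})\|\tcgt\|^2$ as $-(\frac{1}{4\eta}-\frac{L_F}{2})\|x_i^t-\xtie\|^2$ via $\|\tcgt\|^2=\eta^{-2}\|x_i^t-\xtie\|^2$ produces exactly \eqref{lm:descent-general-2}.

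I do not expect any conceptual obstacle here; the only things to be careful about are (i) keeping straight which inequalities hold pointwise versus only in expectation — in particular, Lemma~\ref{lemma:grad-mapping-closedness} is an expectation statement, so the $\|\tcgt\|^2\to\|\cgt\|^2$ conversion must be performed only after taking $\E[\cdot]$ — and (ii) choosing the $\frac{\eta}{4}+\frac{\eta}{4}$ split of the $\frac{\eta}{2}$ coefficient and the weight-$1$ Young step so that the final constants $\frac{1}{8}$, $\frac{3}{4}$, and $\frac{1}{4\eta}-\frac{L_F}{2}$ come out precisely as stated. These are bookkeeping matters rather than genuine difficulties.
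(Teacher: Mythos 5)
Your proposal is correct and follows essentially the same route as the paper's proof: $L_F$-smoothness of $F$ plus the strong convexity/optimality of the proximal subproblem evaluated at $x_i^t$, a weight-one Young step on the error cross term to get \eqref{lm:descent-general-1}, and then the $\frac{\eta}{4}+\frac{\eta}{4}$ split combined with Lemma~\ref{lemma:grad-mapping-closedness} (after taking expectations) to get \eqref{lm:descent-general-2}. All constants check out, so no changes are needed.
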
 
\begin{proof}
	By applying the $L_F$-Lipschitz continuity of $F'$ and the optimality of the $\frac{1}{\eta}$-strongly convex subproblem, we have 
	\beaa
	\Phi(x_{i+1}^t)& = & F(x_{i+1}^t) + r(x_{i+1}^t)\\
	& \leq & F(x_i^t) + \langle F'(x_i^t),x_{i+1}^t-x_i^t\rangle + \frac{L_F}{2}\|x_{i+1}^t-x_i^t\|^2 + r(x_{i+1}^t)\\
	& = & F(x_i^t) + \langle\tnf(x_i^t),x_{i+1}^t-x_i^t\rangle + \frac{1}{2\eta}\|x_{i+1}^t-x_i^t\|^2 + r(x_{i+1}^t)\\
	& & + \langle F'(x_i^t)-\tnf(x_i^t),x_{i+1}^t-x_i^t\rangle- (\frac{1}{2\eta}-\frac{L_F}{2})\|x_{i+1}^t-x_i^t\|^2 \\
	& \leq & F(x_i^t) + r(x_i^t) - \frac{1}{2\eta}\|x_{i+1}^t-x_i^t\|^2- (\frac{1}{2\eta}-\frac{L_F}{2})\|x_{i+1}^t-x_i^t\|^2 \\
	& & + \frac{\eta}{2}\|\tnf(x_i^t)-F'(x_i^t)\|^2 + \frac{1}{2\eta}\|x_{i+1}^t-x_i^t\|^2\\
	& = & \Phi(x_i^t) - (\frac{1}{2\eta}-\frac{L_F}{2})\|x_{i+1}^t-x_i^t\|^2+ \frac{\eta}{2}\|\tnf(x_i^t)-F'(x_i^t)\|^2.
	\eeaa
	Taking the expectation on both sides completes the proof of inequality \eqref{lm:descent-general-1}.  By inequality \eqref{eqn:closeness}, we know that 
	$$-\frac{\eta}{4}\E[\|\tcgt\|^2]\leq -\frac{\eta}{8}\E[\|\cgt\|^2] + \frac{\eta}{4}\E[\|\tnf(x_i^t)-F'(x_i^t)\|^2].$$
	Adding this inequality in to \eqref{lm:descent-general-1} yields \eqref{lm:descent-general-2}.
\end{proof}

\subsection{Proof of Theorem~\ref{theorem:cvg-general-fixed}}
\label{sec:proof-cvg-general-fixed}

\begin{proof}
	Because all $\tau_t$, $B_t$ and $S_t$ are taking their values independent of $t$. We denote $\tau = \tau_t$, $B = B_t$ and $S = S_t$ for all $t$ for clarity. By Lemma \ref{lemma:descent-general}, summing up inequality \eqref{lm:descent-general-2} throughout the $t$-th epoch and applying \eqref{eqn:mse-F'} gives 
	\begin{eqnarray*}
	\frac{\eta}{8}\sum_{i=0}^{\tau-1}\E[\|\cgt\|^2] & \leq & \E[\Phi(x_0^t)] - \E[\Phi(x_{\tau}^t)] - \left(\frac{1}{4\eta} - \frac{L_F}{2}\right)\sum_{r=1}^{\tau}\E[\|x_r^t-x_{r-1}^t\|^2]\\
    & &  + \frac{3G_0\eta}{4S}\sum_{i=1}^{\tau-1}\sum_{r=1}^{i}\E[\|x_r^t-x_{r-1}^t\|^2] + \frac{3\sigma_0^2\eta}{4B}\tau\\
    & \leq & \E[\Phi(x_0^t)] - \E[\Phi(x_{\tau}^t)] - \left(\frac{1}{4\eta} - \frac{L_F}{2}-\tau\frac{3G_0\eta}{4S}\right)\sum_{r=1}^{\tau}\E[\|x_r^t-x_{r-1}^t\|^2] \\
    && + \frac{3\sigma_0^2\eta}{4B}\tau,
	\end{eqnarray*}
where the second inequality is due to the fact that
\[
    \sum_{i=1}^{\tau-1}\sum_{r=1}^{i}\E[\|x_r^t-x_{r-1}^t\|^2]\leq \tau\sum_{r=1}^{\tau}\E[\|x_r^t-x_{r-1}^t\|^2].
\] 
When we choose the parameters satisfying $\tau\leq S$, then  the coefficient $\frac{1}{4\eta} - \frac{L_F}{2}-\tau\frac{3G_0\eta}{4S} \geq \frac{1}{4\eta} - \frac{L_F}{2}-\frac{3G_0\eta}{4}$ which depends only on the parameter $\eta$ and some constant. If we choose the $\eta$ according to the theorem, then  $\frac{1}{4\eta} - \frac{L_F}{2}-\frac{3G_0\eta}{4}\geq0$, yielding that 
\begin{equation}
    \label{thm:cvg-general-fixed-1}
	\frac{\eta}{8}\sum_{i=0}^{\tau-1}\E[\|\cgt\|^2] ~\leq~ \E[\Phi(x_0^t)] - \E[\Phi(x_{\tau}^t)] + \frac{3\sigma_0^2\eta}{4B}\tau.
\end{equation}
Summing this up throughout the epochs gives 
\begin{equation*}
\frac{\eta}{8}\sum_{t=1}^T\sum_{i=0}^{\tau-1}\E[\|\cgt\|^2] ~\leq~  \E[\Phi(x_0^1)] - \E[\Phi(x_{\tau}^T)] + \frac{3\sigma_0^2\eta}{4B}\tau T \leq \Phi(x_0^1) - \Phi^* + \frac{3\sigma_0^2\eta}{4B}\tau T,
\end{equation*}
where we have applied the fact that $x_0^t = x_{\tau}^{t-1}$. By the random sampling scheme for output $\bar{x}$, we have 
\begin{equation}
\label{thm:cvg-general-fixed-2}
\E[\|\mathcal{G}(\bar{x})\|^2] = \frac{1}{\tau T}\sum_{t=1}^T\sum_{i=0}^{\tau-1}\E[\|\cgt\|^2] \leq \frac{8(\Phi(x_0^1) - \Phi^*)}{\tau T\eta} + \frac{6\sigma_0^2}{B}.
\end{equation}
Substitute the values of $T,\tau$ and $B$ gives \eqref{thm:cvg-general-fixed}.
\end{proof}

To simplify presentation, we omit $\lceil\cdot\rceil$ on integer parameters
in the following discussion. 
\begin{itemize}
    \item With $\eta\leq \frac{4}{L_F+\sqrt{L_F^2+12 G_0}}$, and letting $T=1/\sqrt{\epsilon}$, $B=\sigma_0^2/\epsilon$, and $\tau=S=1/\sqrt{\epsilon}$, we have 
\[
    \E[\|\mathcal{G}(\bar{x})\|^2] \leq 8\bigl((\Phi(x_0^1)-\Phi^*)\eta^{-1} + 6 \bigr)\epsilon,
\]
and the sample complexity is 
$T(B+2\tau S)=\cO\bigl(\sigma_0^2\epsilon^{-3/2}+\epsilon^{-3/2}\bigr)$, as in our theorem.
\item With $\eta\leq\frac{4}{L_F+\sqrt{L_F^2+12 G_0}}$, and letting $T=1/\epsilon$, $B=1+\sigma_0^2/\epsilon$, and $\tau=S=1$, we again obtain
\[
    \E[\|\mathcal{G}(\bar{x})\|^2] \leq 8\bigl((\Phi(x_0^1)-\Phi^*)\eta^{-1} + 6 \bigr)\epsilon,
\]
but the sample complexity is 
$T(B+2\tau S)=\cO\bigl(\sigma_0^2\epsilon^{-2}+\epsilon^{-1}\bigr)$, 
which is same as in \citet{GhadimiLan2013}.
For deterministic optimization with $\sigma_0=0$, 
this recovers the $\cO(\epsilon^{-1})$ complexity.
\end{itemize}

\subsection{Proof of Theorem~\ref{theorem:cvg-general-adaptive}}
\label{sec:proof-cvg-general-adaptive}

\begin{proof}
	Note that for this set of parameters, we still have the relationship that  $\tau_t = S_t$. Therefore, within each epoch, \eqref{thm:cvg-general-fixed-1} is still true with epoch specific $\tau_t$ and $B_t$. Summing this up gives
	\begin{equation}
	\label{thm:cvg-general-adptive-0.5}
	\frac{\eta}{8}\sum_{t=1}^T\sum_{i=0}^{\tau_t-1}\E[\|\cgt\|^2]  \leq \Phi(x_0^1) - \Phi^* + \sum_{t=1}^T\frac{3\sigma_0^2\eta}{4B_t}\tau_t.
	\end{equation}
	By the random selection rule of $\bar{x}$, we have 
	\begin{equation}
	\label{thm:cvg-general-adptive-1}
		\E[\|\mathcal{G}(\bar{x})\|^2] = \frac{1}{\sum_{t=1}^T\tau_t}\sum_{t=1}^T\sum_{i=0}^{\tau-1}\E[\|\cgt\|^2] \leq \frac{8(\Phi(x_0^1) - \Phi^*)}{\eta\sum_{t=1}^T\tau_t} + 6\sigma_0^2\cdot\frac{\sum_{t=1}^T\tau_t/B_t}{\sum_{t=1}^T\tau_t}.
	\end{equation}
    Note that $\tau_t = \lceil at+b\rceil$ and $B_t = \lceil \sigma_0^2 (at+b)^2\rceil.$ We have
	$$\sum_{t=1}^T\tau_t \geq \sum_{t=1}^T at+b = \frac{a}{2}T(T+1) + bT = \cO(T^2)$$
	and 
	$$\sigma_0^2\sum_{t=1}^T\tau_t/B_t\leq \sum_{t=1}^T \frac{1}{at+b}\leq \frac{1}{a+b} + \int_{1}^{T}\frac{dt}{at+b} = \frac{1}{a+b}+\frac{1}{a}\ln\left(\frac{aT+b}{a+b}\right) = \cO(\ln T).$$
	Substituting the above bounds into inequality \eqref{thm:cvg-general-adptive-1} gives \eqref{thm:cvg-general-adaptive}.	As a result, 
    the total sample complexity is
\[
    \sum_{t=1}^T \bigl( B_t + 2 \tau_t S_t \bigr) \leq \sum_{t=1}^T \left(\sigma_0^2 (at+b)^2 + 2(at+b)^2\right)  
    =  \cO(\sigma_0^2 T^3 + T^3) \,.
\] 
Setting $T=\tilde{\cO}(\epsilon^{-1/2})$ so that $\E[\|\mathcal{G}\|\bar{x}\|^2]\leq\epsilon$, we get sample complexity
$\tilde{\cO}(\sigma_0^2 \epsilon^{-3/2} + \epsilon^{-3/2})$.
\end{proof}

We can also choose a different set of parameters.    
With $\eta\leq \frac{4}{L_F+\sqrt{L_F^2+12 G_0}}$, and letting $B=1+\sigma_0^2(at+b)$, and $\tau=S=1$, we also have 
\[
    \E[\|\mathcal{G}(\bar{x})\|^2] \leq \frac{8(\Phi(x_0^1)-\Phi^*)}{\eta T} + \frac{6\ln T}{T} \,,
\]
but the sample complexity, by setting $T=\tilde{\cO}(\epsilon^{-1})$ so that the above bound is less than~$\epsilon$, is
\[
    \sum_{t=1}^T \bigl( B_t + 2 \tau_t S_t \bigr) \leq \sum_{t=1}^T \left(\sigma_0^2 (at+b) + 2\right)  
    =  \cO(\sigma_0^2 T^2 + T)
    =  \tilde{\cO}(\sigma_0^2 \epsilon^{-2} + \epsilon^{-1}) \,.
\]
This is more close to the classical results on stochastic optimization.

\section{Convergence analysis for composite finite-sum case}

In this section, we consider the composite finite-sum problem~\eqref{prob:main-finite} and prove Theorems~\ref{theorem:cvg-finite-fixed} and~\ref{theorem:cvg-finite-adaptive}.

In this case, the random variable $\xi$ uniformly takes value from the finite 
index set $\{1,...,n\}$. 
At the beginning of each epoch in Algorithm~\ref{alg:CIVR}, we can choose to 
estimate $g(x_0^t)^t$ and $g'(x_0^t)$ by their exact value rather than the approximate ones constructed by subsampling. 
Namely, in \eqref{defn:sarah-1} of Algorithm \ref{alg:CIVR},
we choose $\cB_t=\{1,\ldots,n\}$ for all~$t\geq 1$.
Therefore, 
\[
y_0^t= g(x_0^t) = \frac{1}{n}\sum_{j=1}^ng_j(x_0^t),
\qquad  z_0^t= g'(x_0^t) = \frac{1}{n}\sum_{j=1}^ng'_j(x_0^t) 
\]
and
\begin{equation}\label{eqn:zero-ini-variance}
    \E\left[\|y_0^t-g(x_0^t)\|^2\right] = 0\, , \qquad
    \E\left[\|z_0^t-g'(x_0^k)\|^2\right] = 0 \,.
\end{equation}
As a result, the initial variances in Lemma~\ref{lemma:sarah-mse} diminishes
and \eqref{lm:sarah-mse-2} reduces to 
\begin{equation}
\label{eqn:sarah-mse-finite-1}
\begin{cases}
    \displaystyle
\E[\|y_i^t-g(x_i^t)\|^2] \leq \sum_{r=1}^{i}\frac{\ell_g^2}{S_t}\E[\|x_r^t-x_{r-1}^t\|^2],\\
    \displaystyle
\E[\|z_i^t-g'(x_i^t)\|^2] \leq \sum_{r=1}^{i}\frac{L_g^2}{S_t}\E[\|x_r^t-x_{r-1}^t\|^2].
\end{cases}
\end{equation}
In addition, combining~\eqref{eqn:explicit-ini-variance} 
and~\eqref{eqn:zero-ini-variance}, we have
\begin{equation}
\label{eqn:mse-F'-finite}
\E[\|\tnf(x_i^t) - F'(x)\|^2] \leq \frac{G_0}{S_t}\sum_{r=1}^i\E[\|x_{r}^t - x_{r-1}^t\|^2] .
\end{equation}
Note that Lemma \ref{lemma:descent-general} is still true. 

\subsection{Proof of Theorem~\ref{theorem:cvg-finite-fixed}}
\begin{proof}
	The proof follows similar steps as those in the proof of
    Theorem \ref{theorem:cvg-general-fixed}. 
    So we only note down the significantly different steps here. 
	
    Specifically, following the proof of Theorem \ref{theorem:cvg-general-fixed} in Section~\ref{sec:proof-cvg-general-fixed}, by applying \eqref{eqn:sarah-mse-finite-1} instead of \eqref{lm:sarah-mse-2}, we get the following result instead of inequality \eqref{thm:cvg-general-fixed-1},
    \[
	\frac{\eta}{8}\sum_{i=0}^{\tau-1}\E[\|\cgt\|^2] \leq \E[\Phi(x_0^t)] - \E[\Phi(x_{\tau}^t)].
    \]
	Summing this up apply the random selection rule of $\bar{x}$ gives
    \[
	\E[\|\mathcal{G}(\bar{x})\|^2] = \frac{1}{\tau T}\sum_{t=1}^T\sum_{i=0}^{\tau-1}\E[\|\cgt\|^2] \leq \frac{8(\Phi(x_0^1) - \Phi^*)}{\tau T\eta} = \frac{8(\Phi(x_0^1) - \Phi^*)}{\sqrt{n}T\eta}.
    \]
	Therefore, we have to set $T = \cO(\frac{1}{\sqrt{n}\epsilon})$ to get an $\epsilon$-solution. Note that the sample complexity per epoch is $n + \tau_t S_t = 2n$, the total sample complexity will be 
	$\cO(n+\sqrt{n}\epsilon^{-1})$.
\end{proof}

\subsection{Proof of Theorem~\ref{theorem:cvg-finite-adaptive}}

\begin{proof}
	If $T\leq T_0$, then the result is exactly what we proved from Theorem \ref{theorem:cvg-general-adaptive}. Therefore, the first bound in \eqref{thm:cvg-finite-adaptive} is already guaranteed.
	
	If $T>T_0$, when $1\leq t \leq T_0$, then everything still runs identically to that described in Theorem \ref{theorem:cvg-general-adaptive}. Consequently, the following bound is effective
	\begin{equation}
	\label{thm:cvg-finite-adaptive-1}
	\frac{\eta}{8}\sum_{t=1}^{T_0}\sum_{i=0}^{\tau_t-1}\E[\|\cgt\|^2]  \leq \Phi(x_0^1) - \E[\Phi(x^{T_0+1}_{0})] + \sum_{t=1}^{T_0}\frac{3\sigma_0^2\eta}{4B_t}\tau_t.
	\end{equation} 
	When $T_0+1\leq t\leq T$, the following bound becomes effective, 
	\begin{equation*}
	\frac{\eta}{8}\sum_{t=T_0+1}^T\sum_{i=0}^{\tau-1}\E[\|\cgt\|^2] \leq \E[\Phi(x_0^{T_0+1})] - \Phi^*.
	\end{equation*}
	Therefore, we have 
	\begin{eqnarray*}
	\E[\|\mathcal{G}(\bar{x})\|^2] = \frac{1}{\sum_{t=1}^T\tau_t}\sum_{t=1}^T\sum_{i=0}^{\tau-1}\E[\|\cgt\|^2] \leq \frac{8(\Phi(x_0^1) - \Phi^*)}{\eta\sum_{t=1}^T\tau_t} + 6\sigma_0^2\cdot\frac{\sum_{t=1}^{T_0}\tau_t/B_t}{\sum_{t=1}^T\tau_t}.
	\end{eqnarray*}
    Note that 
    $$\sum_{t=1}^{T_0}\tau_t/B_t\leq \sum_{t=1}^{T_0} \frac{1}{at+b}\leq\frac{1}{a+b}+\frac{1}{a}\ln\left(\frac{aT_0+b}{a+b}\right) = \cO(\ln n),$$
    and
    $$\sum_{t=1}^T\tau_t \geq (T-T_0)\sqrt{n} + \sum_{t=1}^{T_0} (at+b)  = \sqrt{n}(T - T_0) + \frac{a}{2}T^2_0+(\frac{a}{2}+b)T_0 = \cO(\sqrt{n}(T-T_0+1)).$$
    With the above two bounds, we have proved the second result in \eqref{thm:cvg-finite-adaptive}. 
    
    For any $\epsilon>0$, if $\epsilon \geq \cO(1/T_0^2) = \cO(n^{-1}).$ In this case, the algorithm will spend most epochs in the adaptive phase, whose sample complexity is $\tilde{\cO}(\epsilon^{-3/2})$. if $\epsilon=o(n^{-1})$, we need $T>T_0$. By \eqref{thm:cvg-finite-adaptive}, we know
    $\sqrt{n}(T-T_0+1) = \tilde\cO(\epsilon^{-1})$, this means that the total sample complexity will be 
    $$\sum_{t=1}^T\bigl(B_t+2\tau_t S_t\bigr) \leq 3\sum_{t=1}^{T_0} (at+b+1)^2 +  3(T-T_0)n = \tilde\cO(n^{3/2} + \sqrt{n}\epsilon^{-1}) = \tilde{\cO}(\sqrt{n}\epsilon^{-1}).$$
    When $\epsilon\geq\cO(n^{-1})$, we have $\epsilon^{-3/2}\leq \sqrt{n}\epsilon^{-1}$. When $\epsilon = o(n^{-1})$, we have $\epsilon^{-3/2} > \sqrt{n}\epsilon^{-1}$. Combining the two cases together gives the sample complexity of $\tilde{\cO}(\min\{\sqrt{n}\epsilon^{-1},\epsilon^{-3/2}\})$.
\end{proof}

\section{Convergence analysis under gradient-dominant condition}

\subsection{Proof of Theorem~\ref{theorem:cvg-KL-general}}

\begin{proof}
	Note that in this case $\Phi(x) = F(x)$. By \eqref{thm:cvg-general-fixed-2} and \eqref{defn:Grd-domi}, we have 
    \[
	\E[F(\bar{x}) - F^*] \leq \nu\E[\|F'(\bar{x})\|^2] \leq  \frac{8\nu(F(x_0^1) - F^*)}{\tau T\eta} + \frac{6\nu\sigma_0^2}{B} 
    \]
	By the selection of 
    $T = \bigl\lceil\frac{16\nu\sqrt{\epsilon}}{\eta}\bigr\rceil$, 
    $\tau=1/\sqrt{\epsilon}$ and $B=1+12\nu\sigma_0^2/\epsilon$,
    we have 
\begin{equation}\label{eqn:F-gap-exp-eps}
    \E[F(\bar{x}) - F^*] \leq \frac{1}{2}(F(x_0^1) - F^*) + \frac{1}{2}\epsilon,
\end{equation}
    which is \eqref{thm:cvg-KL-general-0}.

	Suppose we periodically restart the Algorithm \ref{alg:CIVR} after every~$T$epochs, and set the outputs to be $\bar{x}^k$, where $k = 1,2,...$ denotes
    the number of restarts.
    We use the output of the $k$th period $\bar{x}^k$ as the initial point 
    to start the next period, which produces $\bar{x}^{k+1}$.
    As a result, the above inequality translates to 
    \[
        \E[F(\bar{x}^{k+1}) - F^*] \leq \frac{1}{2}(\E[F(\bar{x}^k)] - F^*) + \frac{1}{2}\epsilon.
    \] 
Equivalently, 
\[
    \E[F(\bar{x}^{k}) - F^*]- \epsilon \leq \frac{1}{2}\left(\E[F(\bar{x}^{k-1}) - F^*]- \epsilon\right),
\] 
    which leads to
    \[
        \E[F(\bar{x}^{k}) - F^*]\leq \frac{1}{2^k}\left(\E[F(\bar{x}^{0}) - F^*]- \epsilon\right)+ \epsilon.
    \]
    Therefore, the expected optimality gap converges linearly to a $\epsilon$-ball around 0.
\end{proof}

Next we discuss the sample complexity with different parameter settings.
\begin{itemize}
    \item If we choose $\tau=S=1/\sqrt{\epsilon}$, $B_t=12\nu\sigma_0^2/\epsilon$, and $T = \bigl\lceil\frac{16\nu\sqrt{\epsilon}}{\eta}\bigr\rceil$, 
        then the total sample complexity is
        \[
            T(B+2\tau S)\ln\frac{1}{\epsilon} = \frac{16\nu\sqrt{\epsilon}}{\eta}\left(\frac{12\nu\sigma_0^2}{\epsilon} + \frac{1}{\sqrt{\epsilon}}\frac{1}{\sqrt{\epsilon}}\right)\ln\frac{1}{\epsilon} 
            =\cO\left((\nu^2\sigma_0^2\epsilon^{-1/2} + \nu\epsilon^{-1/2})\ln\epsilon^{-1}\right)
        \]
        However, the above derivation needs to assume $\frac{16\nu\sqrt{\epsilon}}{\eta}\geq 1$ or at least $\cO(1)$, which means $\epsilon>(\eta/\nu)^2$. 
        If this condition is not satisfied, then we have $T=1$ and the complexity is 
        \[
            \cO\bigl((\nu\sigma_0^2\epsilon^{-1} + \epsilon^{-1})\ln\epsilon^{-1}\bigr).
        \]
        Notice that the second term does not depend on $\nu$ or the conditions number.
    \item If we choose $\tau=S=1$, $B_t=1+12\nu\sigma_0^2/\epsilon$, and $T = \bigl\lceil\frac{16\nu}{\eta}\bigr\rceil$, 
        the we also have
        \[
        \E[F(\bar{x}) - F^*] \leq \frac{1}{2}(F(x_0^1) - F^*) + \frac{1}{2}\epsilon,\]
        and the total sample complexity is
        \[
            T(B+2\tau S)\ln\frac{1}{\epsilon} = \frac{16\nu}{\eta}\left(\frac{12\nu\sigma_0^2}{\epsilon} + 2\right)\ln\frac{1}{\epsilon} 
            =\cO\left(\nu^2\sigma_0^2\epsilon^{-1/2} + \nu\right)\ln\epsilon^{-1}
        \]
        Defining the condition number $\kappa=L_F\nu=\cO(\nu/\eta)$, the above complexity becomes
        \[
            T(B+2\tau S)\ln\frac{1}{\epsilon} 
            =\cO\left(\kappa^2\sigma_0^2\epsilon^{-1} + \kappa\right)\ln\epsilon^{-1}
        \]
        Thus when $\sigma=0$, we have $\cO\bigl(\kappa\ln\epsilon^{-1}\bigr)$ for deterministic optimization.
\end{itemize}

\subsection{Proof of Theorem~\ref{theorem:cvg-KL-finite}}
The proof is very similar to the previous one. 
It actually becomes simpler by noticing that in the finite-sum case, 
the terms involving $\sigma_0^2$ disappear:
\[
	\E[F(\bar{x}) - F^*] \leq \nu\E[\|F'(\bar{x})\|^2] \leq  \frac{8\nu(F(x_0^1) - F^*)}{\tau T\eta}.
\]
By choosing 
$T = \bigl\lceil\frac{16\nu}{\eta\sqrt{n}}\bigr\rceil$, 
    $\tau=S=\sqrt{n}$.
we again obtain~\eqref{eqn:F-gap-exp-eps}.
In this case, we have $B=n$ and
\[
    T(B+2\tau S)\epsilon^{-1}
    = \left\lceil\frac{16\nu}{\eta\sqrt{n}}\right\rceil \left(n+2\sqrt{n}\sqrt{n}\right)\ln\epsilon^{-1}
    =\cO\left(n+\nu\sqrt{n}\right)\ln\epsilon^{-1}.
\]

\section{Convergence analysis under optimally strong convexity}

In order to prove Theorems~\ref{theorem:cvg-O-S-CVX-general}
and~\ref{theorem:cvg-O-S-CVX-finite}, 
we first state Lemma~3 in \cite{xiaozhang2014proxsvrg} 
in our notations.

\begin{lemma}[Lemma 3 in \cite{xiaozhang2014proxsvrg}]
	\label{lemma:L.Xiao-T.Zhang}
	Let $\Phi(x) = F(x) + r(x)$, where $F'(x)$ is $L_F$-Lipschitz continuous, and $F(x)$ and $r(x)$ are convex. For any $x\in \mathrm{dom}(r)$, and any $v\in \R^{d}$, define
	$$x^+ := \mathrm{Prox}_{\eta r(\cdot)}(x-\eta v),\mbox{    } \mathcal{G}:=\frac{1}{\eta}(x-x^+), \mbox{ and } \Delta: = v-F'(x),$$
	where $\eta$ is a step size satisfying $0<\eta\leq 1/L_F$. Then for any $y\in\R^{d}$, 
	$$\Phi(y)\geq \Phi(x^+) + \mathcal{G}^T(y-x) + \frac{\eta}{2}\|\mathcal{G}\|^2+ \Delta^T(x^+-y).$$
\end{lemma}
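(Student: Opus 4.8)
The plan is to assemble the inequality from three standard ingredients used together: the first-order optimality condition of the proximal subproblem defining $x^+$, the descent lemma for the $L_F$-smooth function $F$, and the convexity of both $F$ and $r$.

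First I would extract the optimality condition. Since $x^+$ is the minimizer of $r(u) + \frac{1}{2\eta}\|u-(x-\eta v)\|^2$ over $u$, the subdifferential inclusion $0\in\partial r(x^+) + \frac{1}{\eta}(x^+-x) + v$ holds, which rearranges to $\mathcal{G}-v\in\partial r(x^+)$ (recalling $\mathcal{G}=\frac{1}{\eta}(x-x^+)$). Convexity of $r$ then gives, for every $y$, $r(y)\geq r(x^+) + \langle\mathcal{G}-v,\,y-x^+\rangle$. Separately, the descent lemma applied along the step $x^+-x=-\eta\mathcal{G}$ gives $F(x^+)\leq F(x) - \eta\langle F'(x),\mathcal{G}\rangle + \frac{L_F\eta^2}{2}\|\mathcal{G}\|^2$, and convexity of $F$ gives $F(x)\leq F(y) - \langle F'(x),y-x\rangle$. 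Adding these three estimates produces an upper bound on $\Phi(x^+)=F(x^+)+r(x^+)$ of the form $\Phi(y)$ plus a collection of inner-product and quadratic terms.

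The rest is algebraic simplification. Substituting $v=F'(x)+\Delta$ and $y-x^+=(y-x)+\eta\mathcal{G}$ into the inner products, every term carrying $F'(x)$ cancels, the $\Delta$-contributions collapse to the single term $\langle\Delta,\,y-x^+\rangle$ (which becomes $\Delta^T(x^+-y)$ after moving things to the other side), and the quadratic terms combine into $\bigl(\frac{L_F\eta^2}{2}-\eta\bigr)\|\mathcal{G}\|^2$. Using $0<\eta\leq 1/L_F$, so that $\frac{L_F\eta}{2}\leq\frac{1}{2}$, this coefficient is at most $-\frac{\eta}{2}$; rearranging then yields exactly the claimed bound.

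The argument is essentially bookkeeping, so there is no deep obstacle; the step that actually uses the hypothesis (as opposed to an identity) is the final absorption of $\frac{L_F\eta^2}{2}\|\mathcal{G}\|^2$ into $-\frac{\eta}{2}\|\mathcal{G}\|^2$ via $\eta\leq 1/L_F$. The one place demanding genuine care is sign tracking in the inner products — ensuring the cancellation of the $F'(x)$ terms is complete and that the surviving residual is $\Delta^T(x^+-y)$ rather than its negative — since a sign slip there would leave a spurious first-order term in the estimate.
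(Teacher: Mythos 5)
Your proof is correct: the subgradient characterization $\mathcal{G}-v\in\partial r(x^+)$, the descent lemma along $x^+-x=-\eta\mathcal{G}$, convexity of $F$ and $r$, and the absorption of $\frac{L_F\eta^2}{2}\|\mathcal{G}\|^2$ via $\eta\leq 1/L_F$ combine exactly as you describe, and the $\Delta$-residual does come out with the sign $\Delta^T(x^+-y)$. The paper itself does not reprove this lemma but imports it from \citet{xiaozhang2014proxsvrg}, whose original proof proceeds by essentially the same argument, so your proposal matches the standard route.
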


\subsection{Proof of Theorem~\ref{theorem:cvg-O-S-CVX-general}}
\begin{proof}
If we set $x = x_i^t$, $y = x^*$, $v = \tnf(x_i^t)$, $x^+ = x_{i+1}^t$ and $\mathcal{G} = \tcgt$, we get the following useful inequality,
\[
\langle\tcgt,x^*-x_i^t\rangle \leq \Phi(x^*) - \Phi(x_{i+1}^t) - \frac{\eta}{2}\|\tcgt\|^2 - \langle F'(x_i^t) - \tnf(x_i^t),x^*-x_{i+1}^t\rangle.
\]
As a result we have the following inequality,
\begin{eqnarray}
& &     \|x_{i+1}^t-x^*\|^2 \nonumber \\
& = & \|x_i^t - x^*\|^2 + \eta^2\|\tcgt\|^2 +2\eta\langle \tcgt,x^*-x_i^t \rangle \nonumber\\
& \leq & \|x_i^t-x^*\|^2 + \eta^2\|\tcgt\|^2 - 2\eta(\Phi(x_{i+1}^t) - \Phi(x^*)) - \eta^2\|\tcgt\|^2 \nonumber \\
& & - 2\eta\langle F'(x_i^t) - \tnf(x_i^t),x^*-x_{i+1}^t\rangle\nonumber\\
& \leq & \|x_i^t-x^*\|^2 - 2\eta(\Phi(x_{i+1}^t) - \Phi(x^*)) + \frac{2\eta}{\mu}\|F'(x_i^t) - \tnf(x_i^t)\|^2 + \frac{\eta\mu}{2}\|x_{i+1}^t - x^*\|^2\nonumber\\
& \leq & \|x_i^t-x^*\|^2 - \eta(\Phi(x_{i+1}^t) - \Phi(x^*)) + \frac{2\eta}{\mu}\|F'(x_i^t) - \tnf(x_i^t)\|^2.
\label{eqn:C-SAGA-result}
\end{eqnarray}
Note that the inequality \eqref{eqn:C-SAGA-result} is originally obtained in 
\cite{ZhangXiao2019C-SAGA}.  
Adding $2\mu\cdot$\eqref{eqn:C-SAGA-result} to \eqref{lm:descent-general-1}, we get
\begin{eqnarray}
2\mu\eta\E[\Phi(x_{i+1}^t)-\Phi^*] & \leq & \E[\Phi(x_{i}^t) + 2\mu\|x_{i}^t - x^*\|^2] - \E[\Phi(x_{i+1}^t) + 2\mu\|x_{i+1}^t - x^*\|^2] \nonumber \\
& & -(\frac{1}{2\eta} - \frac{L_F}{2})\E[\|x_{i+1}^t-x_i^t\|^2] + \frac{9}{2}\eta\E[\|\tnf(x_i^t) - F'(x_i^t)\|^2]. 
\label{eqn:cvg-O-S-CVX-1}
\end{eqnarray}
By \eqref{eqn:cvg-O-S-CVX-1} and \eqref{eqn:mse-F'}, we have 
	\begin{eqnarray*}
	2\mu\eta\sum_{i=0}^{\tau_t-1} \E[\Phi(x_{i+1}^t) - \Phi^*] &\leq& \E[\Phi(x_{\tau_t}^t)  + 2\mu\|x_{\tau_t}^t-x^*\|^2] - \E[\Phi(x_{0}^t)  + 2\mu\|x_{0}^t-x^*\|^2] \nonumber\\
	& & - (\frac{1}{2\eta} - \frac{L_F}{2} - \tau_t\frac{9G_0\eta}{2S_t} )\sum_{r=1}^{\tau_t}\E[\|x_{r}^t-x_{r-1}^t\|^2] + \tau_t\frac{9\sigma_0^2\eta}{2B_t}.\nonumber
	\end{eqnarray*}
	According to the selection of $\tau_t,S_t,B_t$ and $\eta$, we know that the coefficient $(\frac{1}{2\eta} - \frac{L_F}{2} - \tau_t\frac{9G_0\eta}{2S_t} )\geq0$. Consequently, 
	\begin{eqnarray*}
	2\mu\eta\sum_{i=0}^{\tau_t-1} \E[\Phi(x_{i+1}^t) - \Phi^*] &\leq& \E[\Phi(x_{\tau_t}^t)  + 2\mu\|x_{\tau_t}^t-x^*\|^2] - \E[\Phi(x_{0}^t)  + 2\mu\|x_{0}^t-x^*\|^2] + \tau_t\frac{9\sigma_0^2\eta}{2B_t}.\nonumber
	\end{eqnarray*}
	Summing this up and apply the random selection rule of $\bar{x}$ gives 
	\begin{eqnarray*}
	\E[\Phi(\bar{x})-\Phi^*] &\leq& \frac{1}{2\mu\eta\tau T}\E[\Phi(x_0^1) - \Phi^* + 2\mu\|x_0^1-x^*\|^2] + \frac{9\sigma_0^2}{4\mu B_t}\nonumber\\
	 & \leq &\frac{5}{2\mu\eta\tau T}\E[\Phi(x_0^1) - \Phi^*] + \frac{9\sigma_0^2}{4\mu B_t} \,. \nonumber
	\end{eqnarray*}
    If we choose $T=\lceil \frac{5\sqrt{\epsilon}}{\mu\eta} \rceil$, $\tau=S=\frac{1}{\sqrt{\epsilon}}$ and $B_t=1+\frac{9\sigma_0^2}{2\mu\epsilon}$, then
$\frac{5}{2\mu\eta\tau T}\leq \half$ and we obtain
    \[
	\E[\Phi(\bar{x})-\Phi^*] 
    ~\leq~ \frac{1}{2}\E[\Phi(x_0^1) - \Phi^*] + \frac{1}{2}\epsilon \,. 
\]
This proves the inequality \eqref{thm:cvg-O-S-CVX-general-0}. The rest of the proof will mimic that of Theorem \ref{theorem:cvg-KL-general}.	
\end{proof}

Discussions on sample complexity:
\begin{itemize}
    \item If we choose $\tau=S=1/\sqrt{\epsilon}$, $B_t=1+\frac{9\sigma_0^2}{2\mu\epsilon}$, and $T=\lceil \frac{5\sqrt{\epsilon}}{\mu\eta} \rceil$, 
        then the sample complexity is
        \[
            T(B+2\tau S)\ln\frac{1}{\epsilon} = \frac{5\sqrt{\epsilon}}{\mu\eta}\left(\frac{9\sigma_0^2}{2\mu\epsilon} + \frac{1}{\sqrt{\epsilon}}\frac{1}{\sqrt{\epsilon}}\right)\ln\frac{1}{\epsilon} 
            =\cO\left((\mu^{-2}\sigma_0^2\epsilon^{-1/2} + \mu^{-1}\epsilon^{-1/2})\ln\epsilon^{-1}\right) \,.
        \]
        The above derivation needs to assume $\frac{5\sqrt{\epsilon}}{\mu\eta}\geq 1$ or at least $\cO(1)$, which means $\epsilon>(\eta\mu)^2$. 
        If this condition is not satisfied, then we have $T=1$ and the complexity is 
        \[
            \cO\bigl((\mu^{-1}\sigma_0^2\epsilon^{-1} + \epsilon^{-1})\ln\epsilon^{-1}\bigr).
        \]
    \item If we choose $\tau=S=1$, $B_t=1+\frac{9\sigma_0^2}{\mu\epsilon}$, and $T = \bigl\lceil\frac{5}{\mu\eta}\bigr\rceil$, 
        the we also have
        \[
        \E[F(\bar{x}) - F^*] \leq \frac{1}{2}(F(x_0^1) - F^*) + \frac{1}{2}\epsilon,\]
        and the total sample complexity is
        \[
            T(B+2\tau S)\ln\frac{1}{\epsilon} = \frac{5}{\mu\eta}\left(\frac{9\sigma_0^2}{\mu\epsilon} + 2\right)\ln\frac{1}{\epsilon} 
            =\cO\left(\mu^{-2}\sigma_0^2\epsilon^{-1} + \mu^{-1}\right)\ln\epsilon^{-1}
        \]
        Defining the condition number $\kappa=L_F\nu=\cO(1/(\mu\eta))$, the above complexity becomes
        \[
            T(B+2\tau S)\ln\frac{1}{\epsilon} 
            =\cO\left(\kappa^2\sigma_0^2\epsilon^{-1} + \kappa\right)\ln\epsilon^{-1}
        \]
        Thus when $\sigma=0$, we have $\cO\bigl(\kappa\ln\epsilon^{-1}\bigr)$ for deterministic optimization.
\end{itemize}

\subsection{Proof of Theorem~\ref{theorem:cvg-O-S-CVX-finite}}
The proof is very similar to the previous one. 
It actually becomes simpler by noticing that in the finite-sum case, 
the terms involving $\sigma_0^2$ disappear.

\clearpage

\section{Numerical experiments on policy evaluation for MDP}

\begin{figure*}[t]
	\centering 
	\includegraphics[width=0.345\linewidth]{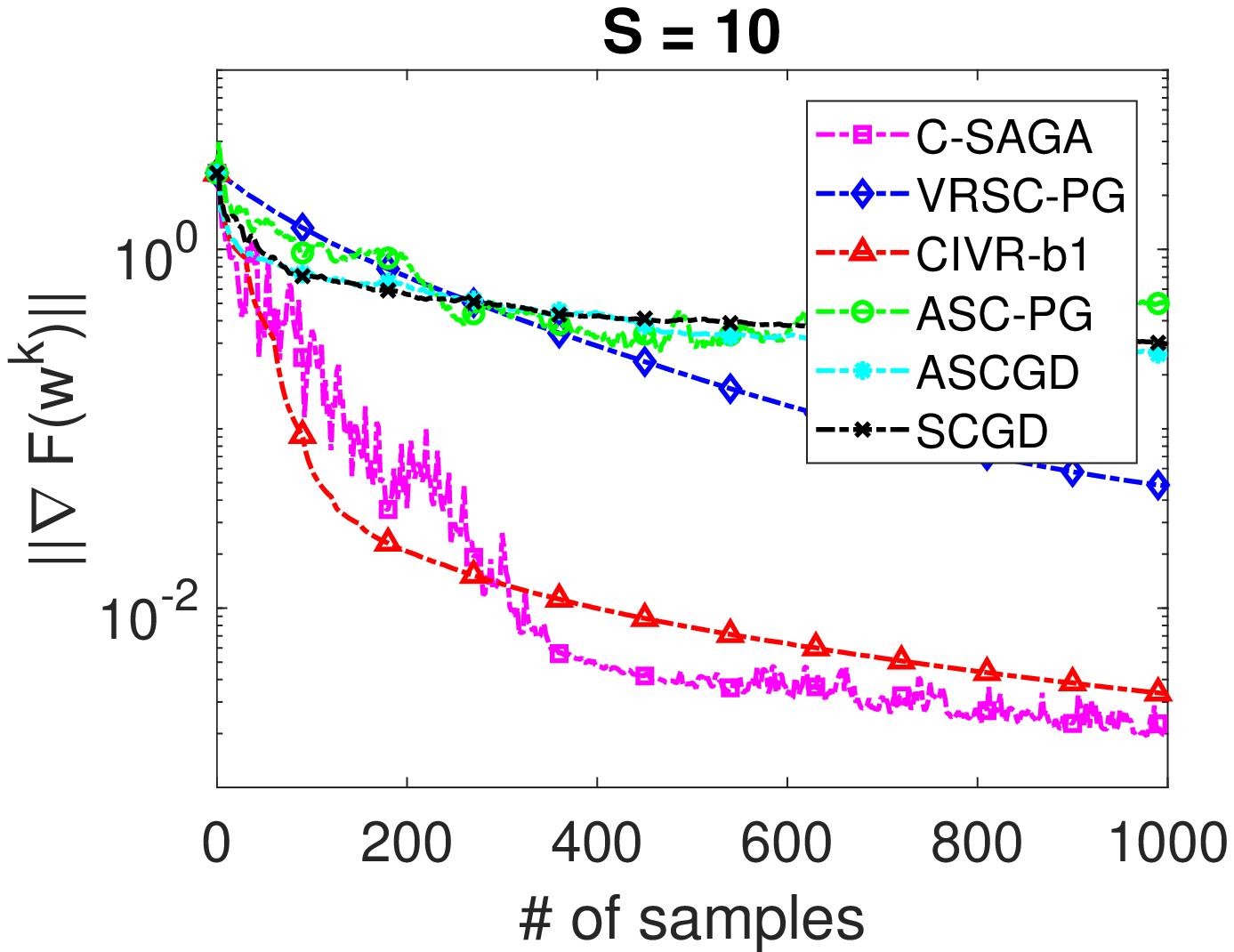}
	\hspace{-0.03\linewidth}  
	\includegraphics[width=0.345\linewidth]{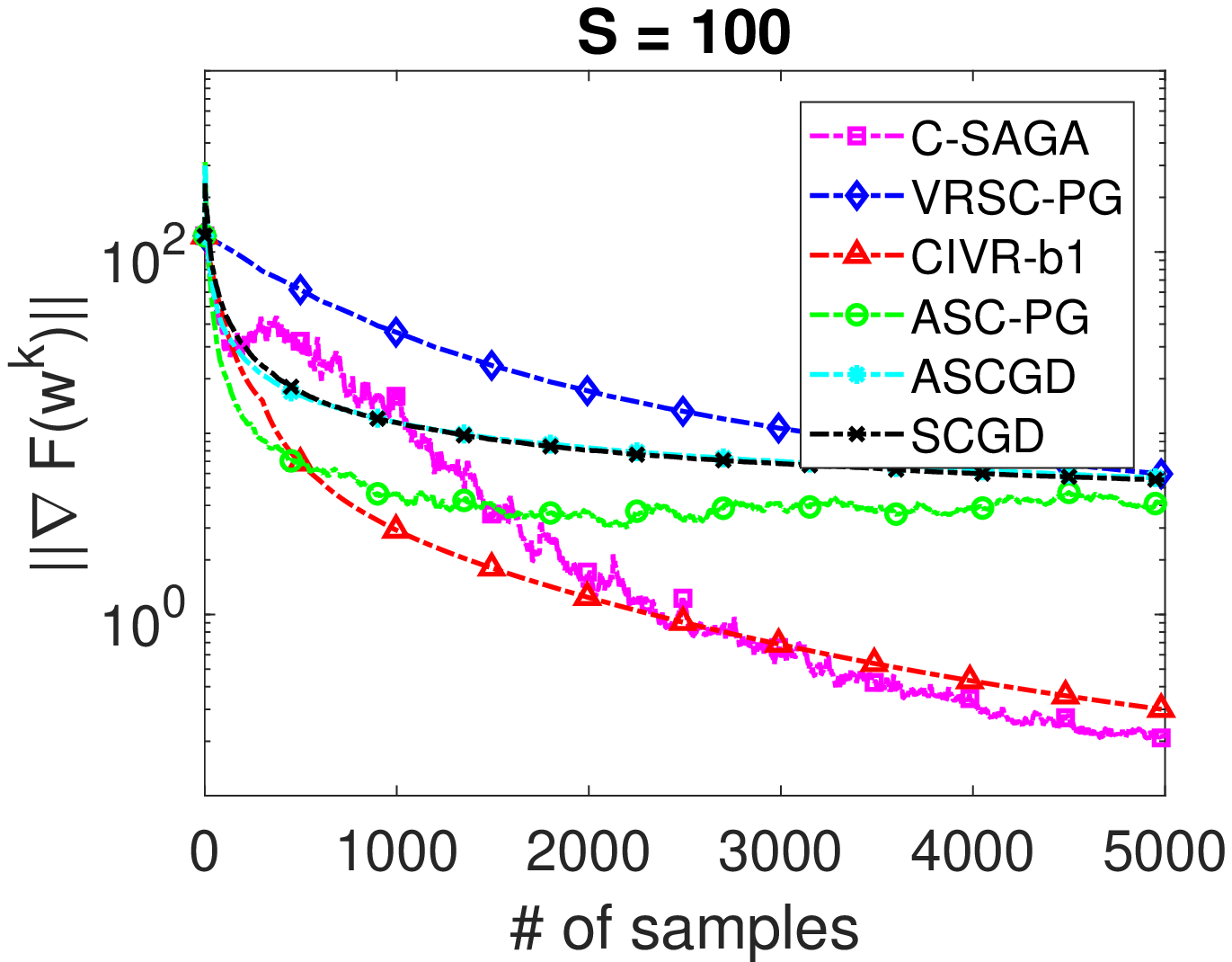}
	\hspace{-0.03\linewidth}  
	\includegraphics[width=0.345\linewidth]{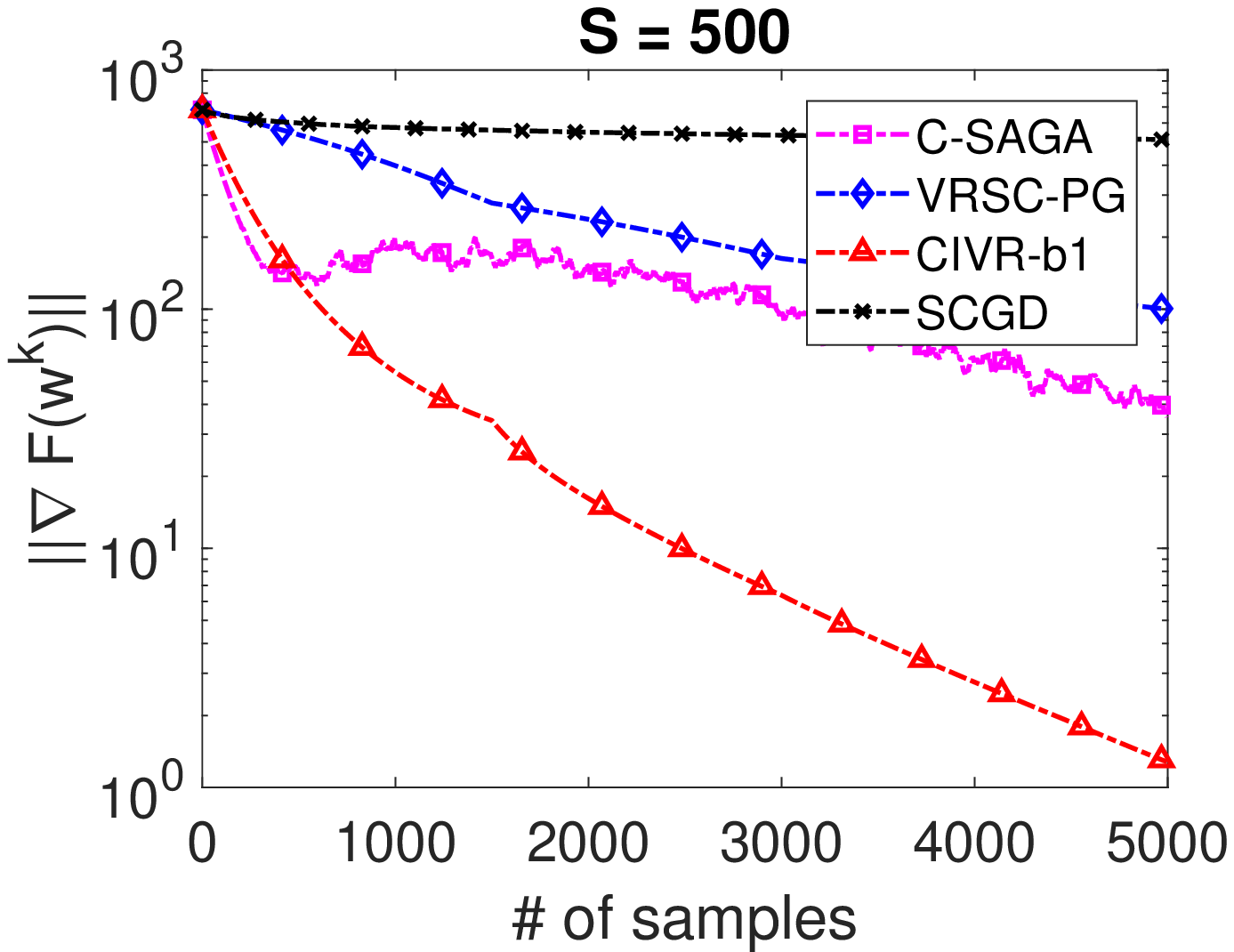}
	\vfill  
	\includegraphics[width=0.345\linewidth]{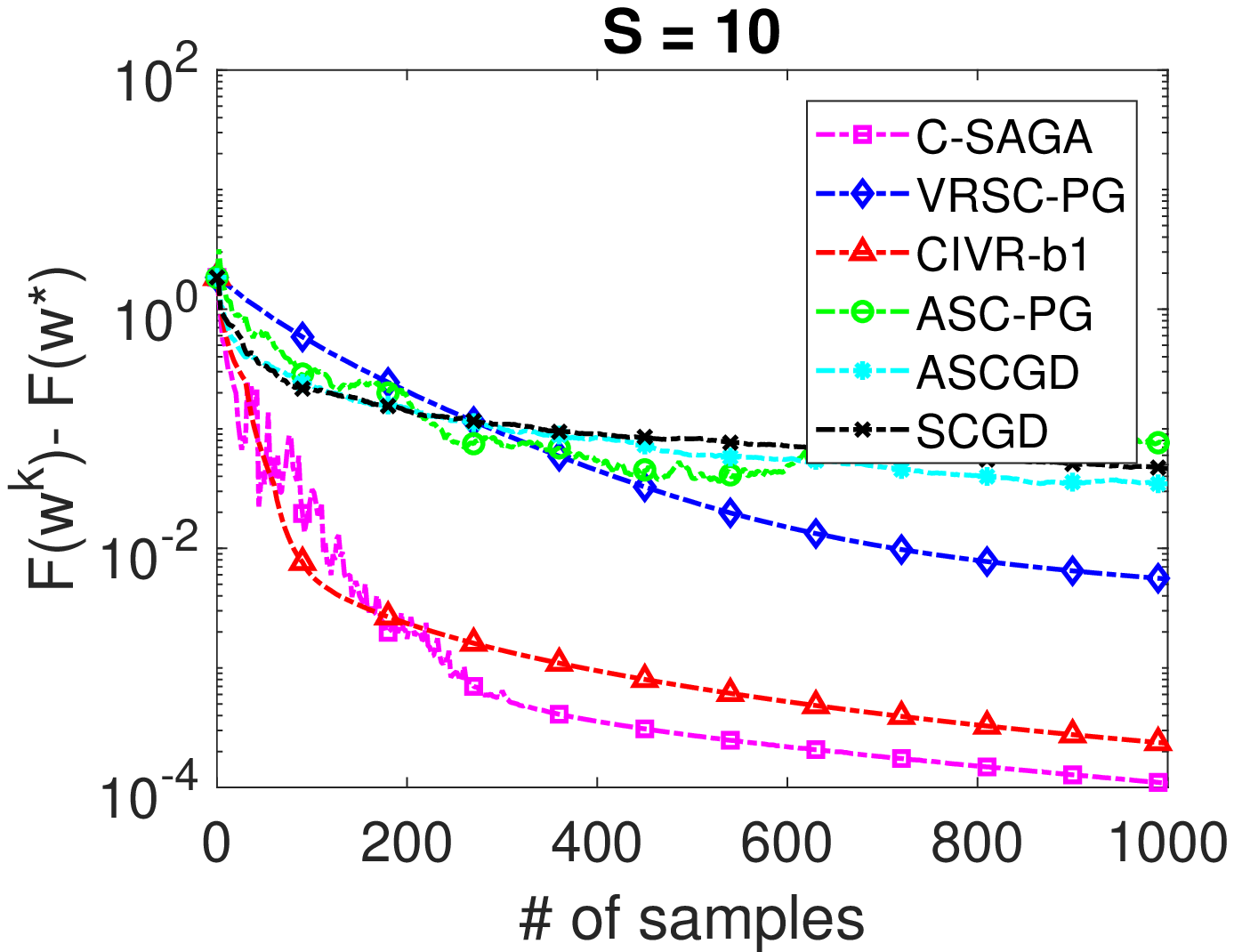}
	\hspace{-0.03\linewidth}  
	\includegraphics[width=0.345\linewidth]{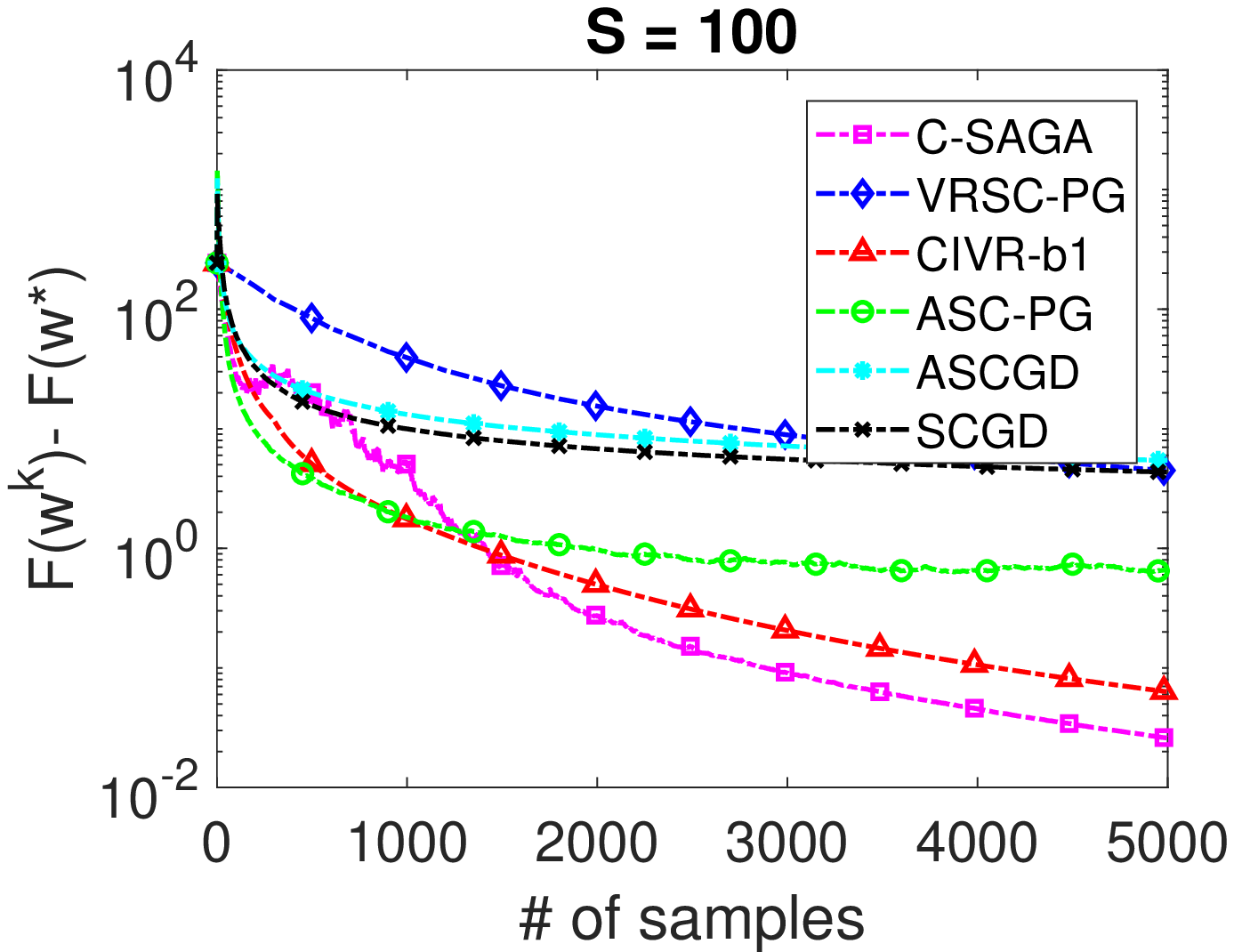}
	\hspace{-0.03\linewidth}  
	\includegraphics[width=0.345\linewidth]{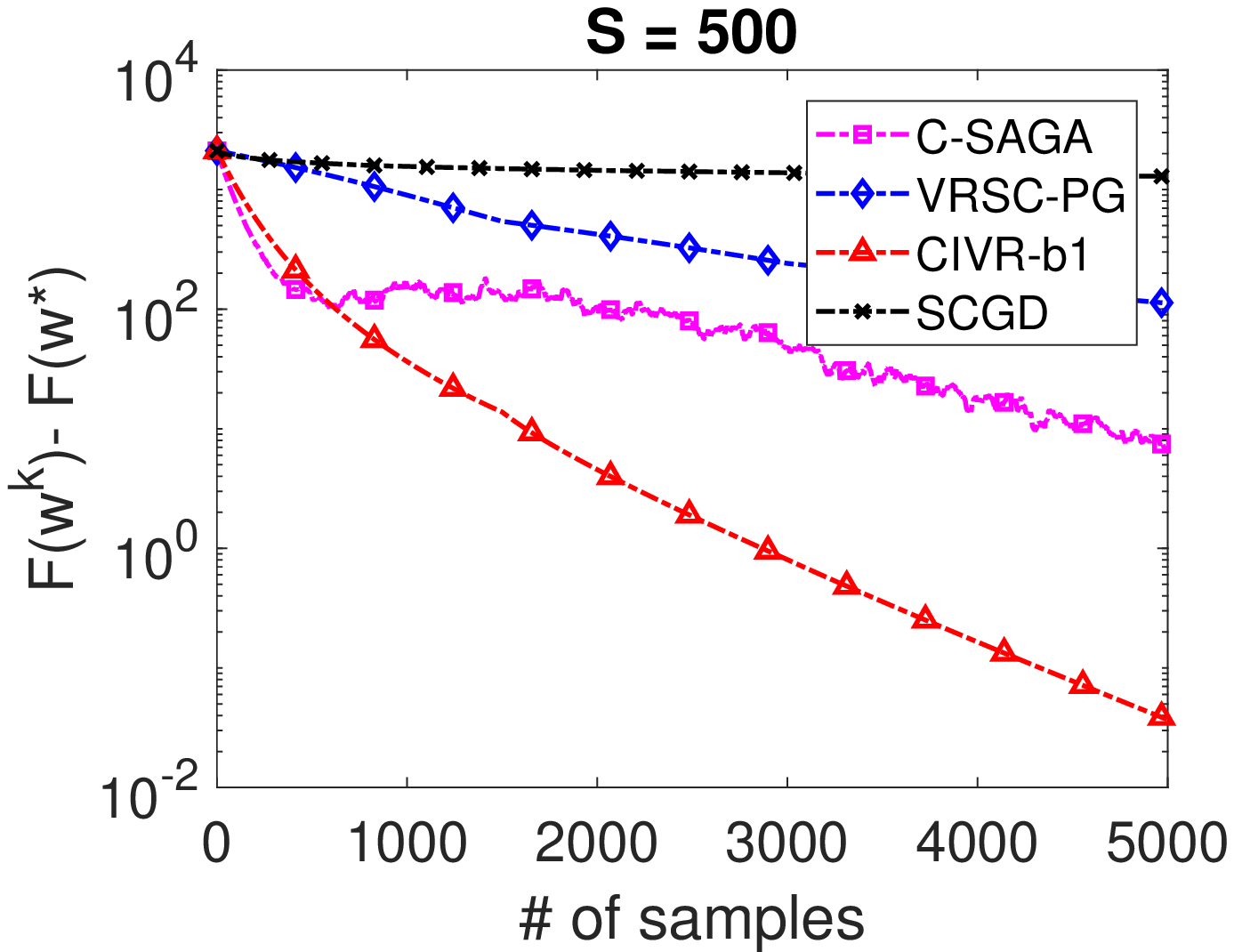}
	\caption{Experiments on policy evaluation  for MDP for 
		cases with $S=10$,  $S=100$ and  $S=500$.}
	\label{fig:MDP}
\end{figure*}

Here we provide additional numerical experiments on the policy
evaluation problem for MDP.

Let $\cS = \{1,...,S\}$ be the state space of some Markov decision process. Suppose a reward of $R_{i,j}$ is received after transitioning
from state~$i$ to state~$j$. 
Let $P^{\pi}\in\R^{S\times S}$ be the transition probability matrix under some fixed policy~$\pi$. Then the evaluation of the value function $V^\pi:\cS\to\R$ under such policy is equivalent to solving the following Bellman equation:
$$V^\pi(i) =  \sum_{j=1}^SP^\pi_{i,j}(R_{i,j} + \gamma V^\pi(j)) = \E_{j|i}[R_{i,j} + \gamma V^\pi(j)].$$
Following the suggestion of \cite{dann2014policy, ASC-PG-M.Wang}, we apply the linear function approximation 
$V^\pi(i) \approx \langle\Psi_i,w^*\rangle$ for a given set of feature vectors $\Psi_i$. 
and would like to compute the optimal vector $w^*$. 
This can be formulated as the following problem 
\bee
\label{prob:Bellman}
\minimize_{w} ~F(w) \triangleq \sum_{i=1}^S  \bigg(\langle\Psi_i,w\rangle - \sum_{j=1}^SP^\pi_{i,j}(R_{i,j} + \gamma \langle\Psi_j,w\rangle)\bigg)^2.
\eee
Let's denote 
$$q^\pi_i(w) \triangleq \sum_{j=1}^SP^\pi_{i,j}(R_{i,j} + \gamma \langle\Psi_j,w\rangle) = \E_{j|i}[R_{i,j} + \gamma \langle\Psi_j,w\rangle].$$
Then by defining 
\[
g(w) = \left[\langle\Psi_1,w\rangle,...,\langle\Psi_S,w\rangle,q^\pi_i(w),...,q^\pi_S(w)\right]^T
\]
and 
\[
f(y_1,...,y_S, z_1,...,z_S) 
= \|y-z\|^2
= \sum_{i=1}^S(y_i-z_i)^2,
\]
the Least squares problem is transformed into the form of~\eqref{prob:main-finite}. 

For this problem, we test the  SCGD \cite{SCGD-M.Wang}, the ASCGD
\cite{SCGD-M.Wang}, the ASC-PG \cite{ASC-PG-M.Wang}, the VRSC-PG
\cite{VRSC-PG}, C-SAGA \cite{ZhangXiao2019C-SAGA} and our CIVR algorithms.
In Section~\ref{sec:experiments}, we already tested the algorithms under their
standard batch sizes, e.g. $\lceil n^{2/3}\rceil$ and $\lceil\sqrt{n}\rceil$.
However, small constant batch sizes are often preferred in practice. Therefore,
we would like to set the batch size to $s=1$ for all algorithms. For this
special case, we denote the CIVR as the CIVR-b1. To balance the sample
complexity between the initial full batch sampling and the later subsampling 
with $s=1$, we set the epoch length for VRSC-PG and CIVR-b1 to be $S$. 

Note that the last $S$ components of $g$ are all independent expectations, 
therefore the variance reduction technique of VRSC-PG \cite{VRSC-PG}, 
C-SAGA \cite{ZhangXiao2019C-SAGA} and CIVR-b1 applied to each of these 
components.
In the experiments, $P^\pi$, $\Phi$ and $R^\pi$ are generated randomly. 

Similar to the experiments performed in Section~\ref{sec:experiments}, 
the step sizes are chosen from $\{0.1, 0.05,
0.01, 0.005, 0.001, 0.0005, 0.0001\}$ by experiments for VRSC-PG, C-SAGA as
well as for CIVR-b1. For $S=10$, $\eta = 0.1$ works best for both C-SAGA and
CIVR-b1, while $\eta = 0.01$ works best for VRSC-PG;  For $S=100$, $\eta =
0.001$ works best for both C-SAGA and  CIVR-b1, while $\eta = 0.0001$ works
best for VRSC-PG. For $S=500$, $\eta = 0.0001$ works best for all three of
them. 

When $S = 10$ and $S=100$,  we choose $\alpha_k = 0.01k^{-3/4}$ and $\beta_k =
0.1k^{-1/2}$ for SCGD,  $\alpha_k = 0.01k^{-5/7}$ and $\beta_k = 0.1k^{-4/7}$
for ASCGD and   $\alpha_k = 0.01k^{-1/2}$ and $\beta_k = 0.1k^{-1}$ for ASC-PG.
When $S = 500$, we choose $\alpha_k = 0.0001k^{-3/4}$ and $\beta_k =
0.001k^{-1/2}$ for SCGD while ASCGD and ASC-PG fail to converge under various
trials of parameters. The meaning of these step size parameters can be found in
\cite{ASC-PG-M.Wang} and \cite{SCGD-M.Wang}. 

Figure~\ref{fig:MDP} shows three experiments with sizes $S = 10$, $S = 100$ and $S = 500$ respectively. 
We can see that both C-SAGA and CIVR-b1 preform much better than other 
algorithms in our setting.
CIVR-b1 has more smooth and stable trajectory than C-SAGA.


\end{document}